\numberwithin{equation}{section}
\newtheorem{thm}{Theorem}[section]
\newtheorem{question}[thm]{Question}
\newtheorem{assumption}[thm]{Assumption}
\newtheorem{prop}[thm]{Proposition}
\theoremstyle{remark}
\newtheorem*{theorem*}{Theorem}
\theoremstyle{plain}
    \newtheorem{theorem}{Theorem}[section]
    \newtheorem{lemma}[theorem]{Lemma}
    \newtheorem{proposition}[theorem]{Proposition}
    \newtheorem{corollary}[theorem]{Corollary}
       \newtheorem{remark}[theorem]{Remark}
\newtheorem*{remark*}{Remark}
\newcommand\numberthis{\addtocounter{equation}{1}\tag{\theequation}}
\newcommand{\half}{\frac{1}{2}}
\newcommand{\utx}{u_{(t,x)}}
\newcommand{\rd}{\mathrm{d}}
\newcommand{\RR}{\mathbb R}
\newcommand{\jhalf}{{j+1/2}}
\newcommand{\revO}[1]{\textcolor{black}{{#1}}}
\title{The Evolution of Pointwise Statistics in Hyperbolic Equations with Random Data}
\author{Alina Chertock}
\address{Department of Mathematics, North Carolina State University, Raleigh, NC 27695, USA}
\email{chertock@math.ncsu.edu}
\author{Pierre Degond}
\address{Institut de Math{\'e}matiques de Toulouse, 
CNRS \& Universit{\'e} Paul Sabatier,
31062 TOULOUSE Cedex 9, France}
\email{pierre.degond@gmail.com}
\author{Amir Sagiv}
\address{Department of Mathematical Sciences, New Jersey Institute of Technology, Newark, NJ 07102, USA}
\email{amir.sagiv@njit.edu}
\author{Li Wang}
\address{School of Mathematics, University of Minnesota, Minneapolis, MN 55455, USA}
\email{liwang@umn.edu}
\begin{document}
\maketitle
\begin{abstract}
We consider one-dimensional hyperbolic PDEs, linear and nonlinear, with random initial data. Our focus is the {\em pointwise statistics,} i.e., the 
probability measure of the solution at any fixed point in space and time. For linear hyperbolic equations, the probability density function 
(PDF) of these statistics satisfies the same linear PDE. For nonlinear hyperbolic PDEs, we derive a linear transport equation for the 
cumulative distribution function (CDF) and a nonlocal linear PDE for the PDF. Both results are valid only as long as no shocks have formed, 
a limitation which is inherent to the problem, as demonstrated by a counterexample. For systems of linear hyperbolic equations, we introduce 
the multi-point statistics and derive their evolution equations. In all of the settings we consider, the resulting PDEs for the statistics 
are of practical significance: they enable efficient evaluation of the random dynamics, without requiring an ensemble of solutions of the 
underlying PDE, and their cost is not affected by the dimension of the random parameter space. Additionally, the evolution equations for the 
statistics lead to a priori statistical error bounds for Monte Carlo methods (in particular, Kernel Density Estimators) when applied to hyperbolic PDEs 
with random data.
\end{abstract}

\section{Introduction}
In mathematical modeling, the inputs of an otherwise deterministic model are often random or uncertain.  It is then natural to study the 
{\em quantity of interest} (model output) as a random variable. This task, known as uncertainty propagation (or a forward problem in 
uncertainty quantification), is prevalent in many scientific and engineering fields.  In particular, such approaches have been
successfully applied to hyperbolic partial differential equations (PDEs) and conservation laws, see, e.g., \cite{abgrall2017uncertainty, jin2017uncertainty, mishra2012sparse, mishra2013multi, muller2013multilevel, peherstorfer2020model, pettersson2009numerical, poette2009uncertainty, tryoen2010intrusive}. Such hyperbolic equations, in turn, are fundamental models in such diverse fields as gas dynamics, shallow water 
equations, magneto-hydrodynamics in plasma physics, car and pedestrian traffic, and nonlinear elasticity.

Common computational approaches to uncertainty propagation utilize an ensemble of solutions to the underlying model, thereby approximating 
the statistical properties of the quantity of interest (see Sec.\ \ref{sec:motivation}). Moreover, even if the quantity of interest is a 
property of the solution of a PDE at some point in time, such methods will usually disregard the {\em dynamics} of that quantity in time. 

{\em In this work, we study the dynamics of measures associated with the solutions of (deterministic) hyperbolic PDEs with random initial 
data.} Concretely, consider  a family of one-dimensional hyperbolic equations with random initial conditions,
\begin{equation}\label{eq:genSet}
\begin{cases}
\partial_t u(t,x;\omega )= c(x)\partial_x \left[a(u)\right]  \, ,
\\ u(0,x,\omega) = u_{(0,x)}(\omega)  \, .
\end{cases}
\end{equation} Here  $u:[0,T]\times \RR_x \times \Omega \to \RR^n $ for some $T>0$, the parameter space $\Omega \subseteq \RR^d$ is equipped with a Borel probability measure $\mu$, for every $x\in \RR$ the initial data $u_{(0,x)}:\Omega \to \RR ^n$ is a measurable function, $c:\RR_x \to \RR^n$, and $a(u)$ is either the identity matrix or a strictly convex function (see respective sections for details).
We study the {\em pointwise statistics $\phi_{t,x}$}: a measured-valued function (a Young Measure \cite{balder1995lectures, pedregal1997parametrized}), which for every fixed $x\in \RR$ and $t\geq 0$ is defined as
\begin{subequations}\label{eq:phiDef}
\begin{equation}
\phi_{t,x} \equiv u(t,x;\cdot )_{\sharp} \mu \, ,
\end{equation}
where we recall that the {\em pushforward probability measure}  of $\mu$ by the solution operator $u(t,x;\cdot)$ satisfies
\begin{equation} \phi _{t,x}(A) = \mu \left(\left\{ \omega \in \Omega ~~ | ~~ u(t,x;\omega)\in A  \right\}\right) \, ,
\end{equation}
\end{subequations} for all Borel measurable sets $A \subseteq \RR^n$.\footnote{In the literature $\phi_{t,x}$ is also known as the ``law'' of the solution. To avoid ambiguity, we refrain from this term.}
We ask:
\begin{question}\label{q:main}
    Can one determine $(t,x)\mapsto \phi_{t,x}$, the pointwise statistics at time $t\geq 0$, solely based on their initial value $x\mapsto \phi_{0,x}$? 
\end{question}

Fundamentally, it is worth noting that for general initial value problems with random data, the answer to Question \ref{q:main} is {\em negative:} the random field $u(t,x;\cdot)$ contains information regarding the correlations between different points in space and time, whereas the pointwise statistics $\phi_{t,x}$ does not. Put differently, even full knowledge of $x\mapsto \phi_{t,x}$ tells us nothing about the {\em joint} probability of $u(t,x;\omega)$ and $u(t,y;\omega)$ for any $x\neq y$. Since the mapping between $u(t,x;\omega)$ to $\phi_{t,x}$ is not injective, knowledge of $\phi_{0,x}$ should not be sufficient to predict $\phi_{t,x}$. 

 Notwithstanding, and using the fact that $u(t,x;\omega)$ is constrained by being the solution of a hyperbolic PDE such as \eqref{eq:genSet}, one can still answer Question \ref{q:main} affirmatively. The importance of the hyperbolicity of the underlying ODE can be seen, for example, by our use of the method of characteristics (see e.g., \cite[Sec.\ 3.2]{evans2022partial}) in proving the main results. To date, we {\em do not know of any other category of PDEs} for which the answer to Question \ref{q:main} is positive.

\subsection{Main results}\label{sec:new} In the {\bf linear scalar} (possibly heterogeneous) transport equation $$\partial_t u(t,x;\omega) =c(x)\partial_x u \, ,$$
the probability density function (PDF) of $\phi_{t,x}$, denoted by $f(t,x,\cdot)$, satisfies {\em the same} PDE (Theorem \ref{thm:cxPDF}) $$\partial_t f(t,x,U) =c(x)\partial_x f \, .$$  This description provides a positive answer to Question \ref{q:main}: since $f$ satisfies a linear transport equation, the initial data $f(0,\cdot ,\cdot)$ determines completely $f(t,\cdot, \cdot)$ for all later times. We provide two new proofs for this result - one using a weak formulation of the transport equation (weak in the random variable $\omega$, not in the space variable $x$), and the other using the method of characteristics.

In the {\bf scalar nonlinear case} 
$$ \partial_t u(t,x;\omega) = \partial_x \left( a(u)\right) \, , \qquad a''>0  \, ,$$
the story becomes more complicated: in the {\em absence of shocks}, the cumulative distribution function (CDF) $F(t,x,U) \equiv \phi_{t,x}(-\infty, U)$ satisfies a family of {\em linear} equations (Theorem \ref{thm:CDF_PDE}) $$\partial_t F +a'(U)\partial_x F = 0 \, ,$$  while the PDF satisfies a nonlocal PDE (Theorem \ref{thm:PDF_PDE_nl}). However, {\em when a shock forms, the answer to Question \ref{q:main} is negative.} We show an explicit example where knowledge of $\phi_{t,x}$ before the shock formation is {\em not sufficient} to determine $\phi_{t,x}$ for later times (Sec.\ \ref{sec:ShockExample}). 

For a broad class of {\bf systems of linear hyperbolic equations,} the answer to Question~\ref{q:main} is negative~(Sec.\ \ref{sec:sys}): $\{\phi_{t,x}\}_x$ cannot be determined from $\{\phi_{0,x}\}_x$, due to the importance of pointwise correlations between the different components of the vector-valued random fields (Prop.\ \ref{prop:sys_bad}). Instead, for $N$-dimensional systems we introduce the {\em $N$-points statistics} $\phi_{t,x_1, \ldots , x_N}$, and show that it can be propagated in time, and its PDF satisfies a $N+1$-dimensional linear transport equation (Prop.\ \ref{prop:Npoints}). 

Finally, in Section \ref{sec:reg} we apply our results to the {\bf statistical} study of {\bf Monte Carlo} methods in uncertainty quantification (UQ): using the evolution equations for $\phi_{t,x}$ derived in the previous sections, we show that the PDF $f(t,x,U)$ maintains the regularity of the initial PDF $f(0,x,U)$ (with analogous results for the CDF). We use that regularity result to prove {\bf a-priori statistical convergence rates} for a popular non-parametric estimator of $f(t,x,U)$, kernel density estimators (KDE, see \cite{wasserman2006all}). Hence, even without solving the evolution equations for the PDF and CDF, their mere existence provides insight into the accuracy of the more standard Monte-Carlo methods.

\subsection{Motivation}\label{sec:motivation}

Computing $\phi_{t,x}$ is a quintessential task in the field of UQ, often known as {\em uncertainty propagation}: given a deterministic model with random parameters or inputs  (usually a PDE), one is interested in the statistical characterization of the model outputs, e.g., of the solution $u(t,x;\omega)$. While traditionally the UQ literature focuses on moment estimation \cite{ghanem2017handbook, sudret2000stochastic, xiu2010numerical}, in many applications it is necessary to obtain the ``full statistics,'' i.e., the PDF or CDF of the quantity of interest \cite{chen2005uncertainty, colombo2018basins, ditkowski2020density, le2010asynchronous, najm2009uncertainty, patwardhan2019loss, sagiv2020wasserstein, sagiv2022spectral, sagiv2017loss, zabaras2007sparse}. 
Standard approaches to uncertainty propagation can be divided into two categories: 
\begin{itemize}
    \item Monte Carlo methods. Here, i.i.d.\ samples of $u(t,x;\omega)$ are computed and used in a non-parametric density estimator \cite{tsybakov2009nonparametric, wasserman2006all}. Broadly speaking, these methods are robust and theoretically sound, but tend to converge slowly (in the number of samples, i.e., of PDE solves), and are therefore often computationally expensive.
    \item Surrogate models. Here, the function $\omega \mapsto u(t,x;\omega)$ is approximated by a simpler function, e.g., a polynomial or a spline. While constructing the surrogate model may be computationally expensive, evaluating the surrogate model is relatively inexpensive computationally. Hence, the statistics associated with the surrogate model can be obtained to arbitrary precision. The accuracy of surrogate models relies on the smoothness of the map $\omega \mapsto u(t,x;\omega)$, which can only be a-priori guaranteed for certain equations (for such examples, see e.g., \cite{cohen2010convergence, jahnke2022multilevel}). Another fundamental challenge of surrogate models is the curse of dimensionality: without special care (e.g., the design of sparse grids \cite{back2011stochastic}), the computational cost of a surrogate model with fixed accuracy would increase exponentially in the dimension of $\Omega$.
\end{itemize}   
Uncertainty Propagation in {\em hyperbolic equations} has garnered a lot of attention over the years, with hundreds of works using both Monte Carlo methods, e.g., \cite{mishra2012sparse, mishra2013multi, muller2013multilevel}, and surrogate models, e.g., \cite{pettersson2009numerical, poette2009uncertainty, tryoen2010intrusive}.
See \cite{abgrall2017uncertainty, jin2017uncertainty} and the references therein for a broader and systematic review.

 From a {\bf practical point of view,} any description of the dynamics of $\phi_{t,x}$ (i.e., a positive answer to Question \ref{q:main}) {\em circumvents} the need for a large ensemble of solutions of the underlying PDE \eqref{eq:genSet}. From this perspective, our results show that (in some cases) the pointwise statistics  $\phi_{t,x}$ can be studied directly, without resorting to the costly processes of sampling from or approximating the random field $u(t,x;\omega)$.

 \subsection{Existing literature}\label{sec:lit}

Let us first mention two other notable analogous settings in which Question \ref{q:main} has been explored. First, consider an {\em Ordinary} Differential Equations (ODE) of the form $\dot{\bf y} (t) = Q({\bf y})$ and where the initial conditions ${\bf y}(0) = {\bf y_0}\in \RR ^n$ are randomly distributed according to the probability measure~$\rho_0$. Define the solution operator (flow) for time $t\geq 0$ as $Y_t {\bf y_0} \equiv {\bf y}(t)$. Then  ${\bf y}(t)$ is distributed according to $\rho(t)\equiv (Y_t)_{\sharp} \rho_0$, which satisfies the $(n+1)$-dimensional continuity equation \cite[Chapter 4]{santambrogio2015optimal}:
\begin{equation}\label{eq:contEq}
         \partial_t \rho (t,y) + \vec \nabla_{y} \cdot \left(\rho Q\right) = 0 \, , \qquad \rho(0,\cdot )= \rho_0 \,.
     \end{equation}
     For presentation purposes, it suffices to consider an absolutely continuous $\rho (t)$ and to identify it with its PDF, $\rho(t,\cdot)\in C([0,\infty) \to L^1(\RR^n))$. \footnote{For a general measure, the derivatives need to be taken in an appropriate sense, see \cite[Chapter 4]{santambrogio2015optimal}.} We now see that $\rho(t,y)$ is not the pointwise statistics $\phi_{t,x}$. \footnote{The analog of $\phi_{t,x}$ in the ODE settings would be the coordinate-wise statistics $\phi(t,j)$ for $1\leq j \leq n$, defined as the corresponding marginal, e.g., $\phi_{t,1}(A) \equiv \rho(t)(A\times \RR^{n-1})$ for all measurable $A\subseteq \RR$. 
     The analog of $\rho(t)$ in the {\em PDE} settings can be defined as follows: $U_t u_0(\cdot) \equiv u(t,\cdot)$ is the flow/solution operator of the hyperbolic initial value problem \eqref{eq:genSet}. Then $\rho_0$ would be the measure on the function space describing the distribution of $u_{0,\cdot}(\omega)$ is, and $\rho (t) \equiv (U_t)_{\#}\rho _0$. This is still a probability measure on function spaces, as opposed to $\phi_{t,x}.$}  
     Therefore, Question \ref{q:main} and our main results are not an immediate generalization of \eqref{eq:contEq}. 

     The second analogous settings are that of {\em Stochastic} PDEs of the form
     $$ dX_t = \mu (t,X_t)\,dt + \sigma(t,X_t)dW_t \, .$$ Here, the randomness in the solution is due to the equation itself, rather than the initial data, and the pointwise statistics $\phi_{t,x}$ satisfy a deterministic PDE known as the  Fokker Planck equation (FPE). Perhaps the simplest and most well-known settings are those of driftless homogeneous white noise ($\mu \equiv 0$ and $\sigma(t,X_t)\equiv X_t$), in which the FPE reduces to the heat equation. For further details, see e.g.,  \cite{pavliotis2014stochastic, risken1996fokker}.

Turning to the present settings of {\em hyperbolic PDEs}, evolution equations for the PDF and CDF, such as \eqref{eq:PDF_utcxux} and \eqref{eqn:CDF_noshock}, were previously derived in an insightful line of works under the name ``the method of distribution''. See e.g., \cite{boso2016method, boso2020data, tartakovsky2015method ,yang2020method,  yang2022method} and the references therein. These works provide algorithms for various use cases, with a focus on practical applications. 

This work makes several substantial contributions, with the long-term goal of developing a fundamental understanding of Question \ref{q:main}. First, our methods of proof are new. In particular, our use of the method of characteristics throughout this work helps identify what is unique about hyperbolic equations that yields evolution equations for $\phi_{t,x}$. Our measure-transport perspective also provides a clear method for deriving the dynamics of one-point statistics. Second, by providing definitive negative answers to Question \ref{q:main} in the cases of shock formation and linear systems, we identify the limitations of the pointwise statistics: notably, for linear systems, we identify the $N$-points statistics $\phi _t(x_1, \ldots ,x_N)$ as the fundamental object that can be propagated in time. Finally, we establish a linkage between the evolution equations for the CDF and PDF, their regularity, and statistical error analysis of Monte Carlo methods, which we believe could be a helpful approach to studying Monte Carlo methods in UQ more broadly.

Related to this work is the concept of {\em statistical solutions} of hyperbolic conservation laws \cite{fjordholm2017statistical, fjordholm2020statistical}. The original motivation in \cite{fjordholm2017construction} is the well-posedness of systems of $N$ hyperbolic conservation laws $\partial_tu + \vec\nabla_x \cdot f(u) = 0$, in dimension $d>1$. The idea was to generalize the PDE by embedding the initial data of the PDE in the space of Young measures, and then seeking the corresponding measure-valued solution. In detail, the initial data $u_0:\RR ^d \to \RR^N$ is identified with the (atomic) Young measure $x\mapsto \delta_{u_0(x)}$, which is the initial data for a corresponding {\em measure-valued} Cauchy problem \begin{equation}\label{eq:fjordholmMV}
\partial_t\langle \nu(t,x), {\rm id}\rangle + \nabla _{x} \langle \nu(t,x), f\rangle = 0 \, ,
\end{equation}
where a measure acts on a function by $\langle \nu(t,x), g\rangle = \int\limits g(U) d\nu (t,x)(U) $. A main result of \cite{fjordholm2017construction} is that \eqref{eq:fjordholmMV} with ``deterministic'' (or atomic) initial condition admits an entropy-minimal solution (in an appropriate sense). 

In the settings of this manuscript, \eqref{eq:genSet}, solutions to \eqref{eq:fjordholmMV} with $\nu(0,x)=\phi_{0,x}$ are far from unique: for example, even for the simplest linear and scalar conservation law, $u_t+u_x = 0$,  \eqref{eq:fjordholmMV} reads as $$\partial_t \mathbb{E}_{\omega}[u(t,x;\omega)] + \partial_x  \mathbb{E}_{\omega}[u(t,x;\omega)] = 0\, ,$$
i.e., it can only predict the mean, and not the full measure $\phi_{t,x}$.

In \cite{fjordholm2017statistical}, this non-uniqueness issue is solved by embedding \eqref{eq:fjordholmMV} into an {\em infinite hierarchy} of evolution equations not only for $\phi_{t,x}$, but also to the $k$-points statistics, e.g., for $k=2$ the quantity $\phi_{t,x,y}$ defined in Section \ref{sec:sys}. On the one hand, \cite{fjordholm2017statistical} proves that this infinite hierarchy of pointwise correlations is guaranteed to have a solution in time. On the other hand, it is an infinite hierarchy (and so cannot be numerically computed), and it is indeed shown to be equivalent to the evolution of the entire random field $u(t,x;\omega)$. From both a practical and a fundamental perspective, it is interesting that this hierarchy can be truncated after a finite number of points: in this paper, $k=N$ for $N$ linear hyperbolic equations, and $k=1$ in nonlinear equations without shocks.

\subsection{Discussion and Consequences}
The positive answers to Question \ref{q:main} lead to several consequences which, to the best of our knowledge, were not previously noted.

{\bf Spatial correlations are not necessary.} The pointwise statistics $\{ \phi _{t,x} \}_{(t,x)\in \RR_t \times \RR_x}$ is a two-dimensional parametric family of probability measures on $\RR$, {\em but not a random field,}  i.e., not a probability measure of some function space from $\RR_t \times \RR_x$ to $\RR_U$. In particular, $\{\phi_{t,x}\}_{(t,x)}$ does not contain information about spatial correlations between different points in space and time. For example, even full knowledge of $\{ \phi _{t,x} \}_{(t,x)\in \RR_t \times \RR_x}$ cannot be used to compute $\mu \left\{ \omega ~~|~~ u(t,x_1;\omega) > u(t,x_2;\omega)\right\}$ for $x_1\neq x_2$ and some $t>0$.
    
    Hence, Theorem \ref{thm:cxPDF} shows that while an initial condition is expressed in terms of its dependence on $\omega$, it is only the PDF $\phi_{0,x}$ that determines $\phi_{t,x}$ later on. Therefore, rather than tracking the (potentially infinite-dimensional) random field $u(t,x;\omega)$ and {\em derive } $\phi_{t,x}$, one can simply evolve in time the $2$-dimensional scalar function $t\mapsto f(t,x;U)$.

{\bf  Canonical choice of $(\Omega, \omega)$ and dimensionality reduction.} Suppose that, in addition to \eqref{eq:genSet}, we are given another set of initial data
$$
 \tilde u(0,x,\gamma) = \tilde{u}_{(0,x)}(\gamma)  \, ,
$$
where $\gamma$ is in {\em another} probability space $(\Gamma,  \mu _{\gamma})$.
Our results then guarantee that if $$\phi_{(0,x)}= u_{0,x}(\omega)_{\sharp}\mu = \tilde{u}_{0,x}(\gamma)_{\sharp} \mu_{\gamma} \, , \qquad \forall x\in \RR\, , $$
then the corresponding PDFs $\phi_{t,x}$ will remain equal for all times.

 We can thus propose a {\em canonical} choice of $\omega$ and $\tilde{u}(0,x;\omega)$: {\em the inverse CDF.} Set $\Omega=[0,1]$ with the uniform probability (Lebesgue) measure, and $$u_{0,x}(\omega) = F_{(0,x)}^{-1}(\omega) \equiv   \inf \{ U ~~ | ~~ F(U)\geq \omega) \}\, ,$$ the left-continuous inverse of the CDF (or quantile function). It is  standard that $(F^{-1}_{(0,x)})_{\sharp} \lambda = \phi_{(0,x)}$, see e.g., \cite[Chapter 2]{santambrogio2015optimal}.

 {\em This canonical choice of initial condition serves as a dimensionality reduction technique,} which can be used as a pre-processing step for surrogate models: if all we are interested is in computing $\phi_{t,x}$ at later times, then it is sufficient to consider the one-dimensional construction above, thus allowing us always to construct surrogate models in one-dimensional settings.\footnote{We emphasize that this approach to dimension reduction will not provide information about correlations between different points in space, since the resulting random fields $u$ and $\tilde{u}$ may very well be different.}

{\bf Comparison with existing algorithms.}  For a given $t_*,U_*>0$ and  $x_*\in \RR$, computing  $f(t_*,x_*,U_*)$ can be done by {\em (i)} Computing $f(0,x,U_*)$ directly for all $x\in \RR$, and {\em (ii)} Solving the PDE \eqref{eq:PDF_utcxux} once, up to time $t_*$. An analogous procedure can be proposed for computing the CDF $F(t_*,x_*,U_*)$. 
How does this approach compare with existing algorithms for finding $f(t,x,U)$?

The accuracy of the proposed algorithm is as good as the accuracy of the PDE solver. The computational downside is that for {\em every} choice of $U_* \in \RR$, one needs to repeat the PDE solve anew.

In comparison, a {\bf Monte Carlo method,} would draw $N\geq 1$ i.i.d.\ samples $\omega_1, \ldots, \omega_N \sim \mu$, and for each of which solve the original PDE \eqref{eq:linScalar} to obtain an approximation of $\{u(t,x;\omega_j)\}_{j=1}^N$. Here the complexity is $N$-times that of the PDE solver, but we immediately get an estimate for $f(t,x,U)$ for {\em all} $(t,x,U)$. The accuracy is dominated by two factors: the statistical error of the MC method, which {\em at best} scales at the parametric rate $\mathcal{O}(N^{-1/2})$; and the numerical error of the PDE scheme. 

Similarly, {\bf surrogate models} (e.g., collocation gPC), use $N$ solutions of the original PDE \eqref{eq:linScalar} to obtain $\{u(t,x;\omega_j)\}_{j=1}^N$, and thereby obtain an approximation of $f(t,x,U)$ for all $x$ and $U$. As noted above, the accuracy of such methods is preferable to that of Monte Carlo methods in moderate dimensions, but deteriorates exponentially with the dimension of $\Omega$ without any special treatment \cite{ghanem2017handbook, sudret2000stochastic}.

{\bf Regularity Theory and Statistical Error Estimates for Monte Carlo Methods.} In those cases where the PDF $f(t,x,U)$ or CDF $F(t,x,U)$ are governed by a PDE, we can deduce from it their regularity for all time $t>0$. Furthermore, regularity theory could yield an a priori statistical error analysis for Monte Carlo methods when applied to these equations. See Sec.\ \ref{sec:reg} for details.

\subsection{Open Questions}

\begin{enumerate}

     \item Theorem \ref{thm:CDF_PDE}, which concerns the  CDF associated with nonlinear scalar conservation laws, is only valid outside of the ``shock envelope'', the regions in the $(t,x)$ plane where the solution exists globally with probability $1$. This limitation is demonstrated to be a fundamental one by a numerical example (Sec.\ \ref{sec:ShockExample}).     One intriguing approach to overcoming this limitation is that of \cite{boso2020data}, where a PDE for the CDF is deduced even in the presence of shocks; however, it contains an unknown source term that the authors learn from data.  In \cite{fjordholm2017statistical}, this issue is bypassed rigorously by considering the {\em infinite} hierarchy of multi-point statistics $\{\phi_{t,x_1,\ldots,x_k}\}_{k\geq 1}$. Since these infinite hierarchies cannot be computed in full, the question of their computational approximation is then studied in \cite{fjordholm2020statistical}.  Whether or not some knowledge of the shocks can be incorporated into a {\em finite} statistical object whose evolution can be described in closed form, remains an open question.

    \item Theorem \ref{thm:2sys} hinges on Assumption \ref{as:P_x0}: for linear systems of evolution equations, the speed of propagation matrix, $D(x)$ in \eqref{eq:2sys_gen}, can be diagonalized by an $x$-independent matrix. The source of the issue appears to be technical and is discussed in Remark \ref{rem:2sysPx}. Whether an evolution equation for $\phi_{t,x,y}$ associated with the general system \eqref{eq:2sys_gen} {\em without limiting assumptions} remains open.
    
    \item Scalar nonlinear equations $u_t + \partial_x a(u) = 0$ with $a'' >1$ can be rewritten in relaxation form: the limit $\varepsilon \to 0$ of the system (after a change of coordinates) \cite{jin1995relaxation}:
\begin{equation} \label{eqn:cl-relax}
\begin{cases}{}
p_t + p_x = - \frac{1}{\varepsilon} \left( \frac{p- q}{2} - a\left( \frac{p + q}{2}\right) \right) \equiv - b(p, q) \, ,
\\ q_t - q_x = b(p, q) \, ,
\end{cases}
\end{equation}
where we solve for unknown $p(t,x)$ and $q(t,x)$. This seemingly alternative approach to propagating $\phi_{t,x}$ in the presence of shocks, however, proves to be challenging. As noted in the previous open problem, even for a fixed $\varepsilon>0$, two-point statistics associated with \eqref{eqn:cl-relax}, which we denote by $\phi_{t,x,y}^{\varepsilon}$, cannot be obtained from Theorem \ref{thm:2sys} -- due to the mixed sources, i.e., the $p$ and $q$ dependence of the right-hand side in \eqref{eqn:cl-relax}. If one was able to propagate in time $\phi_{t,x,y}^{\varepsilon}$, then the challenge would be to understand the limit $\lim_{\varepsilon\to 0}\phi_{t,x,y}^{\varepsilon}$ and see whether it is related to the original nonlinear problem.
    
\end{enumerate}

\section{Scalar linear transport equation}\label{sec:scalar}
We begin with linear, scalar, variable-coefficient (heterogeneous) hyperbolic equations. Though the results here could be derived as a special case of the nonlinear case (Sec.\ \ref{sec:nonlin}), we consider these separately for presentation purposes: the proofs here are simpler, and there are almost no technical caveats. Concretely, consider $u:\RR_t \times \RR_x \times \Omega \to \RR$ satisfying\footnote{A sufficient regularity condition for existence and uniqueness is that  $u_{0,x}(\omega)$ is piecewise $C^1$ in $x$ and $c\in C^1 (\RR)$. We do not pursue here the question of minimal regularity, such as in \cite[Chapter 7]{evans2022partial}.}
\begin{equation}\label{eq:linScalar}
\begin{cases}
\partial_t u(t,x;\omega )= c(x)\partial_x u  \, ,
\\ u(0,x,\omega) = u_{(0,x)}(\omega)  \, ,
\end{cases}
\end{equation}
for some $c\in C^1(\RR)$. 
\begin{theorem}\label{thm:cxPDF}
Consider \eqref{eq:linScalar} with $c\in C^1 (\RR)$.
Assume that $f_0(x,U)$ is piecewise $C^1$ in $x$ for every $U\in \RR$.  Then  $f(t,x,U)$, the PDF of the pointwise statistics $\phi_{t,x}$ satisfies the following equation 
\begin{equation}\label{eq:PDF_utcxux}
\begin{cases}
\partial_t f(t,x;U )= c(x)\partial_x f(t,x;U) \, ,
\\ f(0,x,U) = f_0(x,U)  \, ,
\end{cases}
\end{equation}
where $f_0(x,U)$ is the PDF of $\phi_{0,x}$.
\end{theorem}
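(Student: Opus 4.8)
The plan is to give two proofs, matching the paper's stated intention (a weak-formulation argument and a method-of-characteristics argument), but I will focus on the characteristics approach as the conceptual core. The starting point is the explicit solution of \eqref{eq:linScalar}: for fixed $\omega$, the characteristics of $\partial_t u = c(x)\partial_x u$ are the curves $t\mapsto X(t;x_0)$ solving $\dot X = -c(X)$, $X(0)=x_0$, along which $u$ is constant. Hence there is a diffeomorphism $x \mapsto \Psi_t(x)$ of $\RR_x$ (the time-$t$ flow of the ODE $\dot X = -c(X)$, using $c\in C^1$) such that $u(t,x;\omega) = u_{(0,\Psi_t(x))}(\omega)$ for all $\omega$. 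The crucial observation is that $\Psi_t$ is a \emph{deterministic} map — it does not depend on $\omega$ — because the transport speed $c(x)$ is deterministic.

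From this, the pointwise statistics transform trivially: since $u(t,x;\cdot) = u_{(0,\Psi_t(x))}(\cdot)$ as random variables, pushing forward $\mu$ gives $\phi_{t,x} = \phi_{0,\Psi_t(x)}$ as measures on $\RR_U$. In particular the PDF (when it exists) satisfies $f(t,x,U) = f_0(\Psi_t(x),U)$ for every $U$, with the $U$-variable a passive parameter. It then remains to check that the right-hand side solves \eqref{eq:PDF_utcxux}: differentiating $f(t,x,U) = f_0(\Psi_t(x),U)$ in $t$ and $x$ and using $\partial_t \Psi_t(x) = -c(\Psi_t(x))$ together with the standard flow identity $\partial_x \Psi_t(x) = $ the Jacobian of the flow, one recovers $\partial_t f = c(x)\partial_x f$ by the chain rule — exactly as one verifies that $u(t,x;\omega)=u_{(0,\Psi_t(x))}(\omega)$ solves \eqref{eq:linScalar}. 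The piecewise-$C^1$ hypothesis on $f_0(\cdot,U)$ is what makes these manipulations legitimate (the solution is classical away from finitely many propagating kinks, and the equation is understood in the natural piecewise/weak sense there).

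For the alternative (weak-in-$\omega$) proof, I would test $\phi_{t,x}$ against an arbitrary $g\in C_b(\RR_U)$: by definition of pushforward, $\int g\,d\phi_{t,x} = \mathbb{E}_\omega[g(u(t,x;\omega))]$. Writing $G(t,x) := \mathbb{E}_\omega[g(u(t,x;\omega))]$ and differentiating under the expectation (justified by dominated convergence, using boundedness of $g$ and the regularity of $u$ in $(t,x)$), the chain rule plus \eqref{eq:linScalar} gives $\partial_t G = \mathbb{E}_\omega[g'(u)\,\partial_t u] = c(x)\,\mathbb{E}_\omega[g'(u)\,\partial_x u] = c(x)\,\partial_x G$. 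Thus $G(t,x) = \int g\,d\phi_{t,x}$ solves the same transport equation for every test function $g$; since this holds for all $g$, the family $\{\phi_{t,x}\}$ evolves by the transport equation in the weak (measure) sense, and passing to densities under the stated regularity yields \eqref{eq:PDF_utcxux}.

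The main obstacle is not any single estimate but the bookkeeping around regularity and the role of $U$: one must be careful that (i) $\phi_{t,x}$ genuinely has a density — this follows because $\Psi_t$ is a bijection of the $x$-line only, acting as a relabeling that does not touch the $U$-fiber, so absolute continuity of $\phi_{0,x}$ is preserved — and (ii) the equation \eqref{eq:PDF_utcxux} is interpreted correctly across the propagating discontinuities inherited from the piecewise-$C^1$ initial data, where "$\partial_x f$" should be read in the piecewise classical / distributional sense consistent with the transport structure. Once the deterministic-flow observation $u(t,x;\omega)=u_{(0,\Psi_t(x))}(\omega)$ is in hand, everything else is a chain-rule verification; the conceptual content is entirely that the randomness is merely carried along characteristics that do not themselves depend on $\omega$.
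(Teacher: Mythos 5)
Your two proofs mirror the paper's own: the weak-in-$\omega$ argument (testing the pushforward against smooth test functions and substituting $\partial_t u = c(x)\partial_x u$ inside the expectation) is essentially identical to the paper's main proof, and the characteristics argument matches the paper's alternative proof in spirit. The weak-formulation computation you sketch is correct as written.

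The characteristics argument, however, has a direction slip that, if carried out, produces the \emph{wrong sign}. You take $\Psi_t$ to be the time-$t$ flow of $\dot X = -c(X)$ and then write $u(t,x;\omega) = u_{(0,\Psi_t(x))}(\omega)$. But constancy of $u$ along characteristics $\dot X = -c(X)$ reads $u(t,\Psi_t(x_0);\omega) = u_{(0,x_0)}(\omega)$, i.e.\ $u(t,x;\omega) = u_{(0,\Psi_t^{-1}(x))}(\omega)$, where $\Psi_t^{-1}$ is the time-$t$ flow of $\dot\eta = +c(\eta)$. With your formula as stated, the chain-rule check gives $\partial_t f = (\partial_1 f_0)(\Psi_t(x),U)\cdot\bigl(-c(\Psi_t(x))\bigr)$ and $\partial_x f = (\partial_1 f_0)(\Psi_t(x),U)\cdot\partial_x\Psi_t(x)$; since the flow of $-c$ satisfies the Jacobian identity $c(\Psi_t(x)) = c(x)\,\partial_x\Psi_t(x)$, one arrives at $\partial_t f = -c(x)\partial_x f$, which is not \eqref{eq:PDF_utcxux}. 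The fix is to replace $\Psi_t$ by its inverse (or, equivalently, take $\Psi_t$ to be the flow of $\dot X = +c(X)$); then $\partial_t(\Psi_t^{-1}(x)) = +c(\Psi_t^{-1}(x))$ and the analogous identity $c(\Psi_t^{-1}(x)) = c(x)\,\partial_x\Psi_t^{-1}(x)$ yield $\partial_t f = c(x)\partial_x f$ as desired. The rest of your reasoning — that $\Psi_t$ is deterministic so $\phi_{t,x}$ is a pure relabeling of the $x$-fibers, hence absolute continuity is preserved with the $U$-variable inert, and that the piecewise-$C^1$ hypothesis is what keeps the manipulations classical — is sound and aligned with the paper.
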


While a version of this result appeared in \cite{tartakovsky2015method}, we provide two new proofs in Sec.\ \ref{sec:cxPDF_pfs}.

 \begin{remark*}[Explicit example]   Consider $\Omega = [0,1]$ with the uniform Lebesgue measure, and consider $u_t=u_x$ with initial data $u_{(0,x)} (\omega) = (1+\omega) e^{-x^2}$.
 At $t=0$, the measure $\phi(0,x)$ is the uniform probability measure on $I_0(x) \equiv [e^{-x^2}, 2e^{-x^2}]$, for every $x\in \RR$. Therefore, $f(0,x;U)= |I_0(x)|^{-1}\chi_{I_0(x)}(U)$, where we denote by $\chi_{[a,b]}$ the identifier function of an interval $[a,b]$. Since the solutions are traveling wave $u(t,x;\omega) = u(0,x+t;\omega)$, we have that $f(t,x,U)= |I_0 (x+t)|^{-1} \chi_{I_0 (x)} (U)$. Indeed, $f(t,x,U)$ satisfies \eqref{eq:PDF_utcxux} as expected by Theorem \ref{thm:cxPDF}.   
\end{remark*}


\subsection{Two proofs of Theorem \ref{thm:cxPDF}}\label{sec:cxPDF_pfs}
\begin{proof}[Proof of Theorem \ref{thm:cxPDF}]
    Fix $t,x\in \RR$. Since $\phi_{t,x}$ is the pushforward measure of $\mu$ by the evolution operator $u(t,x;\cdot)$, then for every $h~\in~C_c^{\infty}(\RR)$ we have that 
    \begin{align} \label{push}
       \int\limits_{\RR} f(t,x,U) h(U) \, \rd U = \int\limits_{\Omega} h(u(t,x;\omega)) \, \rd \mu (\omega) \, . 
    \end{align}
    Differentiating both sides of \eqref{push}, we get on the left-hand side 
    $$  \partial_t \int\limits_{\RR} f(t,x,U) h(U) \, \rd U = \int\limits_{\RR} \partial_t f(t,x,U) h(U) \, \rd U \, , $$
    and on the right-hand side, 
    \begin{align*}
         \partial_t \int\limits_{\Omega} h(u(t,x;\omega)) \, \rd \mu (\omega)  
        & = \int\limits_{\Omega} h'(u(t,x;\omega)) \partial_t u(t,x;\omega) \, \rd \mu (\omega)  \\
        [{\rm using}~\eqref{eq:linScalar}] \qquad \qquad  \qquad & = \int\limits_{\Omega} h'(u(t,x;\omega)) c(x) \partial_x u(t,x;\omega) \, \rd \mu (\omega)  \\
        & = c(x)\int\limits_{\Omega} h'(u(t,x;\omega))  \partial_x u(t,x;\omega) \, \rd \mu (\omega) \\
        & = c(x)\partial_x \int\limits_{\Omega} h(u(t,x;\omega))  \, \rd \mu (\omega)  \\
        [{\rm using}~\eqref{push}] \qquad \qquad \qquad & =c(x) \partial_x \int\limits_{\RR} f(t,x,U) h(U) \, \rd U \\
        & = \int\limits_{\RR}  c(x) \partial_x f(t,x,U) h(U) \, \rd U \, .
    \end{align*}
    Hence, 
    \begin{equation}
        \int\limits_{\RR} \partial_t f(t,x,U) h(U) \, \rd U = \int\limits_{\RR}  c(x) \partial_x f(t,x,U) h(U) \, \rd U \, ,
    \end{equation}
    for every $h\in C_c^{\infty}$, and therefore by density, for every $h\in L^1$. Hence, the integrands are equal, and we have the PDE \eqref{eq:PDF_utcxux}.

Note that, since the initial condition is piecewise $C^1$ in the $x$-variable, the PDE \eqref{eq:PDF_utcxux} itself guarantees that $f(t,\cdot ;\cdot)$ remains in the same space for all $t\geq 0$, and hence the derivation is valid.
\end{proof}

The alternative proof to Theorem \ref{thm:cxPDF} uses the method of characteristics:
\begin{proof}[An alternative proof for Theorem \ref{thm:cxPDF}.]
    Fix $\omega\in \Omega$ such that $x\mapsto u_{0,x}(\omega)$ is in $C^1$. Then the solution to \eqref{eq:linScalar} can be written, by the method of characteristics, as 
    $$u(t,x;\omega) = u_{0}(\xi_x (t); \omega) \, ,  $$
    where $\xi_x (t)$ solves the ODE 
    \begin{equation}\label{eq:cx_char}
\begin{cases}
\dot{\xi}_x (t) = -c(\xi_x) \, ,
\\ \xi_x (0) = x \, .
\end{cases}
\end{equation}
Fix $t\geq 0$ and define $q _t (x)$ to be the map $x\mapsto \xi_x (t)$. From ODE theory for existence and uniqueness, $q_t$ is injective, and therefore $u(t,x;\omega) =u_{0} (q_t ^{-1}(x);w)$. Hence, for any measurable $A\subseteq \RR$
\begin{align*}
     \phi_{t,x}(A) &= \mu \left( \{ \omega \in \Omega ~~| ~~ u(t,x;\omega )\in A \}\right) \\ 
     &= \mu \left( \{ \omega \in \Omega ~~| ~~ u_{0}( q^{-1}_t (x);\omega) \in A \}\right) = \phi_{t,q_t^{-1} (x)} (A) \, ,
\end{align*}
and therefore the PDF satisfies $f(t,x, U) = f_0 (q_t^{-1}(x),U)$ for all $t\in \RR$.
\end{proof}

\subsection{Extension to randomness in coefficient}
We now extend the initial randomness in Equation \eqref{eq:linScalar} to include randomness in the coefficient as well. While similar results have been obtained in \cite{tartakovsky2015method}, in this brief section, we show how our methods provide an alternative derivation. We consider the following stochastic hyperbolic equation: 
\begin{equation}\label{eq:lin2}
\begin{cases}
\partial_t u(t,x;\omega )= c(\omega)\partial_x u  \, ,
\\ u(0,x,\omega) = u_{(0,x)}(\omega)  \, .
\end{cases}
\end{equation}
Let 
\begin{align} \label{fc}
    \phi_{t,x} := (u(t,x; \cdot), c(\cdot))_\sharp \mu\,,
\end{align}
and if $\phi_{t,x}$ is absolutely continuous, let $f(t,x,U,C)$ be its PDF. Then for every $h \in C_c^\infty(\RR \times \RR)$,
\begin{align*}
    \int_{\RR \times \RR} f(t,x; U, C) h(U,C) \rd U \rd C = \int_\Omega h(u(t,x;\omega), c(\omega)) \rd \mu (\omega)\,.
\end{align*}
Relation \eqref{fc} can be viewed as an extension of \eqref{eq:phiDef}. Then the following can be proven in an analogous way to the proof of Theorem \ref{thm:cxPDF}:
\begin{corollary}
    Consider \eqref{eq:lin2}, and assume that $x\mapsto f_0(x,U,C)$ is piecewise $C^1$ for all $U,C \in \RR$. Then the PDF $f(t,x; U, C)$ satisfies the following linear transport equation 
\begin{equation}
\begin{cases}
\partial_t f(t,x;U, C )= C\partial_x f(t,x;U,C) \, ,
\\ f(0,x,U, C) = f_0(x,U, C)  \, .
\end{cases}
\end{equation}
\end{corollary}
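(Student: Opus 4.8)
The plan is to mirror the first (weak-formulation) proof of Theorem \ref{thm:cxPDF}, now tracking the joint statistics of the pair $(u(t,x;\omega), c(\omega))$ rather than of $u$ alone. The key observation is that $c(\omega)$ is a conserved quantity along the flow: since the coefficient in \eqref{eq:lin2} does not depend on $t$ or $x$, we have $\partial_t c(\omega) = 0$, and hence differentiating the test-function identity will produce no new term from the $C$-slot. First I would fix $t,x$ and take an arbitrary $h \in C_c^\infty(\RR\times\RR)$, and write the defining pushforward relation
\begin{equation*}
\int_{\RR\times\RR} f(t,x;U,C)\, h(U,C)\, \rd U\, \rd C = \int_\Omega h(u(t,x;\omega), c(\omega))\, \rd\mu(\omega)\, .
\end{equation*}

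Next I would differentiate both sides in $t$. On the left this gives $\int \partial_t f(t,x;U,C) h(U,C)\, \rd U\, \rd C$. On the right, the chain rule yields
\begin{equation*}
\int_\Omega \Big[ \partial_U h(u,c)\, \partial_t u(t,x;\omega) + \partial_C h(u,c)\, \partial_t c(\omega) \Big]\, \rd\mu(\omega)\, ,
\end{equation*}
and the second term vanishes because $c(\omega)$ is $t$-independent. Substituting $\partial_t u = c(\omega)\partial_x u$ from \eqref{eq:lin2}, the first term becomes $\int_\Omega c(\omega)\, \partial_U h(u,c)\, \partial_x u\, \rd\mu$. The crucial point is that here $c(\omega)$ can be written as the value of the \emph{$C$-coordinate} evaluated at $(u(t,x;\omega),c(\omega))$, so $c(\omega)\, \partial_U h(u,c) = \big[(\partial_U \tilde h)(u,c)\big]$ where $\tilde h(U,C) := C\, h(U,C)$ — note $\tilde h \in C_c^\infty$ as well. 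Then the $\omega$-integral of $(\partial_U\tilde h)(u,c)\,\partial_x u$ equals $\partial_x \int_\Omega \tilde h(u(t,x;\omega),c(\omega))\, \rd\mu$, again using that $c$ is $x$-independent so no $\partial_x c$ term appears; applying the pushforward identity backwards gives $\partial_x \int \tilde h(U,C) f\, \rd U\,\rd C = \int C\, h(U,C)\, \partial_x f\, \rd U\,\rd C$. Equating the two sides for all $h\in C_c^\infty$, then extending to $h\in L^1$ by density, yields $\partial_t f = C\,\partial_x f$ as claimed.

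The only real subtlety — and the step I would be most careful about — is the interchange of differentiation with the $\omega$-integral and with the $x$-derivative, which is where the hypothesis that $x\mapsto f_0(x,U,C)$ is piecewise $C^1$ enters. As in Theorem \ref{thm:cxPDF}, one argues that the transport equation $\partial_t f = C\partial_x f$ (whose characteristics are $x\mapsto x + Ct$ for each fixed $C$) propagates the piecewise-$C^1$-in-$x$ regularity forward in time, so the formal computation is justified a posteriori; alternatively, one may rerun the characteristics argument of the second proof of Theorem \ref{thm:cxPDF} on the enlarged variable, noting that the flow $\xi_x(t)$ now depends on $\omega$ only through $c(\omega)$, i.e. $u(t,x;\omega) = u_0(x + t\,c(\omega);\omega)$, so that the map $(\omega)\mapsto(u_0(x+tc(\omega);\omega), c(\omega))$ is an explicit reparametrization making the pushforward structure transparent. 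I do not expect any genuinely new obstacle beyond what already appears in the proof of Theorem \ref{thm:cxPDF}; the role of the extra coordinate $C$ is purely passive.
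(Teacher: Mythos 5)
Your proof is correct and follows exactly the approach the paper intends: the paper does not spell out a proof of this corollary, stating only that it can be proven analogously to Theorem~\ref{thm:cxPDF}, and your weak-formulation argument is precisely that analogue, with the extra $C$-coordinate riding passively because $\partial_t c(\omega)=\partial_x c(\omega)=0$. The one new wrinkle compared to Theorem~\ref{thm:cxPDF} --- that $c(\omega)$ can no longer be pulled outside the $\omega$-integral the way $c(x)$ was --- you handle cleanly with the auxiliary test function $\tilde h(U,C)=C\,h(U,C)$, which is the right move.
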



\section{Nonlinear equations}\label{sec:nonlin}

Consider a nonlinear scalar conservation law with random initial conditions,
\begin{equation} \label{eqn:cl}
\begin{cases}{}
u_t + a(u)_x = 0, 
\\ u(0,x,\omega) = u_{(0,x)}(\omega)\,,
\end{cases}
\end{equation}
where $a:\RR \to \RR_+$ is strictly convex. We define $\phi_{t,x}$ as in \eqref{eq:phiDef}, and as before define the corresponding CDF as $F(t,x,U) \equiv \phi_{t,x} \left( (-\infty, U] \right)$ and the corresponding PDF as $f(t,x,U) =\partial_U F(t,x,U) $.  Throughout, we assume that up to time $T\geq 0$, no shocks have formed. This assumption allows for the methods of characteristics to be used to derive a linear hyperbolic evolution equation for the CDF (Theorem \ref{thm:CDF_PDE}). While it is possible to differentiate the resulting CDF $F(t,x,U)$ to obtain $f(t,x,U)$, we also provide a direct derivation of the non-local linear equation that governs the propagation of the PDF (Theorem \ref{thm:PDF_PDE_nl}). Both methods of proof fall short of dealing with shock formation (Remark \ref{rem:noShock}), and thus we complement our theoretical results by a numerical example which demonstrates why there {\em could be no propagator for $\phi_{t,x}$} in the presence of shocks~(Sec.\ \ref{sec:ShockExample}).

\subsection{CDF evolution in the absence of shocks}
\begin{theorem}\label{thm:CDF_PDE}
    Consider \eqref{eqn:cl}, and assume that $\mu$-almost everywhere {\it (i)} $u_{(0,x)}(\omega) > 0$, and {\it (ii)} there exists $T\leq \infty$ such that a unique strong solution of \eqref{eqn:cl} exists for time $t\in [0,T]$. Then for all such $t$,
    \begin{equation} \label{eqn:CDF_noshock}
\begin{cases}{}
\partial_t F(t,x,U) + a'(U)\partial _x F = 0, 
\\ F(0,x,U) = F_{(0,x)} (U)\, .
\end{cases}
\end{equation}
Equivalently, the evolution of the CDF obeys $F(t,x,U)= F(0,x-a'(U)t,U)$.
\end{theorem}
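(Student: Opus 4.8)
The plan is to use the method of characteristics for the scalar conservation law \eqref{eqn:cl} and track how the event $\{u(t,x;\omega)\le U\}$ transports. Fix $U\in\RR$. In the absence of shocks up to time $T$, the classical solution is constant along characteristics: for each $\omega$, the characteristic emanating from $(0,y)$ is the straight line $x=y+a'(u_{(0,y)}(\omega))\,t$, and $u$ along it equals $u_{(0,y)}(\omega)$. The key structural observation is that the characteristic speed $a'(u)$ is a strictly increasing function of $u$ (since $a''>0$), so the relation ``$u(t,x;\omega)\le U$'' is equivalent to ``the characteristic speed at $(t,x;\omega)$ is $\le a'(U)$''. I would make this precise by showing that, for almost every $\omega$, the point $(t,x)$ with $u(t,x;\omega)\le U$ is exactly the set of points that can be reached from the initial segment $\{(0,y): u_{(0,y)}(\omega)\le U\}$ by characteristics of speed $\le a'(U)$; geometrically, tracing back the characteristic through $(t,x)$ to its foot $(0,y(t,x;\omega))$, one has $u(t,x;\omega)=u_{(0,y)}(\omega)$, and I want to identify the foot's dependence on $(t,x)$.

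Concretely, I would argue that $\{\omega : u(t,x;\omega)\le U\}=\{\omega: u(0,x-a'(U)t;\omega)\le U\}$, which immediately yields $F(t,x,U)=F(0,x-a'(U)t,U)$ and hence \eqref{eqn:CDF_noshock} upon differentiating in $t$ and $x$. To see the set equality: suppose $u(t,x;\omega)=v\le U$. The characteristic through $(t,x)$ has speed $a'(v)\le a'(U)$ and foot $y=x-a'(v)t$, so $u_{(0,y)}(\omega)=v\le U$. Now I claim $u_{(0,\,x-a'(U)t)}(\omega)\le U$ as well: since $a'(U)\ge a'(v)$, the point $x-a'(U)t$ lies to the left of $y=x-a'(v)t$; one then uses the no-shock hypothesis to control the monotone ordering of characteristics — because characteristics do not cross before time $T$, the value $u_{(0,\cdot)}(\omega)$ cannot exceed $U$ at any point reached from the left at the relevant slope, and in fact the foot of the characteristic through $(t,x)$ of \emph{any} speed $\le a'(U)$ carries an initial value $\le U$. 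The converse inclusion is symmetric. I would phrase this cleanly using the inverse of the ``footpoint map'' $y\mapsto y+a'(u_{(0,y)}(\omega))t$, which is strictly increasing and hence a bijection precisely because no shocks have formed on $[0,T]$; positivity of $u_{(0,x)}$ (hypothesis (i)) and $a''>0$ guarantee $a'(u)>a'(0)\ge$ a fixed lower bound, but more importantly $a'$ is monotone so the ordering argument goes through.

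Once the pointwise (in $\omega$) characterization is established, passing to probabilities is immediate: $F(t,x,U)=\mu\{u(t,x;\cdot)\le U\}=\mu\{u(0,x-a'(U)t;\cdot)\le U\}=F(0,x-a'(U)t,U)$, and differentiating gives $\partial_t F = -a'(U)\partial_x F$ with the stated initial data. I expect the main obstacle to be the careful justification of the monotone-ordering step: one must use the no-shock assumption to ensure that the footpoint map $y\mapsto y+a'(u_{(0,y)}(\omega))t$ is a genuine increasing bijection of $\RR$ for $t\le T$ and $\mu$-a.e.\ $\omega$, and then argue that the initial value $u_{(0,\cdot)}(\omega)$ restricted to the preimage structure behaves monotonically enough that reaching $(t,x)$ with speed $\le a'(U)$ forces the foot value to be $\le U$ — equivalently, that the set $\{(t,x): u(t,x;\omega)\le U\}$ is exactly the forward image under characteristics of $\{y: u_{(0,y)}(\omega)\le U\}$, and its right boundary at time $t$ is the single characteristic of speed exactly $a'(U)$. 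This is where the strict convexity of $a$ and the absence of shocks are both essential, and where I would spend the most care; the rest (differentiation, pushforward) is routine.
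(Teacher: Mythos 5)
Your proposal is correct and follows essentially the same route as the paper's proof: characterize the event $\{u(t,x;\omega)\le U\}$ via the method of characteristics, use strict convexity of $a$ to order the characteristic speeds, and use the no-shock hypothesis to forbid crossings. The paper establishes the set identity $S(t,x,U)=S(0,x-a'(U)t,U)$ by a direct contradiction argument (if $u(0,x-a'(U)t;\omega)>U$, the characteristic from $(0,x-a'(U)t)$ would cross the one reaching $(t,x)$ before time $t$), whereas you encode the same fact by observing that the footpoint map $y\mapsto y+a'(u_{(0,y)}(\omega))\,t$ is strictly increasing precisely when no shocks have formed, and then read off the ordering $u_{(0,\,x-a'(U)t)}(\omega)<U$ from monotonicity. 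These are two phrasings of the same geometric fact; your monotone-bijection framing is arguably a bit cleaner, and you correctly note that hypothesis (i) ($u_{(0,x)}>0$) plays no essential role beyond fixing a sign convention, as only monotonicity of $a'$ is used.
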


\begin{proof}
Define the {\em sublevel sets} of $u(t,x;\omega)$ by 
    $$ S(t,x,U) \equiv \left\{\omega \in \Omega ~| u(t,x;\omega) < U\right\} \, ,$$ for every $U\in \RR$. We can then rewrite the CDF as follows
    $$F(t,x,U) = \int\limits_{S(t,x,U)}   \rd \mu(\omega) \, . $$
    It is therefore sufficient to show that 
    \begin{equation}\label{eq:sublevels_target}
        S(t,x,U) = S(0,x-a'(U)t,U) \, .
    \end{equation}

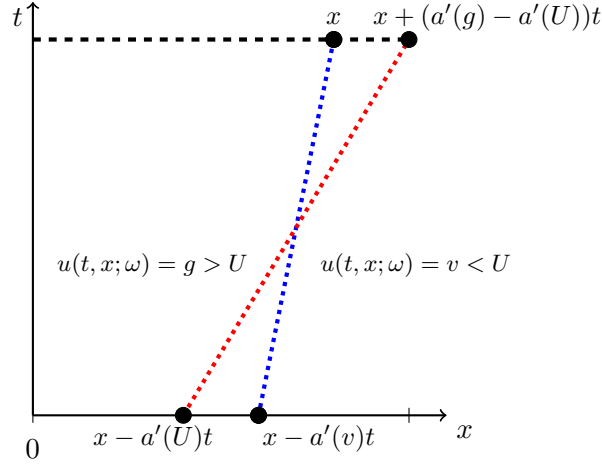
\begin{figure}[h]
    \centering
    \begin{tikzpicture}
        \draw[->, thick] (0,0) -- (5.5,0) node[below right] {\large $x$};
        
        \draw (5,0.1) -- (5,-0.1) node[below] {};
        
        \draw[->, thick] (0,0) -- (0,5.5) node[left] at (0,5.3) {\large $t$};
        
        \foreach \x in {0} {
            \draw (\x,0.1) -- (\x,-0.1) node[below] at (\x,-0.2) {\large $\x$};
        }
        
        \draw[dashed, ultra thick] (0,5) -- (5,5);
        
        \filldraw[black] (4,5) circle (3pt);
        
        \draw[blue, ultra thick, dotted] (4,5) -- (3,0);
        
        \filldraw[black] (3,0) circle (3pt);
        
        \node[anchor=west] at (3.7,2) {\color{black}\small $u(t,x;\omega)=v <U$};
        
        \draw[red, ultra thick, dotted] (2,0) -- (5,5);
        
        \filldraw[black] (2,0) circle (3pt) node[below, xshift =-4mm] {$x-a'(U)t$};
        \filldraw[black] (3,0) circle (3pt) node[below, xshift =8mm] {$x-a'(v)t$};
 \filldraw[black] (4,5) circle (3pt) node[above, yshift=0.8mm] {$x$};
\filldraw[black] (5,5) circle (3pt) node[above right, xshift=-6mm] {$x+(a'(g)-a'(U))t$};

        \node[anchor=west] at (0.2,2) {\color{black}\small $u(t,x;\omega)=g>U$};
    \end{tikzpicture}
    \caption{Illustration of the contradiction in the proof of Theorem \ref{thm:CDF_PDE}. Tracing the line from $(x,t)$ to $(x-a'(v)t,0)$ (dotted blue), and using the fact that $a'(U)>a'(v)$ due to convexity, we see that if $u(0,x-a'(U)t)=g>U$, it would violate the principle that characteristic lines do not intersect (dotted red line).}
    \label{fig:cartoonStxu}
\end{figure}

    First, take any $\omega \in S(t,x,U)$. Then, by definition, there exists $v\in [0,U)$ such that $u(t,x;\omega)=v$. By the method of characteristics, $u(0,x-a'(v)t;\omega) = v$. Now, due to the strict convexity of $a$, then $a'(U)> a'(v)$, and so $x-a'(U)t < x-a'(v)t$. Suppose that $\omega \notin S(0,x-a'(U)t,U)$, this would mean that $u(0,x-a'(U)t;\omega) =g >U$. But then $u(t,x+(a'(g)-a'(U))t;\omega) =g$, but that would mean that characteristic lines have crossed {\em before} the time $t$, contrary to the assumption that no shocks have formed. See Fig.\ \ref{fig:cartoonStxu}. Therefore $\omega \in S(0,x-a'(U)t,U)$, and we have proven that $S(t,x,U) \subseteq S(0,x-a'(U)t,U)$.

    To prove inclusion in the opposite direction, fix any $\omega \in S(0,x-a'(U)t,U)$, i.e., $u(0,x-a'(U)t;\omega) =v < U$. Define $x_* \equiv x+(a'(v)-a'(U))t$. By the method of characteristics, $u(t,x_* ;\omega)=v$. Since $a'$ is strictly increasing, $x_* < x$. On the other hand, if $u(t,x;\omega) = V>U$, but then $x-a'(V)t<x-a'(U)t$, and we would have that the characteristics cross, contradicting the assumption of no shocks being formed. Hence we have $S(t,x,U) \supseteq S(0,x-a'(U)t,U)$, and \eqref{eq:sublevels_target} is proven.
\end{proof}

\subsection{\revO{PDF evolution using a nonlocal PDE}}
Here we will derive an equation for the PDF $f(t,x,U)$ directly from the conservation law \eqref{eqn:cl}. 
\begin{theorem}\label{thm:PDF_PDE_nl}
Consider the scalar conservation law with random initial condition, \eqref{eqn:cl}. Assume that the PDF $f(t,x,U)$ exists for all $t\geq 0$ and $x\in \RR $, and \revO{and that the probability of shocks forming on $[0,t]$ is zero.} Then $f(t,x,U)$ satisfies
    \begin{equation}\label{eqn:PDF_PDE_nl}
\partial_t f + \frac{\partial}{\partial U} \left(a'(U) \int^U_{-\infty} \partial_x f  \rd v\right) = 0\,.
\end{equation}
\end{theorem}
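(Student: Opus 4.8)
The plan is to derive \eqref{eqn:PDF_PDE_nl} directly, mirroring the weak-formulation argument used in the first proof of Theorem \ref{thm:cxPDF}, but now accounting for the nonlinearity $a'(u)$ which depends on $\omega$ through $u$. First I would fix $t,x$ and test the pushforward identity against $h\in C_c^\infty(\RR)$: $\int_\RR f(t,x,U)h(U)\,\rd U = \int_\Omega h(u(t,x;\omega))\,\rd\mu(\omega)$. Differentiating in $t$ and using \eqref{eqn:cl} in the form $u_t = -a'(u)u_x$ gives, on the right-hand side, $-\int_\Omega h'(u)\,a'(u)\,\partial_x u\,\rd\mu$. The new feature compared to the linear case is that $a'(u)\partial_x u = \partial_x(a(u))$, so this equals $-\int_\Omega h'(u(t,x;\omega))\,\partial_x\big(a(u(t,x;\omega))\big)\,\rd\mu(\omega)$, and I would want to pull $\partial_x$ outside. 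Since $\frac{\rd}{\rd x}\big[(h'\circ a^{-1}\circ a)(u)\big]$ is not directly $h'(u)\partial_x(a(u))$, the clean way is to introduce $H$ with $H' = h$ and write $h'(u)\partial_x(a(u))$ in a form amenable to integration by parts in $U$ after re-expressing the $\Omega$-integral as a $U$-integral against $f$.

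The key manipulation I expect is the following. Rewrite the right-hand side term by re-expressing it through the CDF. Using $\int_\Omega g(u(t,x;\omega))\,\rd\mu = \int_\RR g(U)f(t,x,U)\,\rd U$ only works for functions of $u$ alone, so instead I would first establish the identity $\int_\Omega h'(u)\,\partial_x(a(u))\,\rd\mu = -\int_\RR h'(U)\,a'(U)\,\Big(\int_{-\infty}^U \partial_x f(t,x,v)\,\rd v\Big)\,\rd U$ by the following route: note $\partial_x F(t,x,U) = \partial_x \mu(\{u(t,x;\cdot)<U\})$, and $\int_\Omega \psi(u)\partial_x(\text{something})$ can be handled by differentiating the layer-cake representation $a(u(t,x;\omega)) = a(-\infty) + \int_{-\infty}^{u}a'(v)\,\rd v$, hence $\partial_x(a(u)) = a'(u)\partial_x u$, and $\int_\Omega h'(u)a'(u)\partial_x u\,\rd\mu$. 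Then using the coarea/layer-cake structure, $\partial_x \int_\Omega \mathbf{1}_{u(t,x;\omega)<U}\,\rd\mu = \partial_x F$ and Fubini, one gets $\int_\Omega \Phi(u(t,x;\omega))\partial_x u\,\rd\mu = -\int_\RR \Phi(U)\partial_x f(t,x,U)\,\rd U$ for suitable $\Phi$ via $\partial_x u$ pairing — more precisely I would differentiate $\int_\Omega G(u)\,\rd\mu = \int_\RR G(U)f\,\rd U$ in $x$ to get $\int_\Omega G'(u)\partial_x u\,\rd\mu = \int_\RR G(U)\partial_x f\,\rd U$, then choose $G$ with $G'(U) = h'(U)a'(U)$ and integrate by parts in $U$ on the right: $\int_\RR G(U)\partial_x f\,\rd U = -\int_\RR G'(U)\big(\int_{-\infty}^U\partial_x f\,\rd v\big)\,\rd U$ (boundary terms vanish by compact support propagation / decay of $f$). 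Substituting $G'(U)=h'(U)a'(U)$ and then integrating by parts once more in $U$ to move the derivative off $h'$ onto $a'(U)\int_{-\infty}^U\partial_x f\,\rd v$ yields $\int_\RR h(U)\,\partial_U\big(a'(U)\int_{-\infty}^U\partial_x f\,\rd v\big)\,\rd U$. Matching with $\int_\RR h(U)\partial_t f\,\rd U$ from the left-hand side and invoking arbitrariness of $h$ gives \eqref{eqn:PDF_PDE_nl}.

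The main obstacle, and the step I would be most careful about, is justifying the exchange of $\partial_x$ with the $\Omega$-integral and the two integrations by parts in $U$ — i.e., the boundary terms at $U=\pm\infty$ and the differentiability of $x\mapsto u(t,x;\omega)$ for $\mu$-a.e.\ $\omega$. The no-shock hypothesis is exactly what guarantees $x\mapsto u(t,x;\omega)$ is $C^1$ (a.e.\ $\omega$) so that $\partial_x u$ and $\partial_x f$ make sense classically, and the assumption that the PDF exists for all $t,x$ lets us write everything through $f$. The vanishing of boundary terms follows if $f(t,x,\cdot)$ and $a'(U)\int_{-\infty}^U\partial_x f\,\rd v$ decay at $\pm\infty$, which I would note holds because $F(t,x,\cdot)$ ranges in $[0,1]$ with limits $0,1$ and $\int_{-\infty}^\infty \partial_x f\,\rd v = \partial_x 1 = 0$, so $\int_{-\infty}^U\partial_x f\,\rd v = \partial_x F(t,x,U) \to 0$ as $U\to\pm\infty$. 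One could alternatively obtain \eqref{eqn:PDF_PDE_nl} by simply differentiating the CDF equation \eqref{eqn:CDF_noshock} of Theorem \ref{thm:CDF_PDE} in $U$: from $\partial_t F + a'(U)\partial_x F = 0$, apply $\partial_U$ to get $\partial_t f + a''(U)\partial_x F + a'(U)\partial_x f = 0$, and since $\partial_x F = \int_{-\infty}^U\partial_x f\,\rd v$, the last two terms combine to $\partial_U\big(a'(U)\int_{-\infty}^U\partial_x f\,\rd v\big)$, which is \eqref{eqn:PDF_PDE_nl}; I would present the direct derivation as the primary proof and perhaps remark on this shortcut, since the direct route does not presuppose the positivity hypothesis $u_{(0,x)}>0$ of Theorem \ref{thm:CDF_PDE}.
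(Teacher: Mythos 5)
Your argument is correct and follows essentially the same route as the paper: both test against $h\in C_c^\infty$, introduce the ``entropy flux'' $G$ (the paper's $\psi$) with $G'=a'h'$ to pull $\partial_x$ outside the $\Omega$-integral, and then reduce to $\int h'(U)a'(U)\,\partial_x F(t,x,U)\,\rd U$ before a final integration by parts — the only cosmetic difference is that you reach that intermediate expression by integrating by parts in $U$, whereas the paper expands $\psi$ as an integral and uses Fubini with the layer-cake identity $\int_v^\infty f = 1 - F$. Your closing observation that the direct derivation avoids the positivity hypothesis of Theorem \ref{thm:CDF_PDE} is accurate and worth noting.
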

By integrating \eqref{eqn:PDF_PDE_nl} with respect to $U$, one can readily obtain \eqref{eqn:CDF_noshock}, the evolution equation for the CDF. On the one hand, it is a slightly weaker result, since it {\em assumes} the existence of the PDF, i.e., that $\phi_{(0,x)}$ is absolutely continuous for all $x\in \RR$. However, we present it separately because the method of proof here is substantially different from that of Theorem \ref{thm:CDF_PDE}.
\begin{proof}
\begin{eqnarray*} 
\partial_t \int_{-\infty}^\infty f(t,x,U) h(U) \rd U &=& \partial_t \int_\Omega  h(\utx (\omega)) \rd \mu(\omega)  \nonumber
\\ &=& - \int_\Omega  h'(\utx) a'(\utx(\omega)) \partial_x \utx(\omega) \rd \mu(\omega)\,.
\label{eqn:4}
\end{eqnarray*}
Now let 
\begin{equation} \label{eqn:entropy}
\psi'(u) = a'(u) h'(u)\,,
\end{equation}
then (\ref{eqn:4}) becomes
\begin{eqnarray*}
 \partial_t \int_{-\infty}^\infty f(t,x,U) h(U) \rd U  &=& - \int_\Omega  \psi'(\utx(\omega)) \partial_x \utx (\omega) \rd \mu(\omega)  \nonumber
\\ &=& -\partial_x \int_\Omega  \psi(\utx(\omega)) \rd \mu(\omega) \nonumber \\
&=&  - \partial_x \int_{-\infty}^\infty  f(t,x,U) \psi(U) \rd U  \nonumber
\\ &=& - \partial_x  \int_{-\infty}^\infty  f(t,x,U) \left( \int^U_{-\infty} a'(v) h'(v) \rd v \right) \rd U \nonumber
\\ & = & -\partial_x \int_{-\infty}^\infty \psi'(v) \int_{v}^\infty  f(t,x,U) \rd U \rd v \nonumber
\\ & = &  -\partial_x \int_{-\infty}^\infty \psi'(v) \left( 1 - \int_{-\infty}^v f(t,x,U) \rd U \right) \rd v \nonumber
\\ & = & \partial_x \int_{-\infty}^\infty h'(v) a'(v) \int_{-\infty}^v f(t,x,U) \rd U \rd v  \nonumber
\\ & = & - \partial_x \int_{-\infty }^\infty h(v) \frac{\rd}{\rd v} \left( a'(v) \int_{-\infty}^v f(t,x,U) \rd U \right)  \rd v \,. \label{eq:new3_weak}
\end{eqnarray*}
Therefore, $f(t,x,U)$ solves \eqref{eqn:PDF_PDE_nl}.
\end{proof}

\begin{remark}[Why do we assume that there are no shocks?]\label{rem:noShock}
Our analysis of nonlinear hyperbolic equations with random data, \eqref{eqn:cl}, is valid only up to times where no shocks have formed (with nonzero probability). We claim that this is an inherent limitation of the problem, rather than the result of a technical difficulty.

First, let us examine why shock is detrimental to our methods of proof. In Theorem \ref{thm:CDF_PDE}, the method of characteristics is used to trace the evolution of the sublevel set $S(t,x,U)$ in time. When characteristic lines cross and a shock forms, such argument can no longer hold: even if $u(0,x-a'(U)t;\omega)=U$, whether or not $u(t,x;\omega)=U$ depends on where (in $(t,x)$) the shock is, which in turn requires full knowledge of  $x\mapsto u(0,x;\omega)$. But the latter information is not encoded in $\phi_{0,x}$. The importance of multi-point (in $x$) correlations is reminiscent of the work of \cite{fjordholm2017statistical}, where (for systems of nonlinear hyperbolic conservation laws) a hierarchy of $k$-points statistics for all $k\geq 1$ is needed to determine the statistical solution. In the proof of Theorem \ref{thm:PDF_PDE_nl}, already in \eqref{eqn:4} we require $\partial_x u(t,x;\omega)$ to exist in a strong sense, an assumption that fails in the presence of a shock.\footnote{A {\em weak form} of that derivation does not seem to advance us any further, unless the shock is known to form almost surely at a fixed location. We find these assumptions too stringent and therefore do not pursue this direction here.} 

Beyond that, {\em the no-shock condition is fundamental to Question \ref{q:main}}. Our {\em numerical results} in Sec.\ \ref{sec:ShockExample} demonstrate that two distinct initial random fields $u(0,x;\omega)$ and $v(0,x;\omega)$ may have the same initial pointwise statistics (i.e., $\phi^v_{0,x}=\phi^u_{0,x}$ for all $x\in \RR$), but as shocks form at different points in $(t,x,\omega)$ space for $u$ and $v$, the pointwise statistics differ as well. Therefore, in general there can be no operator or algorithm which takes the measure-valued function (or Young measure) $x\mapsto \phi_{0,x}$ and returns $x\mapsto \phi_{T,x}$, if shocks form with non-zero probability for some time $t\in [0,T)$.
\end{remark}


\section{Linear Hyperbolic systems}\label{sec:sys}

Consider a linear $2\times 2$ hyperbolic system in one space variable:
\begin{equation}\label{eq:2sys_gen}
\begin{cases}{}
\vec z_t (t,x;\omega) + D(x) \vec z_x = 0 \, ,
\\  \vec z(0,x,\omega) =  \vec z_{(0,x)}(\omega)  \, , 
\end{cases}
\end{equation}
where $x\mapsto D(x)$ is a smooth map from $\RR $ to the space of $2\times 2$ matrices, and each $D(x)$ is real diagonalizable.
\subsection{The Failure of one-point statistics}\label{sec:failure_systems}
In these settings, one can define the pointwise statistics $\phi_{t,x}$ in an analogous way to the scalar case 
$$ \phi_{t,x}(A) \equiv \mu \left(\right\{ \omega ~~ | ~~ \vec z (t,x;\omega) \in A \left\} \right) \, ,$$
for every measurable $A \subseteq \RR ^2$. Unfortunately, $\phi_{t,x}$ is in general {\em not uniquely determined by the initial pointwise statistic } $x\mapsto \phi_{0,x}$. Hence, there can be no hope to find the time-propagator of $\phi_{t,x}$ (or its density and CDF).  We show this using the following explicit counter-example:
\begin{prop}\label{prop:sys_bad}
    In a vector-valued systems such as \eqref{eq:2sys_gen}, the initial joint pointwise statistics $\{\phi_{0,x}\}_{x\in \RR}$ are not sufficient to determine $\{\phi_{t,x}\}_{x\in \RR}$ for $t>0$.
\end{prop}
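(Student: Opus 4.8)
The plan is to construct two distinct random fields solving a linear $2\times2$ hyperbolic system which share the same one-point statistics at $t=0$ but develop different one-point statistics at some later time. To keep computations trivial, I would work with constant, already-diagonal coefficients, say $D(x)\equiv \mathrm{diag}(1,-1)$, so that the system decouples into two transport equations $p_t+p_x=0$ and $q_t-q_x=0$, with explicit solutions $p(t,x;\omega)=p_0(x-t;\omega)$ and $q(t,x;\omega)=q_0(x+t;\omega)$. The pointwise statistics $\phi_{t,x}$ is then the joint law of the pair $\bigl(p_0(x-t;\omega),q_0(x+t;\omega)\bigr)$, which depends on the \emph{joint} law of $p_0$ at the point $x-t$ and $q_0$ at the point $x+t$ — two \emph{different} spatial locations once $t>0$. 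This is precisely the spatial-correlation information that $\{\phi_{0,x}\}_x$ does not retain, and that is the mechanism behind the counterexample.

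Concretely, I would take a single scalar random variable $\omega$ (e.g. $\Omega=\{\pm1\}$ with equal mass, or $\Omega=[0,1]$ uniform) and choose two pairs of initial data. For the first field, let $p_0(x;\omega)$ and $q_0(x;\omega)$ be chosen so that at every fixed $x$ the pair is, say, independent (or comonotone), while for the second field $\tilde p_0,\tilde q_0$ keep the \emph{same marginals at each $x$} but arrange the dependence differently. The cleanest version: let $p_0(x;\omega)=g(x)+\omega$ and $q_0(x;\omega)=h(x)+\omega$ versus $\tilde p_0(x;\omega)=g(x)+\omega$ and $\tilde q_0(x;\omega)=h(x)-\omega$, where $g,h$ are fixed smooth non-constant functions and $\omega$ is symmetric. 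Since $\omega$ and $-\omega$ have the same law, $\phi_{0,x}=\tilde\phi_{0,x}$ for every $x$ (both are the law of $(g(x)+\omega,h(x)\pm\omega)$, and symmetry makes these equal). But at time $t>0$ the first field gives the law of $(g(x-t)+\omega,\,h(x+t)+\omega)$ while the second gives the law of $(g(x-t)+\omega,\,h(x+t)-\omega)$; these are the laws of $(A+\omega,B+\omega)$ versus $(A+\omega,B-\omega)$ with $A=g(x-t)$, $B=h(x+t)$ constants, which are genuinely different (the first is supported on a line of slope $+1$ through $(A,B)$ shifted by $\omega$, the second on slope $-1$) as soon as the distribution of $\omega$ is nondegenerate. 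I would verify the difference by exhibiting a test functional, e.g. $\mathbb{E}[(p-A)(q-B)]=\mathrm{Var}(\omega)\neq -\mathrm{Var}(\omega)$, or by comparing the covariance of the two measures.

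The main steps, in order, are: (i) fix the simple system $\vec z_t+D\vec z_x=0$ with $D=\mathrm{diag}(1,-1)$ and write the explicit characteristic solutions; (ii) define the two initial random fields $\vec z_{0,x}$ and $\tilde{\vec z}_{0,x}$ as above and check, using the symmetry of the law of $\omega$, that $\phi_{0,x}=\tilde\phi_{0,x}$ for all $x$; (iii) compute $\phi_{t,x}$ and $\tilde\phi_{t,x}$ from the pushforward formula and the explicit solutions; (iv) pick a point $(t,x)$ with $t>0$ and $g(x-t),h(x+t)$ such that the two resulting two-dimensional laws are distinguished by their covariance (any generic choice works), concluding $\phi_{t,x}\neq\tilde\phi_{t,x}$. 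One should also note that both initial data can be taken piecewise-$C^1$ (indeed smooth) in $x$, so the example lies comfortably within the regularity class of the theory.

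I do not expect a serious obstacle: the construction is essentially forced by the observation that $\phi_{t,x}$ couples $p_0$ and $q_0$ at two \emph{different} points. The only mild care needed is in the bookkeeping of which marginal sits at which location and in confirming that the symmetry $\omega\sim-\omega$ really does make the two initial Young measures coincide pointwise while the two time-$t$ Young measures differ — i.e. that changing the sign of $\omega$ in one component is invisible at $t=0$ (because the two components are evaluated at the \emph{same} $x$, so the joint sign flip $(\omega,\omega)\mapsto(\omega,-\omega)$ is, after also flipping $\omega\mapsto-\omega$, the identity in law) but visible at $t>0$ (because the components are evaluated at \emph{different} points and the global sign flip no longer reconciles them). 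Making that one-line symmetry argument airtight is the crux; everything else is a direct computation.
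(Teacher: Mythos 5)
Your overall strategy coincides with the paper's: take the decoupled wave system, construct two random fields whose one-point (joint) laws agree at $t=0$, and show they disagree at some $(t,x)$ with $t>0$ because the flow couples the two components at two distinct spatial locations. The paper does exactly this, using a discrete two-point sample space and a table of values specified at $x=\pm1$.

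However, your concrete construction has a genuine flaw. Taking $p_0(x;\omega)=g(x)+\omega$, $q_0(x;\omega)=h(x)+\omega$ versus $\tilde p_0(x;\omega)=g(x)+\omega$, $\tilde q_0(x;\omega)=h(x)-\omega$, you assert that a symmetric $\omega$ makes $\phi_{0,x}=\tilde\phi_{0,x}$. This is false: $\phi_{0,x}$ is the law of $(\omega,\omega)$ (a measure supported on the diagonal $\{V=U\}$, translated by $(g(x),h(x))$), whereas $\tilde\phi_{0,x}$ is the law of $(\omega,-\omega)$ (supported on the anti-diagonal $\{V=-U\}$). Applying $\omega\mapsto-\omega$ to the second sends $(\omega,-\omega)$ to $(-\omega,\omega)$ — still on the anti-diagonal, not the diagonal — so symmetry of $\omega$ does not reconcile them. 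Indeed, $\operatorname{Cov}(p_0(x),q_0(x))=\operatorname{Var}(\omega)\neq -\operatorname{Var}(\omega)=\operatorname{Cov}(\tilde p_0(x),\tilde q_0(x))$ whenever $\omega$ is nondegenerate, the very test you later apply at $t>0$. So these two fields already differ in their pointwise statistics at $t=0$, and the counterexample collapses. The fix is to keep the per-$x$ joint law of $(p_0(x),q_0(x))$ literally identical for the two fields while altering how the $\omega$-dependence is distributed \emph{across} different $x$ — e.g., swap which atom of $\Omega$ realizes which outcome at one spatial location but not another. That is precisely what the paper's Table 1 arranges (both fields give the same joint law at each of $x=\pm1$, but field 1 correlates the nonzero values across $x=\pm1$ and field 2 anti-correlates them), and it is the key combinatorial step your proposal is missing.
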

\begin{proof}
Consider the one-dimensional wave equation in system form:
$$\begin{cases}{}
u_t (t,x;\omega) + u_x = 0 \, ,
\\
v_t (t,x;\omega) -v_x = 0 \, ,\\
 u(0,x,\omega) =  u_{(0,x)}(\omega)  \, , \\
 v(0,x,\omega) =  v_{(0,x)}(\omega)  \, .
\end{cases}$$
To show that the initial joint statistics $\phi_{0,x}$ do not determine the $\phi_{t,x}$ at later times, it is enough to provide two distinct sets of random initial conditions 
for which the initial joint pointwise statistics are the same, but the joint pointwise statistics at $t=1$ are different.

\begin{table}
    \centering
    \begin{tabular}{c||c|c|}
         $\omega$& $0$ & $1$ \\ \hline \hline
         $u^{(1)}(0,1;\omega)$ & $1$   & $0$\\
         $u^{(1)}(0,-1;\omega)$ & $0$ &  $0$\\
         $v^{(1)}(0,1;\omega)$ & $0$ & $0$\\
         $v^{(1)}(0,-1;\omega)$ & $1$ &  $0$\\
         \hline \hline   $u^{(2)}(0,1;\omega)$ & $1$ & $0$\\
         $u^{(2)}(0,-1;\omega)$ & $0$ & $0$\\
         $v^{(2)}(0,1;\omega)$ & $0$ & $0$\\
         $v^{(2)}(0,-1;\omega)$ & $0$ & $1$\\
    \end{tabular}
    \caption{Two different sets of random initial conditions, $(u^{(1)}, v^{(1)})$ and $(u^{(2)}, v^{(2)})$, prescribed at $x=\pm 1$ for all possible values of $\omega$.}
    \label{tab:ic_sys_ce}
\end{table}

Take $\Omega = \{0,1\}$ with the uniform measure ($\mu(0)= \mu(1)= 1/2$ ). We define the initial conditions,  $(u^{(1)}, v^{(1)})$ and $(u^{(2)}, v^{(2)})$, in Table \ref{tab:ic_sys_ce}. Note that we only specify the initial conditions at $x=\pm 1$. This is because these are the only points we need to arrive at a contradiction, and any value they assume at $x\neq \pm 1$ would not change the argument.

The corresponding pointwise measures can be directly computed to show that
$$\phi^{(1)}_{0,\pm 1} = \phi^{(2)}_{0,\pm 1} \, ,$$
as both measures assign a probability $1/2$ to a each of the two point $(1,0),(0,0) \in \RR^2$ at $x=1$, and probability $1/2$  to $(0,1),(0,0)\in \RR^2$  at $x=-1$. Hence, we established that the initial pointwise statistics of both sets of initial conditions are the same.

Now observe that the solutions to both equations are simple traveling waves in the opposite directions, i.e., $u(t,x;\omega) = u(0,x-t;\omega)$ and $v(t,x;\omega)=v(0,x+t;\omega)$ for all $t\geq 0$, $x\in \RR$ and $\omega \in \Omega$. Hence $$u^{(1)}(1,0;0)=v^{(1)}(1,0;0) = 1\, , \qquad  u^{(1)}(1,0;1)=v^{(1)}(1,0;1) = 0 \, , $$ and so $\phi_{1,0}^{(1)}(1,1)= \phi_{1,0}^{(1)} = \frac12 $. 
At the same time, the analogous calculation shows that $\phi_{1,0}^{(2)}(1,1) = 0$, which shows that $\phi^{(1)}_{1,0} \neq \phi^{(2)}_{1, 0} $,
which concludes our proof.

\end{proof}
\subsection{Two-point statistics}\label{sec:2pt}
The counterexample above demonstrates the issue with propagating $\phi_{t,x}$, but suggests the appropriate fix: in vector-valued systems, two-point correlations (in space) are significant. This motivates the following definition: denote $\vec z= (z_1, z_2)^\top$, and by a small abuse of notation, define the two-point statistics as
\begin{equation}\label{eq:2ptPhi}
    \phi_{t,x,y}(A) \equiv \mu \left( \left\{  \omega \in \Omega ~~ | ~~ \begin{pmatrix}
        z_1 (t,x;\omega) \\ z_2 (t,y;\omega)
    \end{pmatrix} \in A \right\} \right) \, , 
\end{equation}
for all measurable $A\subseteq \RR^2$. Alternatively, we can define the measure $\phi_{t,x,y}$ as a functional acting on test functions $\psi : \RR^2 \to \RR$
$$ \phi_{t,x,y} [\psi] = \int\limits_{\Omega} \psi\left(z_1(t,x;\omega), z_2(t,y;\omega) \right) \, \rd \mu(\omega) \, .$$
In what follows, we make the following assumption:
\begin{assumption}\label{as:P_x0}
The matrix $D(x)$ in \eqref{eq:2sys_gen} admits an $x$-independent diagonalization, i.e., there exists an invertible matrix $P$ and two functions, $c_1(x, u)$ and $c_2(x,v)$, such that for all $x \in \RR$
$$PD(x)P^{-1}=\begin{pmatrix}
    c_1(x) & 0 \\ 0 & c_2 (x)
\end{pmatrix} \, .  $$
\end{assumption}
\begin{remark}
   Consider the diagonalized system 
\begin{equation}\label{eq:2sys}
\begin{cases}{}
u_t + c_1(x) u_x = 0,
\\ v_t + c_2(x) v_x = 0,
\\ u(0,x,\omega) = u_{(0,x)}(\omega), \quad 
v(0,x,\omega) = v_{(0,x)}(\omega) \,,
\end{cases}
\end{equation}
and suppose that we are given the two-point statistics {\em for the new coordinates $u$ and $v$,} which for the sake of this remark we shall denote by $\tilde{\phi}_{t,x,y}$. The one-point statistics $\phi_{t,x}$ of $\vec z$ can be obtained, for any $A\subseteq \mathbb{R}^2$, using the change of variables 
\begin{align*}
  \phi_{t,x}(A) &\equiv \mu \left\{ \vec z (t,x;\omega)\in A \right\}  \\
  &= \mu \left\{ (
      u(t,x;\omega), v(t,x;\omega)
  )^{\top}\in P^{-1} A \right\} \\
  &=\tilde{\phi}_{t,x,x}(P^{-1}(A)) \, ,
\end{align*}
where $P^{-1}(A)=\{P^{-1}\vec \zeta ~~ |~~\vec \zeta \in A \}$.

\end{remark}

\noindent
Hence, under Assumption \ref{as:P_x0}, we need only to study $\phi_{t,x,y}$ for the decoupled system~\eqref{eq:2sys}.
\begin{theorem}\label{thm:2sys}
   Consider \eqref{eq:2sys}  with Assumption \ref{as:P_x0}). Let $\phi_{t,x,y}$ be the two-points statistics, defined for every $A\subseteq \RR^2$, 
   $$ \phi_{t,x,y} (A) \equiv \mu \left(\{\omega \in \Omega~~|~~ (u(t,x,;\omega), v(t,y;\omega))^{\top}\in A \}\right) \, , $$
   and $f(t,x,y,U,V)$ its PDF. If $f(0,x,y,\cdot ,\cdot)= f_0(x,y,\cdot ,\cdot ) \in L^1 (\RR^2)$, then
       \begin{equation*}
               \partial_t f(t,x,y,U,V) + c_1 (x) \partial_x f + c_2 (y)\partial _y f = 0 \, ,
       \end{equation*}
       for all $t\geq 0$ and $(x,y)\in \RR^2$, with the appropriate initial conditions.
\end{theorem}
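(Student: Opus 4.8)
The plan is to mimic the first proof of Theorem~\ref{thm:cxPDF}, working with a weak formulation in the random variable. The key structural fact is that the system \eqref{eq:2sys} is decoupled, so the randomness enters only through the initial data: $u(t,x;\omega)$ solves the scalar transport equation $u_t+c_1(x)u_x=0$ with random datum $u_{(0,x)}(\omega)$, and separately $v(t,y;\omega)$ solves $v_t+c_2(y)v_y=0$ with random datum $v_{(0,y)}(\omega)$. In particular $v(t,y;\omega)$ carries no $x$-dependence and $u(t,x;\omega)$ carries no $y$-dependence; this is exactly what Assumption~\ref{as:P_x0} (already incorporated into \eqref{eq:2sys}) buys us, and it is what makes the two-point object tractable even though $u_{(0,\cdot)}(\omega)$ and $v_{(0,\cdot)}(\omega)$ may be correlated through $\omega$.

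First I would record the defining pushforward identity: for every $h\in C_c^\infty(\RR^2)$,
\begin{equation*}
\int_{\RR^2} f(t,x,y,U,V)\,h(U,V)\,\rd U\,\rd V=\int_\Omega h\big(u(t,x;\omega),\,v(t,y;\omega)\big)\,\rd\mu(\omega).
\end{equation*}
Differentiating in $t$, applying the chain rule on the right, and substituting $u_t=-c_1(x)u_x$ and $v_t=-c_2(y)v_y$ gives
\begin{equation*}
\partial_t\!\int_{\RR^2}\! f\,h\,\rd U\,\rd V=-c_1(x)\!\int_\Omega (\partial_U h)\,u_x\,\rd\mu-c_2(y)\!\int_\Omega (\partial_V h)\,v_y\,\rd\mu.
\end{equation*}
Because $v(t,y;\omega)$ is independent of $x$ we have $\partial_x\big[h(u(t,x;\omega),v(t,y;\omega))\big]=(\partial_U h)\,u_x$, so the first integral equals $\partial_x\int_\Omega h\,\rd\mu=\partial_x\int_{\RR^2} f\,h\,\rd U\,\rd V$; by the symmetric argument the second equals $\partial_y\int_{\RR^2} f\,h\,\rd U\,\rd V$. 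Bringing the spatial derivatives back under the $U,V$-integral and using that $h$ is arbitrary (and density of $C_c^\infty$ in $L^1$), I conclude $\partial_t f+c_1(x)\partial_x f+c_2(y)\partial_y f=0$ pointwise, with initial datum $f(0,x,y,\cdot,\cdot)=f_0(x,y,\cdot,\cdot)$ by definition of $\phi_{0,x,y}$. An equivalent route is via characteristics: letting $\sigma_1(t,x)$, $\sigma_2(t,y)$ be the time-$0$ feet of the characteristics of the two scalar equations, one has $u(t,x;\omega)=u_{(0,\sigma_1(t,x))}(\omega)$ and $v(t,y;\omega)=v_{(0,\sigma_2(t,y))}(\omega)$, hence $\phi_{t,x,y}=\phi_{0,\sigma_1(t,x),\sigma_2(t,y)}$ and $f(t,x,y,U,V)=f_0(\sigma_1(t,x),\sigma_2(t,y),U,V)$; the PDE then follows from the chain rule together with $\partial_t\sigma_i=-c_i\,\partial\sigma_i$. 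I would present the weak-form argument as the main proof, since it parallels the scalar case most closely.

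The main obstacle — really the only one — is the same bookkeeping as in Theorem~\ref{thm:cxPDF}: justifying the interchange of $\partial_t$, $\partial_x$, $\partial_y$ with $\int_\Omega$, and propagating enough regularity (the assumed $f_0\in L^1(\RR^2)$ in $(U,V)$, together with piecewise $C^1$ dependence in $(x,y)$, as in the scalar statement) so that $f(t,\cdot,\cdot,\cdot,\cdot)$ remains in that class for all $t\ge 0$ and every manipulation above is legitimate. As in the scalar proof, this is resolved a posteriori by observing that the transport structure of the derived PDE itself preserves the regularity class, which closes the argument.
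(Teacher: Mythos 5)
Your proof is correct and takes essentially the same route as the paper: a weak formulation in the random variable, chain rule in $t$, substitution of the decoupled transport equations, and the crucial observation that since $v(t,y;\omega)$ carries no $x$-dependence (and $u$ no $y$-dependence), the terms $\int_\Omega(\partial_Uh)u_x\,\rd\mu$ and $\int_\Omega(\partial_Vh)v_y\,\rd\mu$ are exactly $\partial_x$ and $\partial_y$ of $\int_\Omega h\,\rd\mu$, after which density of test functions finishes the argument. Your aside about the characteristics-based alternative also parallels the paper's own two-proof treatment of the scalar case.
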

\begin{proof}

Let $\psi:\RR^2 \to \RR$ be a test function, we write 
\begin{align*}
    \partial_t \iint\limits_{\RR ^2} &f(t,x,y,U,V) \psi (U,V) \, \rd U \rd V = \partial_t \int\limits_{\Omega} \psi \left(u(t,x;\omega), v(t,y;\omega) \right)  \, \rd \mu (\omega) \\
    &= \int\limits_{\Omega} \Big[ \partial_U \psi \left(u(t,x;\omega), v(t,y;\omega) \right)\partial_t u(t,x;\omega) + \partial_V \psi \left(u(t,x;\omega), v(t,y;\omega) \right)\partial_t v(t,y;\omega) \Big]\, \rd \mu (\omega) \\
    &= \int\limits_{\Omega} \Big[ \partial_U \psi \left(u(t,x;\omega), v(t,y;\omega) \right) \left(-c_1 (x) \partial_x u(t,x;\omega) \right) + \partial_V \psi \left(u(t,x;\omega), v(t,y;\omega) \right) \left( -c_2(y)\partial_y  v(t,y;\omega)  \right) \Big]\, \rd \mu (\omega)  \numberthis \label{eq:expanding_2pt} \\   
    &= -c_1 (x) \partial_x \int\limits_{\Omega} \psi \left( u(t,x;\omega) , v(t,y;\omega) \right) \, \rd \mu(\omega) -c_2 (y) \partial_y \int\limits_{\Omega} \psi \left( u(t,x;\omega) , v(t,y;\omega) \right) \, \rd \mu(\omega)   \\
    &= 
    \iint\limits_{\RR ^2} \left( -c_1 (x) \partial_x -c_2(y) \partial _y \right) f(t,x,y,U,V)  \psi (U,V) \, \rd U \, \rd V \, .
\end{align*}
Since this is true for all test functions, this completes the proof.

\end{proof}

What can be said about the general case, i.e., if one omits Assumption \ref{as:P_x0}? Since $D(x)$ is diagonalizable, there exists an $x$-dependent diagonalizing matrix $P(x)$ satisfying $$P(x)D(x)P^{-1}(x)= {\rm diag}(c_1 (x), c_2(x)) \equiv \Lambda (x) \, . $$ Defining $(u,v)^{\top}\equiv P(x)\vec z$, then \eqref{eq:2sys_gen} can be rewritten as
\begin{subequations}\label{eq:2sys_wsource}
\begin{equation}
\begin{cases}{}
u_t (t,x;\omega) + c_1 (x) u_x = g_1(x, u,v) \, ,
\\
v_t (t,x;\omega) +c_2 (x) v_x = g_2 (x, u,v) \, ,\\
 u(0,x,\omega) =  u_{(0,x)}(\omega)  \, , \\
 v(0,x,\omega) =  v_{(0,x)}(\omega)  \, ,
\end{cases} \\
\end{equation}
where the linear source terms are 
\begin{equation}\begin{pmatrix}
    g_1 (x, u,v) \\ g_2 (x, u,v)
\end{pmatrix} \equiv \Lambda(x)P'(x)P^{-1}(x) \begin{pmatrix}
    u\\ v
\end{pmatrix} \, .   
\end{equation}
\end{subequations}
While we do not know how to treat the general case, there is a relaxation of Assumption \ref{as:P_x0} we can consider:
\begin{proposition}\label{prop:2sys_wsource}
    Consider \eqref{eq:2sys_wsource} and assume that $\Lambda(x)P'(x)P^{-1}(x)$ is a diagonal matrix. Then the PDF of the two-points statistics (see \eqref{eq:2ptPhi}) satisfies the following PDE:
$$\partial_t f(t,x,y,U,V) + c_1 (x) \partial _x f +c_2 (y)\partial_y f - \partial_U \left( f\cdot g_1 (x,U) \right) -\partial_V \left( f\cdot g_2 (y,V) \right)=0\, .$$
\end{proposition}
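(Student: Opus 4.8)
The plan is to run the same weak-in-$\omega$ argument used to prove Theorem~\ref{thm:2sys}, the only genuinely new feature being that the linear source terms in \eqref{eq:2sys_wsource} contribute, after an integration by parts in the state variables, the divergence-form terms in $U$ and $V$. Fix $t\geq 0$ and $(x,y)\in\RR^2$, take a test function $\psi\in C_c^\infty(\RR^2)$, and start from the pushforward identity
\[
\iint_{\RR^2} f(t,x,y,U,V)\,\psi(U,V)\,\rd U\,\rd V = \int_\Omega \psi\bigl(u(t,x;\omega),v(t,y;\omega)\bigr)\,\rd\mu(\omega),
\]
which holds because $\phi_{t,x,y}$ is the pushforward of $\mu$ under $\omega\mapsto(u(t,x;\omega),v(t,y;\omega))$ (see \eqref{eq:2ptPhi}). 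Differentiating in $t$, applying the chain rule, and substituting $\partial_t u=-c_1(x)u_x+g_1$ and $\partial_t v=-c_2(y)v_x+g_2$ from \eqref{eq:2sys_wsource} splits the right-hand side into two transport pieces and two source pieces.

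The transport pieces, $-c_1(x)\int_\Omega\partial_U\psi\,u_x\,\rd\mu$ and $-c_2(y)\int_\Omega\partial_V\psi\,v_x\,\rd\mu$, are handled exactly as in the proof of Theorem~\ref{thm:2sys}: since $v(t,y;\omega)$ does not depend on $x$ one has $\partial_x[\psi(u,v)]=\partial_U\psi\cdot u_x$ (and symmetrically in $y$), so after pulling $\partial_x$ and $\partial_y$ through the $\omega$-integral these pieces become $\iint(-c_1(x)\partial_x f)\psi$ and $\iint(-c_2(y)\partial_y f)\psi$. For the source pieces, the standing hypothesis of the proposition is precisely what is needed: because $\Lambda(x)P'(x)P^{-1}(x)$ is diagonal, $g_1$ depends only on $(x,u)$ and $g_2$ only on $(y,v)$, so $g_1\bigl(x,u(t,x;\omega)\bigr)$ and $g_2\bigl(y,v(t,y;\omega)\bigr)$ are functions of exactly the two random variables that $\phi_{t,x,y}$ tracks. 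Hence $\int_\Omega\partial_U\psi\,g_1\,\rd\mu=\iint f\,\partial_U\psi\,g_1(x,U)\,\rd U\,\rd V$, and integrating by parts in $U$ (no boundary term, $\psi$ being compactly supported) produces the divergence-form contribution involving $\partial_U\bigl(f\,g_1(x,U)\bigr)$; the $g_2$ piece is treated symmetrically in $V$. Collecting the four contributions and discarding the arbitrary test function yields the asserted equation.

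I do not expect a real obstacle here: once Theorem~\ref{thm:2sys} is in hand this is a routine Liouville/continuity-type balance. The point worth underlining — and the reason the diagonality of $\Lambda P'P^{-1}$ cannot simply be dropped, cf.\ Remark~\ref{rem:2sysPx} and the discussion of the general system \eqref{eq:2sys_wsource} — is that in the general case the source driving $u$ at spatial location $x$ is $g_1\bigl(x,u(t,x;\omega),v(t,x;\omega)\bigr)$, which involves $v(t,x;\omega)$: a value at the point $x$ rather than $y$, and therefore \emph{not} a function of the pair $(u(t,x;\omega),v(t,y;\omega))$ recorded by $\phi_{t,x,y}$. Diagonality removes this off-diagonal coupling and is exactly what lets the weak formulation close. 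As elsewhere in the paper, one also tacitly assumes enough regularity (e.g.\ $f_0$ piecewise $C^1$ in $(x,y)$, as in Theorem~\ref{thm:2sys}) to justify differentiating under the integral sign and commuting $\partial_x,\partial_y$ with $\int_\Omega$; the resulting PDE then propagates that regularity forward in time.
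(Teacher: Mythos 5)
Your proposal is correct and follows essentially the same route as the paper's proof: both treat the transport terms exactly as in Theorem~\ref{thm:2sys}, and both use the diagonality of $\Lambda(x)P'(x)P^{-1}(x)$ to make $g_1$ depend only on $(x,U)$ and $g_2$ only on $(y,V)$, so that the pushforward identity and an integration by parts in $U$ (resp.\ $V$) produce the divergence-form source terms. Your closing observation on why diagonality is necessary is precisely the content of Remark~\ref{rem:2sysPx}.
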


 \begin{proof}
In expanding $\int_{\Omega} \partial_U \psi \partial _t u \, \rd \mu (\omega)$ and $\int_{\Omega} \partial_V \psi \partial _t v \, \rd \mu (\omega)$, two new terms are added to \eqref{eq:expanding_2pt}:
\begin{align}\label{eq:expand_sources}
&\int\limits_{\Omega} \partial_U \psi \left( u(t,x;\omega), v(t,y;\omega) \right) \, g_1 \left(x, u(t,x;\omega), v(t,x;\omega) \right) \, \rd \mu(\omega)\\
+ &\int\limits_{\Omega} \partial_V \psi \left( u(t,x;\omega), v(t,y;\omega) \right) \, g_2 \left(y, u(t,y;\omega), v(t,y;\omega) \right) \, \rd \mu(\omega)  \, . \nonumber
\end{align}
Using the assumption that $\Lambda(x)P'(x)P^{-1}(x)$ is diagonal, \eqref{eq:expand_sources} simplifies to
\begin{align*}
    &\int\limits_{\Omega} \partial_U \psi \left( u(t,x;\omega), v(t,y;\omega) \right)  g_1 \left(x, u(t,x;\omega) \right) \, \rd \mu(\omega) + \int\limits_{\Omega} \partial_V \psi \left( u(t,x;\omega), v(t,y;\omega) \right)  g_2 \left(y,  v(t,y;\omega) \right) \, \rd \mu(\omega) \\
    &= \iint\limits_{\RR ^2}  \partial_U \psi (U,V)   g_1 \left(x, U \right) f(t,x,y,U,V)\, \rd U \rd V  + \iint\limits_{\RR ^2} \partial_V \psi (U,V) \cdot g_2( y,V)  f(t,x,y,U,V)\, \rd U \rd V  \\
    &= -\iint\limits_{\RR ^2}   \psi (U,V)  \partial_U  \left[ g_1 \left(x, U \right) f(t,x,y,U,V) \right]\, \rd U \rd V  - \iint\limits_{\RR ^2}  \psi (U,V) \partial_V \left[ g_2( y,V)  f(t,x,y,U,V)\right]\, \rd U \rd V   \, ,
\end{align*}
where the last passage is due to integration by parts and the vanishing of the boundary terms. Finally, since these passages hold for all test functions $\psi$, we get the PDE above.
 \end{proof}  
\begin{remark}\label{rem:2sysPx}
The proof of Proposition \ref{prop:2sys_wsource} also reveals the difficulty with \eqref{eq:2sys_wsource} when  $\Lambda(x)P'(x)P^{-1}(x)$ is not diagonal: since $g_1$ depends on $V$ (and analogously $g_2$ depends on $U$), then \eqref{eq:expand_sources} depends on $v(t,x;\omega)$ in a non-trivial way. Given the definition of the two points statistics $\phi_{t,x,y}$ (and its PDF $f$), it does not convey information about the joint probability of $u(t,x;\omega)$, $v(t,y;\omega)$ and $v(t,x;\omega)$, and therefore we cannot proceed further.
\end{remark}

Our proofs above generalize without any change to the case of $N$-systems of linear hyperbolic equations.
\begin{proposition}\label{prop:Npoints}
    Consider a system of $N>1$ linear hyperbolic equations with random initial data
    \begin{equation}\label{eq:Nsys}
\begin{cases}{}
\vec z_t (t,x;\omega) + D(x) \vec z_x = 0 \, ,
\\  \vec z(0,x,\omega) =  \vec z_{(0,x)}(\omega)  \, , 
\end{cases}
\end{equation}
where $D(x)$ can be diagonalized by an $x-$indepndent matrix $P$, or equivalently a system of the form
\begin{equation*}
\begin{cases}{}
 u^i_t (t,x;\omega) + c_i(x)  u^i_x = 0 \, ,
\\   u^i(0,x,\omega) =  u^i_{(0,x)}(\omega)  \, , 
\end{cases} \, , \qquad 1\leq i \leq N \, .
\end{equation*}
Define the $N$-points pointwise statistics as the Young measure satisfying, for every measurable $A\subseteq \RR^N$,
$$\phi_{t,x_1,\ldots, x_N} (A) \equiv \mu \left(\left\{\omega \in \Omega ~~ | ~~ \left( u^1 (t,x_1;\omega), \ldots, u^N(t,x_N;\omega) \right)^{\top}\in A \right\}\right) \, .$$
Then the density $f(t,x_1,\ldots ,x_N, U_1, \ldots, U_N)$ satisfies the PDE 
\begin{equation}\label{eq:highdimfsys}
    \partial_t f(t,x_1,\ldots, x_N, U_1, \ldots, U_N) + \sum\limits_{i=1}^N c_i (x_i)\partial_{x_i}f = 0 \, .
\end{equation}
\end{proposition}

\revO{The appearance of an $(N+1)$-dimensional PDE in Proposition \ref{prop:Npoints} is not an artifact of the method, but rather unavoidable, as demonstrated by the counterexample in Section \ref{sec:failure_systems} From a computational perspective, solving \eqref{eq:highdimfsys} becomes increasingly challenging as $N$ grows, due to the well-known curse of dimensionality. For moderate dimensions, such as $N=2,3,4$, which arise in many applications,  \eqref{eq:highdimfsys}  can be efficiently treated using deterministic approaches, including sparse grid discretizations; see, e.g.,
\cite{Griebel2001,BungartzGriebel2004,GriebelVogt2010,Lubich2015}. For very large values of $N$, however, scalable and highly parallelizable Monte Carlo, particle-based, or conversely surrogate model approaches to \eqref{eq:Nsys}  provide the only practical computational alternatives; see, e.g., \cite{Spohn1991,JinPareschiToscani2000,Golse2016,KochLubich2007}.} 

\section{Regularity of the PDF and density estimation}\label{sec:reg}
Kernel Density Estimators (KDEs) are a class of nonparametric statistical methods for density estimation. As mentioned in the introduction, for random initial value problems such as \eqref{eq:genSet}, using KDEs (or any other Monte Carlo-based method) is a standard technique, especially when the dimension of the parameter space, $\Omega$, is large. Here, we ask
\begin{question}\label{q:kde}
    Given $N$ samples from $\mu$, and solving \eqref{eq:genSet} to obtain $u(t,x;\omega)$ for each such sample, what is the expected accuracy of a KDE in estimating $f(t,x;U)$ or $F(t,x;U)$?
\end{question}

Let us first briefly recall what KDEs are (see \cite{izenman1991review, tsybakov2009nonparametric, wasserman2006all} for broader expositions). Let $p\in L^1(\RR) $ be a PDF, let $K\in L^1(\RR)$ be a non-negative {\em kernel} function, and choose a {\em bandwidth} parameter $h>0$. Then, given $N\geq 1$ identically and independently drawn (i.i.d.) samples $y_1, \ldots y_N \sim p$, the associated KDE is 
\begin{equation}\label{eq:kde}
\hat{p}(y) = \hat{p}^{N,K,h}(y) \equiv \frac{1}{N} \sum\limits_{j=1}^N K\left(\frac{y-y_j}{h} \right)\, , 
\end{equation}
for all $y\in \RR$. The expected error of $\hat{p}$ is a standard result \cite[Theorem 1.1]{tsybakov2009nonparametric}, presented here in simplified form:
\begin{theorem}[Tsybakov \cite{tsybakov2009nonparametric}]\label{thm:tsybakov}
    Let $p$ be a PDF with $\beta\geq 1$ continuous derivatives and $\|p^{(\beta)}\|_{\infty}<L$, choose a kernel $K\in L^2( \RR)$ with $\int_{\RR} K(u)\, du =1$, and  any $\alpha >0$. Given $N$ i.i.d., samples from the target, choose $h= h(N;\alpha)= \alpha N^{-1/(2\beta +1)}$. Then\begin{equation}\label{eq:tsybakov} \sup\limits_{y\in \RR}\mathbb{E}|p(y)-\hat{p}(y)|^2 \leq C N^{-\frac{2\beta}{2\beta+1}} \, , \qquad C=C(\alpha, \beta ,L, K) >0 \, ,
    \end{equation}
    where the expectations are with respect to the realizations of the samples. Furthermore, for any fixed $\alpha>0$ the error rate in \eqref{eq:tsybakov} as a function of $N$ is optimal.
\end{theorem}

Hence, to apply \eqref{eq:tsybakov} to our settings, we need to know the regularity of $f(t,x,U)$ (or $F(t,x,U)$) a-priori.
  Given a general initial-value problem with random initial conditions, however, it is not trivial to a-priori guarantee the regularity of the PDF or CDF of the solution. Such guarantees can only be obtained for particular equations, see e.g., \cite{cohen2010convergence, jahnke2022multilevel} for results on the regularity of the solution with respect to the parameters. We show next that for hyperbolic conservation laws, Theorems \ref{thm:cxPDF} and \ref{thm:CDF_PDE} provide the necessary regularity guarantees, and therefore a quantitative answer to Question \ref{q:kde} which only depends on $\phi_{0,x}$.

Consider first the linear scalar settings of \eqref{eq:linScalar}. The following regularity statement is a direct corollary of Theorem \ref{thm:cxPDF}.
\begin{lemma}\label{lem:regLin}
    Under the settings of  Theorem \ref{thm:cxPDF}, if $f_0(x,\cdot) \in C^{\beta}(\RR_U)$ for some $\beta\in \mathbb{N}$ and for all $x\in \RR$, then $f(t,y,\cdot)\in C^{\beta} (\RR_U)$ for all $t,y\in \RR$.
\end{lemma}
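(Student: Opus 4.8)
The plan is to exploit the explicit representation of the solution to the transport equation \eqref{eq:PDF_utcxux} obtained in the second proof of Theorem \ref{thm:cxPDF}, namely $f(t,x,U) = f_0(q_t^{-1}(x),U)$, where $q_t$ is the flow map of the characteristic ODE \eqref{eq:cx_char}. Since regularity in $U$ is the only thing at stake here (the characteristic map $q_t^{-1}$ acts purely on the $x$-variable), the result should follow by simply observing that precomposition in the $x$-slot does not touch the $U$-dependence.

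Concretely, first I would recall from the characteristics proof that for each fixed $t$, the map $x \mapsto q_t^{-1}(x)$ is a well-defined (bijective, $C^1$) function of $x$ alone, determined by $c \in C^1(\RR)$ via the ODE for $\xi_x(t)$. Then for any fixed $t, y \in \RR$, set $x_0 = q_t^{-1}(y)$, so that $f(t,y,U) = f_0(x_0, U)$ for all $U$. The hypothesis gives $f_0(x_0, \cdot) \in C^{\beta}(\RR_U)$, and hence $\partial_U^k f(t,y,U) = \partial_U^k f_0(x_0,U)$ exists and is continuous in $U$ for every $0 \le k \le \beta$. This immediately yields $f(t,y,\cdot) \in C^{\beta}(\RR_U)$, completing the argument.

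The only genuine point to check — and it is where I would be slightly careful — is the \emph{joint} measurability/regularity issue implicit in invoking the representation formula: Theorem \ref{thm:cxPDF}'s second proof establishes $f(t,x,U) = f_0(q_t^{-1}(x),U)$ pointwise, so the conclusion for the $U$-slice at a fixed $(t,y)$ is really a triviality once that formula is in hand. There is no PDE smoothing or loss of regularity to fight, since a linear transport equation transports initial data along characteristics without regularizing it; in particular it cannot \emph{improve} regularity in $U$ either, but that is not claimed. I therefore do not expect any substantive obstacle — the lemma is essentially a restatement of the characteristic formula read off in the $U$-variable, and the proof is a few lines: invoke Theorem \ref{thm:cxPDF} (or its characteristic-based proof) for the representation, fix $(t,y)$, reduce to the initial slice at $x_0 = q_t^{-1}(y)$, and apply the hypothesis on $f_0$.
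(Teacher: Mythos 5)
Your argument is essentially the same as the paper's: both rely on the characteristic representation $f(t,x,U) = f_0(x_0(t,x),U)$ and on the observation that the characteristic flow acts only in the $x$-variable (since $c(x)$ does not depend on $U$), so that $U$-regularity of the initial slice passes through unchanged. The paper's version additionally notes that $x_0(t,x)$ is continuous in $x$, so that $\|f(t,x,\cdot)\|_{C^\beta(\RR_U)}$ cannot blow up in finite time, a refinement used later in the KDE error bound but not strictly needed for the lemma as stated.
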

    
\begin{proof}
    Note that solutions to \eqref{eq:PDF_utcxux} are traveling waves in $t$ and $x$. While the speed $c(x)$ might not be constant, it does not depend on $U$. Hence, for every $t,x \in \RR$ there exists $x_0=x_0 (t,x)\in \RR$ such that $f(t,x ,U) = f_0 (x_0,U)$. Note further that $x_0= x_0(t,x)$ is determined by the method of characteristics, and therefore (by regularity theory of ODEs) it is a continuous function of $x$ for every fixed $t \geq  0$. Therefore, while  $\|f(t,x,\cdot)\|_{C^r(\RR_U)}$ can generally depend on $t$, it cannot blow-up in finite time.
\end{proof}

With this regularity result and the statistical theory of \eqref{eq:tsybakov}, we can  establish the statistical approximation error of applying KDEs to compute $f(t,x,U)$ in the linear scalar settings of \eqref{eq:linScalar}; 
\begin{proposition}\label{prop:KDE_rate}
    Suppose $f_0(\cdot,U) \in C^1(\RR _x)$ for all $U\in \RR$, and that there exists $\beta \in \mathbb{N}$ and $L>0$ such that $$\sup\limits_{x\in \RR} \sup\limits_{U\in \RR}\left| \partial_U^{(\beta)} f_0(x;U)\right| < L \, .$$  Let $\hat{f} (t,x,\cdot)$ be the KDE of $f(t,x;\cdot)$ using $N\geq 1$ i.i.d.\ samples from $\mu$ and solving \eqref{eq:linScalar} for each sample, see \eqref{eq:kde}. Then, for every $x,U\in \RR$ and every $T\geq 0$, there exists $C(T)\geq 0$ such that     \begin{equation*}
    \sup\limits_{x\in \RR, t\in [0,T]} \sup\limits_{U\in \RR_U} \mathbb{E}|f(t,x;U) - \hat{f} (t,x;U ) |^2 \leq C(T) N^{-\frac{2\beta}{2\beta+1}} \, .
    \end{equation*}
\end{proposition}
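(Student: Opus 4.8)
\textbf{Proof proposal for Proposition \ref{prop:KDE_rate}.}

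The plan is to reduce the statement to the abstract KDE estimate \eqref{eq:tsybakov} by verifying its hypotheses uniformly in $(t,x)$ on the slab $[0,T]\times\RR$. First I would recall from Theorem \ref{thm:cxPDF} and the method-of-characteristics representation in its alternative proof that $f(t,x,U) = f_0(q_t^{-1}(x),U)$, where $q_t$ is the (invertible) time-$t$ flow map of the characteristic ODE \eqref{eq:cx_char}. The crucial structural observation, already exploited in Lemma \ref{lem:regLin}, is that the characteristic foot-point $x_0 = x_0(t,x) := q_t^{-1}(x)$ depends only on $(t,x)$ and \emph{not} on $U$. Consequently, for each fixed $(t,x)$ the function $U\mapsto f(t,x,U)$ is literally equal to $U\mapsto f_0(x_0,U)$, so it has exactly the same $U$-regularity as the initial PDF: it lies in $C^{\beta}(\RR_U)$, and moreover
\[
\sup_{t\in[0,T]}\sup_{x\in\RR}\sup_{U\in\RR}\bigl|\partial_U^{(\beta)} f(t,x,U)\bigr|
= \sup_{x_0\in\RR}\sup_{U\in\RR}\bigl|\partial_U^{(\beta)} f_0(x_0,U)\bigr| < L\,,
\]
by the hypothesis on $f_0$. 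This bound is uniform in $(t,x)$ — indeed it is independent of $T$ — which is exactly what is needed.

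Next I would fix $(t,x)\in[0,T]\times\RR$ and apply the cited KDE theorem (the displayed \textbf{Theorem*} preceding Question \ref{q:kde}) to the single PDF $p := f(t,x,\cdot)$. The samples are obtained as prescribed: draw $\omega_1,\dots,\omega_N\sim\mu$ i.i.d., solve \eqref{eq:linScalar} for each, and record $y_j := u(t,x;\omega_j)$; by the defining pushforward property \eqref{eq:phiDef} these are i.i.d.\ samples from $\phi_{t,x}$, i.e.\ from the PDF $p = f(t,x,\cdot)$, so the KDE $\hat f(t,x,\cdot)$ built from them is precisely $\hat p$ in the sense of \eqref{eq:kde}. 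Choosing the bandwidth $h = \alpha N^{-1/(2\beta+1)}$ for a fixed $\alpha>0$ and the kernel $K$ as in the theorem, \eqref{eq:tsybakov} yields
\[
\sup_{U\in\RR}\mathbb{E}\bigl|f(t,x,U)-\hat f(t,x,U)\bigr|^2 \leq C(\alpha,\beta,L,K)\,N^{-\frac{2\beta}{2\beta+1}}\,,
\]
and the constant $C(\alpha,\beta,L,K)$ depends on $f(t,x,\cdot)$ only through the upper bound $L$ on $\|\partial_U^{(\beta)}f(t,x,\cdot)\|_\infty$ and the value of $\beta$ — both of which, by the previous paragraph, are the same for every $(t,x)\in[0,T]\times\RR$. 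Taking the supremum over $x\in\RR$ and $t\in[0,T]$ therefore leaves the right-hand side unchanged, and setting $C(T) := C(\alpha,\beta,L,K)$ (in fact a constant independent of $T$) completes the argument.

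The only points requiring a little care — and the closest thing to an obstacle — are the two uniformities. First, one must check that the hypotheses of the abstract theorem ($p$ a genuine PDF with $\beta$ continuous derivatives and $\|p^{(\beta)}\|_\infty<L$) hold for \emph{every} $(t,x)$ with a \emph{single} $L$; this is handled by the $U$-independence of $x_0(t,x)$ as above, together with the hypothesis $f_0(\cdot,U)\in C^1(\RR_x)$ (which, via the continuity of $x_0(t,x)$ from ODE theory, guarantees $f(t,x,\cdot)$ is a bona fide element of $L^1$ varying measurably, so that ``the PDF exists'' in the sense of Theorem \ref{thm:cxPDF}). Second, one must note that $C(\alpha,\beta,L,K)$ in \eqref{eq:tsybakov} is \emph{monotone} in (or at least controlled by) the data $(\alpha,\beta,L,K)$ and does not otherwise depend on the particular target density, which is the content of the cited result. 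Given these, the passage to the supremum over the slab is immediate, and no genuinely new estimate is needed beyond what Theorem \ref{thm:cxPDF} and the standard KDE theory already provide.
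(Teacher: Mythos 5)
Your proof is correct and follows essentially the same route as the paper's: reduce to the abstract KDE bound \eqref{eq:tsybakov} by establishing that $\|\partial_U^{(\beta)} f(t,x,\cdot)\|_\infty$ is bounded uniformly over $(t,x)\in[0,T]\times\RR$, using the traveling-wave representation $f(t,x,U)=f_0(x_0(t,x),U)$ from the alternative (method-of-characteristics) proof of Theorem \ref{thm:cxPDF} and Lemma \ref{lem:regLin}.

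One remark worth recording: you actually prove something slightly sharper than the paper does. The paper's proof allows the uniform-in-$x$ regularity bound to be a function $L(t)$ that is ``potentially increasing, but always finite,'' and accordingly lets $C(T)$ depend on $T$. You observe, correctly, that because the characteristic foot-point $x_0(t,x)$ is independent of $U$, one simply has
\[
\sup_{t\ge 0}\sup_{x\in\RR}\sup_{U\in\RR}\bigl|\partial_U^{(\beta)} f(t,x,U)\bigr|
\;\le\; \sup_{x_0\in\RR}\sup_{U\in\RR}\bigl|\partial_U^{(\beta)} f_0(x_0,U)\bigr| \;<\; L\,,
\]
an inequality whose right-hand side carries no $t$-dependence whatsoever (the displayed equality in your write-up should strictly be a ``$\le$,'' since for fixed $t$ the range of $x_0(t,\cdot)$ need not be all of $\RR$, but this only helps you). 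Hence $C(T)$ can in fact be taken to be a constant $C(\alpha,\beta,L,K)$ independent of $T$. The paper's more cautious phrasing is not wrong — a finite increasing $L(t)$ restricted to $[0,T]$ still yields a finite $C(T)$ — but your formulation is tighter and closes the question the paper leaves dangling about whether the interval $[0,T]$ can be replaced by $[0,\infty)$: for the quantity controlling the KDE rate it always can, provided $f$ exists for all time.
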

\begin{proof}
    For any fixed $t\geq 0$ and $x\in \RR$, an upper bound on $\sup_{U\in \RR_U} \mathbb{E}|f(t,x;U) - \hat{f} (t,x;U ) |^2$  is a straightforward application of \eqref{eq:tsybakov} to $p=f(t,x;\cdot)$, combined with the regularity result in Lemma \ref{lem:regLin}. For $t=0$, a uniform bound in  $x$ is a consequence of the fact that $\partial_U^{(\beta)} f_0(x,U)$ is uniformly bounded in $x$ as well. The uniform bound over all $t\in [0,T]$ is a consequence of the fact that the solution $f(t,x;U)$ maintains its regularity for all $t \geq 0$, and satisfies an upper bound of the form $$\sup\limits_{x\in \RR} \sup\limits_{U\in \RR}\left| \partial_U^{(\beta)} f(t,x;U)\right| < L(t) \, ,$$
     for some (potentially increasing, but always finite) $L:[0,\infty) \to [0,\infty)$. This concludes the proof. Whether or not the interval $[0,T]$ can be taken to be $[0,\infty)$ depends on whether $L(t)\to \infty$ as $t\to \infty$, which in turn depends on the particulars of the medium heterogeneity function $c(x)$ in \eqref{eq:linScalar}.
\end{proof}

Now, we turn to the nonlinear settings of \eqref{eqn:cl}. Here, it is preferable to discuss CDF estimation (i.e., the statistical approximation of $F(t,x,U)$ based on samples of $u(t,x;\omega)$), since the regularity theory of $F$ is more straightforward.
\begin{lemma}
    Assume the conditions of Theorem \ref{thm:CDF_PDE}, and suppose that $F_0$ is $\beta\in \mathbb{N}$ differentiable in both $x$ and $U$. Then there exists $C>0$, independent of $t$, such that  $$\sup\limits_{x\in \RR} \sup\limits_{U\in \RR} \|\partial^{\beta}_U F(t,x,U) \|_{\infty} \leq C t^\beta \, ,$$
    for all $t\geq 0$ for which Theorem \ref{thm:CDF_PDE} is valid.
\end{lemma}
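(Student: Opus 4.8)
The plan is to reduce everything to the explicit representation of the CDF furnished by Theorem~\ref{thm:CDF_PDE}, namely
\begin{equation}\label{eq:CDFrep}
F(t,x,U) = F_0\bigl(x - a'(U)t,\, U\bigr) \, ,
\end{equation}
and to differentiate it $\beta$ times in $U$ while bookkeeping the powers of $t$ that each differentiation can release. Writing $g(U) \equiv g_{t,x}(U) = x - a'(U)t$, so that $F(t,x,\cdot) = F_0(g(\cdot),\cdot)$, one has $g^{(k)}(U) = -t\, a^{(k+1)}(U)$ for every $k\geq 1$; thus every derivative of $g$ of order $\geq 1$ carries exactly one factor of $t$, which is the mechanism producing the claimed $t^{\beta}$ growth.

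The main step is the (multivariate) chain rule / Fa\`a di Bruno expansion of $\partial_U^{\beta}\,[F_0(g(U),U)]$. It is a finite sum of terms of the form
\begin{equation*}
\bigl(\partial_x^{\,j}\partial_U^{\,m}F_0\bigr)\bigl(g(U),U\bigr)\cdot \prod_{\ell=1}^{j} g^{(k_\ell)}(U) \, ,
\end{equation*}
indexed by integers $j,m\geq 0$ and $k_\ell\geq 1$ with $m + \sum_{\ell} k_\ell = \beta$, where $j$ counts how many of the $\beta$ differentiations are routed into the first slot of $F_0$ (the second slot contributes only first-order blocks, since $U\mapsto U$ has vanishing higher derivatives). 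Each of the $j$ factors $g^{(k_\ell)}$ contributes one power of $t$, so such a term equals $t^{\,j}$ times a product of a partial derivative of $F_0$ of total order $\leq\beta$ and derivatives of $a$ of order $\leq\beta+1$ (so the lemma tacitly requires $a\in C^{\beta+1}$). Since $j\leq\beta-m\leq\beta$, the whole expression is a polynomial in $t$ of degree at most $\beta$; its top-order term, occurring only for $j=\beta$, $m=0$ and all $k_\ell=1$, is exactly $(\partial_x^{\beta}F_0)(x-a'(U)t,U)\,(-a''(U))^{\beta}\,t^{\beta}$.

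It then remains to estimate the coefficients uniformly in $x$ and $U$. The derivatives of $F_0$ up to order $\beta$ are bounded by hypothesis, the derivatives of $a$ are continuous, and the sum has finitely many terms with $j$ ranging only up to $\beta$; taking suprema term by term yields
\begin{equation*}
\sup_{x\in\RR}\ \sup_{U\in\RR}\ \bigl|\partial_U^{\beta} F(t,x,U)\bigr| \ \leq\ C\,(1+t)^{\beta} \, ,
\end{equation*}
with $C$ depending only on $\beta$, on $\sup_{x,U}|\partial_x^{i}\partial_U^{k}F_0|$ for $i+k\leq\beta$, and on $a$, hence independent of $t$ — which is the asserted bound (the displayed $Ct^{\beta}$ being this estimate with the lower-order powers of $t$ absorbed). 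The one genuinely delicate point is the uniformity in $U\in\RR$: for a general strictly convex $a$ the derivatives $a^{(m)}(U)$ need not stay bounded as $|U|\to\infty$, so one must either restrict to an $a$ with bounded derivatives of the relevant orders, or exploit that $F_0(\cdot,U)$ and its $x$-derivatives decay as $|U|\to\infty$ (being those of a distribution function that flattens to $0$ and $1$) quickly enough to absorb that growth; all remaining details are the routine combinatorics of the chain rule.
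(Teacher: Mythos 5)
Your proof follows the same route as the paper: differentiate the representation $F(t,x,U)=F_0(x-a'(U)t,U)$ repeatedly in $U$, observe that each derivative of $U\mapsto x-a'(U)t$ carries exactly one factor of $t$, and conclude that $\partial_U^\beta F$ is a polynomial in $t$ of degree $\beta$ whose coefficients involve derivatives of $F_0$ and of $a$ of order at most $\beta$ (resp.\ $\beta+1$), with top term $\partial_x^\beta F_0\cdot(-a''(U)t)^\beta$ — which is precisely the paper's inductive argument, written out via Fa\`a di Bruno. Your two caveats — that the hypothesis must tacitly include $a\in C^{\beta+1}$, and that uniformity of the estimate in $U\in\RR$ is not automatic unless $a^{(m)}$ is bounded or the decay of $\partial_x^i\partial_U^k F_0$ as $|U|\to\infty$ compensates — are genuine gaps in the paper's terse proof, and your $(1+t)^\beta$ formulation is also cleaner than the stated $Ct^\beta$, which as written fails for small $t$ (the paper quietly restricts to $t\gg 1$).
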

\begin{proof}
    Since $F(t,x,U)=F(0,x-a'(U)t,U)$, and denoting the partial derivatives of $F$ with respect to the second and third coordinates by $F_x$ and $F_U$, respectively, we have
    \begin{align*}
        \partial_U F(t,x,U) &= \partial _U F(0,x-a'(U)t,U) \\
        &= -F_x (0,x-a'(U)t,U)  a''(U) t + F_u (0,x-a'(U)t,U) \, .
    \end{align*}
    Hence, if $F(0,\cdot, \cdot)$ is differentiable in $x$ and $U$, then so is $F(t,\cdot ,\cdot)$. Similarly by induction, after $\beta$ derivatives, $\partial_U^{\beta} F(t,x,U)$ is a sum of powers of $t$, the largest one is of the form $$\partial_x ^{\beta}F(0,x-a'(U)t,U) \left( a''(U)  t\right)^\beta \, ,$$ which completes the upper bound for large times $t\gg 1$.
\end{proof}
The Kernel estimator of the CDF $\hat{F}(y)$ is the cumulative integral of the KDE $\hat{p}$, see \eqref{eq:kde}, on the interval $(\infty, y)$. The theory of these estimators is a consequence of the theory of KDEs, and we refer to \cite{berg2009cdf, wang1991nonparametric} and the references therein for details. In much the same way as Proposition \ref{prop:KDE_rate}, we obtain:
\begin{proposition}
    Let $\hat{F} (t,x,\cdot)$ be the Kernel estimator of $F(t,x;\cdot)$ using $N\geq 1$ i.i.d.\ samples from $\mu$ and solving \eqref{eqn:cl} for each sample. Then, for every $x,U\in \RR$ and every $T\geq 0$, there exists $C>0$ such that     \begin{equation*}
    \sup\limits_{x\in \RR, t\in [0,T]} \sup\limits_{U\in \RR_U} \mathbb{E}|F(t,x;U) - \hat{F}^N (t,x;U ) |^2 \leq C T^{\beta} N^{-\frac{\beta+1}{2\beta+1}} \, ,
    \end{equation*}
    under a choice of optimal Kernel width as specified in \cite{berg2009cdf, wang1991nonparametric}.
\end{proposition}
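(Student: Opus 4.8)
The plan is to reduce the statement to a direct application of the CDF-estimator error bound from the literature, exactly as Proposition \ref{prop:KDE_rate} reduces to the KDE bound \eqref{eq:tsybakov}. The key inputs are: (i) the representation $F(t,x,U)=F(0,x-a'(U)t,U)$ from Theorem \ref{thm:CDF_PDE}, which transfers regularity in $U$ from the initial datum to later times; (ii) the preceding lemma, which bounds $\sup_{x}\sup_{U}\|\partial_U^\beta F(t,x,U)\|_\infty \leq C t^\beta$ uniformly in $x$; and (iii) the statistical error bound for kernel CDF estimators from \cite{berg2009cdf, wang1991nonparametric}, which for a target CDF whose density has $\beta$ bounded derivatives gives a rate $N^{-(\beta+1)/(2\beta+1)}$ with a constant that depends polynomially (in fact linearly) on the relevant Sobolev-type seminorm of the target.

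First I would fix $t\in[0,T]$ and $x\in\RR$ and treat $F(t,x,\cdot)$ as the target CDF. For each of the $N$ i.i.d.\ samples $\omega_1,\dots,\omega_N\sim\mu$ we solve \eqref{eqn:cl} to obtain $u(t,x;\omega_j)$; these are i.i.d.\ draws from $\phi_{t,x}$, whose CDF is precisely $F(t,x,\cdot)$, so $\hat F^N(t,x,\cdot)$ is a bona fide kernel CDF estimator of $F(t,x,\cdot)$. Applying the cited CDF-estimator theory with the optimal bandwidth yields
\begin{equation*}
\sup_{U\in\RR}\mathbb{E}\,|F(t,x,U)-\hat F^N(t,x,U)|^2 \leq C\bigl(1+\|\partial_U^\beta F(t,x,\cdot)\|_\infty\bigr)\,N^{-\frac{\beta+1}{2\beta+1}}.
\end{equation*}
Then I would invoke the preceding lemma to replace $\|\partial_U^\beta F(t,x,\cdot)\|_\infty$ by its bound $C t^\beta\leq CT^\beta$, which is uniform in both $x$ and $t\in[0,T]$ because the lemma's constant is $x$-independent and the $x$-shift in the representation $F(t,x,U)=F(0,x-a'(U)t,U)$ does not affect sup-norms over $x\in\RR$. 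Taking the supremum over $x\in\RR$ and $t\in[0,T]$ then gives the stated estimate, absorbing the additive $1$ and the constant from \eqref{eq:tsybakov}-type bounds into $C$.

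The one point that needs a little care — and which I expect to be the main (minor) obstacle — is matching the exponent and the role of $T$. The density associated to $F(t,x,\cdot)$ has $\beta-1$ bounded derivatives once $F$ has $\beta$, so one must check that the CDF-estimator rate in \cite{berg2009cdf, wang1991nonparametric} is indeed $N^{-(\beta+1)/(2\beta+1)}$ under the hypothesis that $F_0$ is $\beta$ times differentiable in $U$ (the shift is in the smoothness bookkeeping between $f$ and $F$). Likewise one must confirm that the dependence on the target's seminorm enters the constant linearly — or at worst polynomially — so that the $C t^\beta$ bound produces the advertised $C T^\beta$ prefactor rather than, say, $C T^{2\beta}$; the cited references state exactly such a dependence, so this is bookkeeping rather than a genuine difficulty. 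Everything else is a verbatim repetition of the argument in Proposition \ref{prop:KDE_rate}, with the KDE bound \eqref{eq:tsybakov} replaced by its CDF analogue and Lemma \ref{lem:regLin} replaced by the preceding lemma.
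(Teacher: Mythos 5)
Your reconstruction is faithful to the paper: the paper offers no proof beyond saying the result follows ``in much the same way as Proposition \ref{prop:KDE_rate}'', and your strategy --- apply the kernel CDF-estimator bound from the cited references, substitute the $C t^{\beta}$ regularity bound from the preceding lemma, and take suprema over $x$ and $t\in[0,T]$ --- is exactly that argument. The one caveat you flag, namely whether $\|\partial_U^{\beta}F(t,x,\cdot)\|_{\infty}$ enters the constant linearly (yielding the advertised $T^{\beta}$) rather than quadratically (which a naive bias-squared accounting $h^{2\beta}\|\partial_U^{\beta}F\|_{\infty}^{2}$ would suggest, giving $T^{2\beta}$), is indeed the only delicate bookkeeping point, and you are right to mark it as such rather than silently assert the prefactor.
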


\section{Numerical examples}

\subsection{Linear scalar case}
Let us first demonstrate that, for a linear hyperbolic equation, the evolution of the pointwise statistics, $\phi_{t,x}$, through the correponding PDFs $f(t,x,U)$, follow an evolution law which is {\it (i)} independent of the particular underlying random field $u(t,x;\omega)$, and {\it (ii)} described by Theorem \ref{thm:cxPDF}. 

\subsubsection{Independence of spatial correlations}\label{sec:test1}
Here, we consider two different random initial value problems, satisfying the same PDE. In both cases, $\phi_{0,x}$ are the same, but $u(0,x;\omega)$ is different from $v(0,x;\omega)$. 
\begin{equation} \label{eqn:u0}
\begin{cases}
\partial_t u(t,x; \omega ) + c(x)\partial_x u = 0 \, , \quad c(x)=  0.1x^2+1, \quad x \in [-3,4] \, ,
\\ u(0,x,\omega) = \omega (g(x+1) + g(x-1) )\, , \quad  g(x) = \max\{1-(3x)^2,0\}, \quad \omega \sim \text{unif}~[0,1] \, ,
\end{cases}
\end{equation}

\begin{equation} \label{eqn:v0}
\begin{cases}
\partial_t v(t,x; \omega ) + c(x)\partial_x v  = 0\, , \quad c(x)=  0.1x^2+1, \quad x \in [-3,4] \, ,
\\ v(0,x,\omega) = \omega g(x+1) + (1-\omega)g(x-1) \, , \quad  g(x) = \max\{1-(3x)^2,0\}, \quad \omega \sim \text{unif}~[0,1] \, .
\end{cases}
\end{equation}
Not only do \eqref{eqn:u0} and \eqref{eqn:v0} {\em look} different, they represent different random fields. For example: the probability that $v(0,x;\omega)=v(0,-x;\omega)$ is zero for $x\in (-4/3,-2/3)\cup (2/3,4/3)$, whereas $u(0,x;\omega)=u(0,-x;\omega)$ almost surely for these same values of $x$.

Nevertheless, denoting the pointwise statistics corresponding to $u$ and $v$ by $\phi_{t,x}^u$ and $\phi_{t,x}^v$, one can verify that $\phi^u _{0,x}=\phi^{v}_{0,x}$ for every $x\in \RR$. Indeed, denoting the respective cumulative distribution function  by $F^u$ and $F^v$, we therefore have a single function $F_0 :\RR_x \to L^1(\RR_U)$ with $F_0=F^u(0,\cdot ,\cdot)=F^v(0,\cdot ,\cdot)$. 
Hence, by Theorem~\ref{thm:CDF_PDE}, we expect that $F^v$ and $F^u$ both satisfy the following PDE 
\begin{equation} \label{eqn:f000}
\begin{cases}
\partial_t F(t,x;U ) + c(x)\partial_x F(t,x;U) =0\, ,
\\ F(0,x,U) = F_0(x,U)  \, .
\end{cases}
\end{equation}
The initial condition for CDF is:
\begin{itemize}
    \item for $x \in [-\frac{2}{3}, \frac{2}{3}]$ or $x\leq -\frac{4}{3}$ or $x \geq \frac{4}{3}$:
    \begin{align*}
    F(0,x,U) = \left\{ \begin{array}{cc} 
     0, & U<0, \\ 1, & U>0;
    \end{array}\right.
    \end{align*}
   \item for $x\in  [-\frac{4}{3}, -\frac{2}{3}]$:
   \begin{align*}
    F(0,x,U) = \left\{ \begin{array}{cc} 
     0, & U<0, \\ \frac{U}{g(x+1)}, & 0\leq U \leq g(x+1),
     \\ 1, & U \geq g(x+1);
    \end{array}\right.
    \end{align*}
    \item for $x \in  [\frac{2}{3}, \frac{4}{3}] $:
    \begin{align*}
    F(0,x,U) = \left\{ \begin{array}{cc} 
     0, & U<0, \\ \frac{U}{g(x-1)}, & 0\leq U \leq g(x-1), \\ 1, & U \geq g(x-1).
    \end{array}\right.
    \end{align*}
\end{itemize}
 \revO{The initial data for PDF in \eqref{eq:PDF_utcxux} can be derived accordingly by taking the derivative of $F(0,x,U)$ with respect to U.}

In practice, we solve \eqref{eqn:u0} and \eqref{eqn:v0} using the Monte Carlo method in $w$ with $10^5$ samples and reconstruct the PDF and CDF using a KDE for every x.\footnote{We use Matlab's built-in function \texttt{ksdensity}.} For \eqref{eqn:u0} and \eqref{eqn:v0}, the spatial and temporal derivatives are treated using the upwind method, with numerical parameters $\Delta x = 0.01$, $\Delta t = 10^{-3}$. For \eqref{eqn:f000}, we also use an upwind scheme with a slope limiter, applying the same discretization parameters. We discretize in $U$ with $\Delta U = 0.0151$.

The results are collected in Figs.~\ref{fig:linear1} and \ref{fig:linear1-pdf}. Fig.~\ref{fig:linear1} shows the CDF obtained from the three models individually, and the last plot displays the error between $\eqref{eqn:u0}$ and $\eqref{eqn:v0}$. The $L^1$ error between \eqref{eqn:u0} and \eqref{eqn:f000} is $0.0438$. Fig.~\ref{fig:linear1-pdf} further compares the mean and variance across the three models--\revO{\eqref{eqn:u0}, \eqref{eqn:v0} and \eqref{eq:PDF_utcxux}}, with errors recorded in the caption. In all, we observe an excellent match between the Monte-Carlo computed pointwise statistics of $u$ and $v$, and with the directly-evolved \revO{$f(t,x;U)$}, as predicted.
\begin{figure}[h!]
    \centering
    \includegraphics[scale= 0.5]{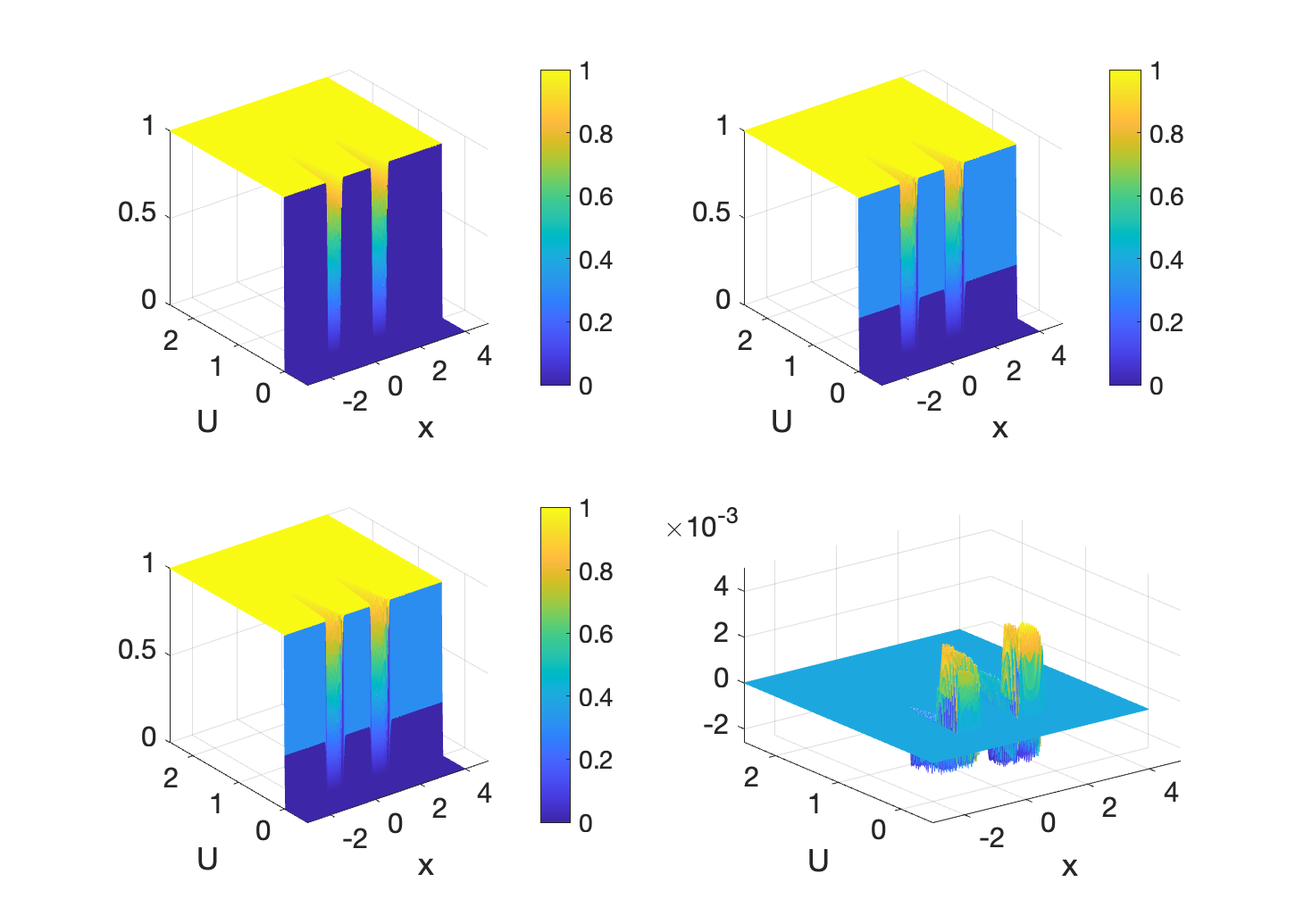}
    \caption{Scalar, linear hyperbolic equations:  CDF $F(0.2,x,U)$ corresponding to, from left to right, top to bottom \eqref{eqn:f000}, \eqref{eqn:u0} and \eqref{eqn:v0}. The last plot is the error between \eqref{eqn:u0} and \eqref{eqn:v0}. The $L^1$ error between the solution to \eqref{eqn:f000} and \eqref{eqn:u0} is 0.0438.}
    \label{fig:linear1}
\end{figure}
\begin{figure}[h!]
    \centering
    \includegraphics[width= 0.8\textwidth, height= 0.25\textwidth]{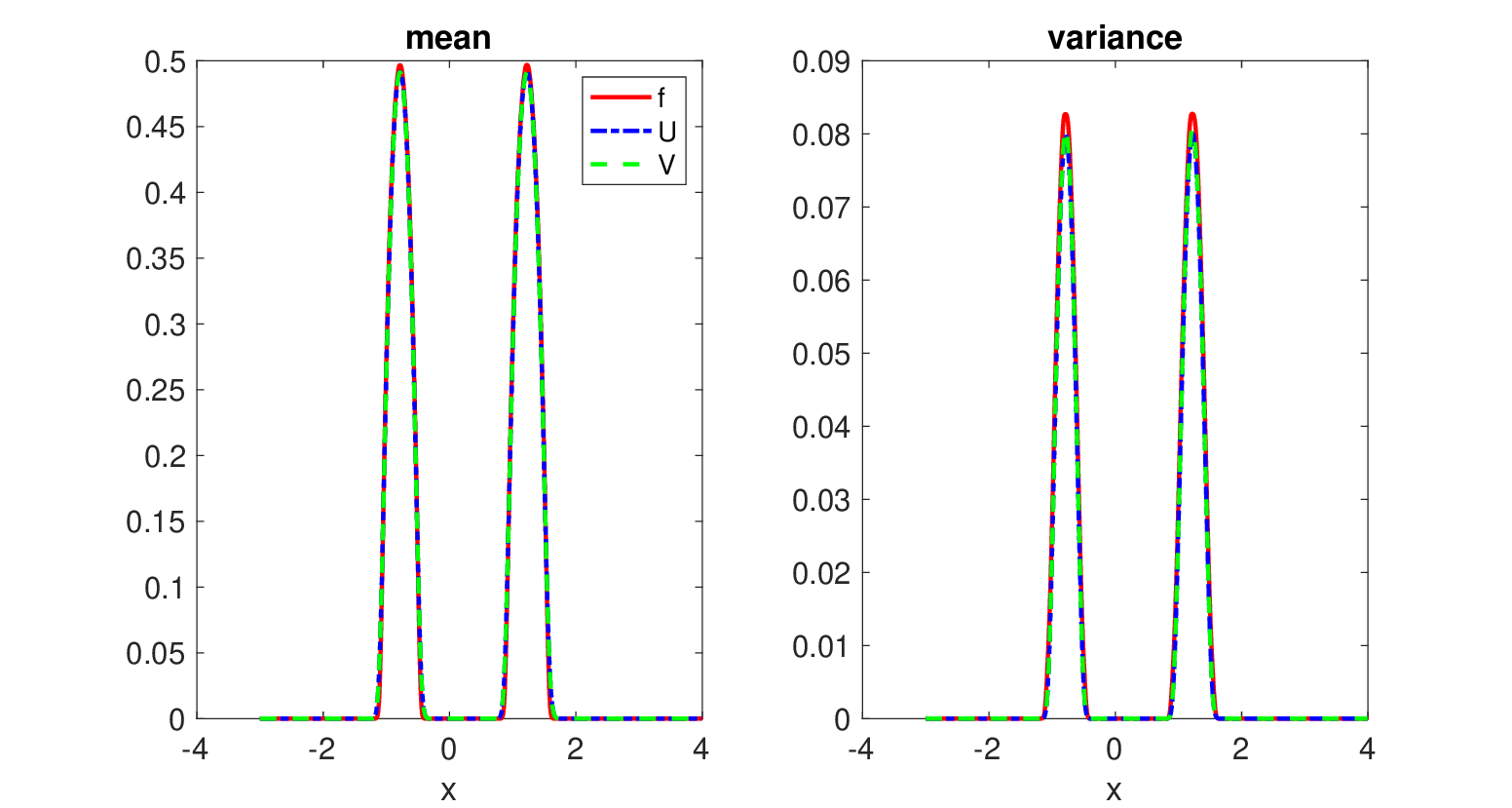}
    \caption{Same settings as in Fig.\ \ref{fig:linear1}. Comparison of mean and variance. The $L^1$ error in mean between $U$ and $V$ is $5.7607e-04$; and between $U$ and $f$ is $0.0136$. Likewise, the  $L^1$ error in mean between $U$ and $V$ is $4.8364e-04$; and between $V$ and $f$ is $0.0044$.}
    \label{fig:linear1-pdf}
\end{figure}

\subsubsection{Two-dimensional example} \label{sec:linear2}
As noted in the introduction, having direct evolution equations for the pointwise statistics, $\phi_{t,x}$, implies that even if the underlying parameter space $\Omega$ is high-dimensional, we can solve the (fixed-dimensional) equations for the PDF or CDF instead. To see this numerically, consider
\begin{equation} \label{eqn:u}
\begin{cases}
\partial_t u(t,x; \vec\gamma )= c(x)\partial_x u \, , ~ c(x) = 1- 0.1x^2,\\
u(0,x,\vec \gamma ) = u_{(0,x)}(\vec \gamma) := \gamma_1 g_1(x) +  \gamma_2 g_2(x)\,,
\end{cases}
\end{equation}
where $\vec\gamma = (\gamma_1, \gamma_2)\in \RR^2$ is distributed according to the normal distribution with mean $(0,1)^{\top}$ and covariance matrix ${\rm diag}(1, 0.25)$,
and where  $g_1(x) = e^{10(x+1)^2} + 1$ and 
 $g_2(x) = e^{10(x-1)^2} + 1$. 
By Theorem~\ref{thm:cxPDF}, $f$ should satisfy \eqref{eq:PDF_utcxux}
with
$    f_{(0,x)}(U) = \mathcal{N} (g_2(x), g_1(x)^2+ 0.5^2 g_2(x)^2 )$
for any $x$ in the computational domain.

Fig.~\ref{fig:linear2_1} compares the PDFs obtained by a Monte Carlo simulation of \eqref{eqn:u} and by solving the PDF evolution equation \eqref{eq:PDF_utcxux}.  Similarly, Fig.\ \ref{fig:linear2_2} compares the mean and variance obtained from these two models, showing good agreement. The $L^1$ errors between them are 0.0266 and 0.0554, respectively. The right plot in Fig.\ \ref{fig:linear2_2} displays $f(0.4,0.29,U)$, where we observe a good agreement between the results. \revO{To explain these errors, first note that Note that in our method, since $f(0,x,U)$ can be computed in closed form, the only error is that associated with solving the PDE for $f$, \eqref{eq:PDF_utcxux}. In this one-dimensional PDE, this error can be controlled relatively well with the numerical parameters we report below. Hence, the sources of the error is rather due to the Monte Carlo KDE estimator. The error of such nonparametric statistical estimators converges quite slowly in the number of samples (see Theorem \ref{thm:tsybakov}). Indeed, we also observe numerically, that increasing the number of samples dramatically does reduce the overall error (results not shown).} \st{Here, better precision can be obtained by drawing more samples from the initial conditions, the number of which can always be increased at a relatively low computational cost.}

In our examples, we set $x\in [-2,3]$ and apply a periodic boundary condition. For \eqref{eq:PDF_utcxux},  we use $\Delta x = 0.01$, $\Delta t = 0.002$ and $\Delta U = 0.012$. For the Monte Carlo method applied to \eqref{eqn:u}, we use  $\Delta x = 0.01$, $\Delta t =0.0025$ and $10^5$ samples. The density for \eqref{eqn:u} is reconstructed using \texttt{ksdensity} with kernel width $0.02$.

\begin{figure}[h!]
    \centering
    \includegraphics[width= 1\textwidth, height=0.3 \textwidth]{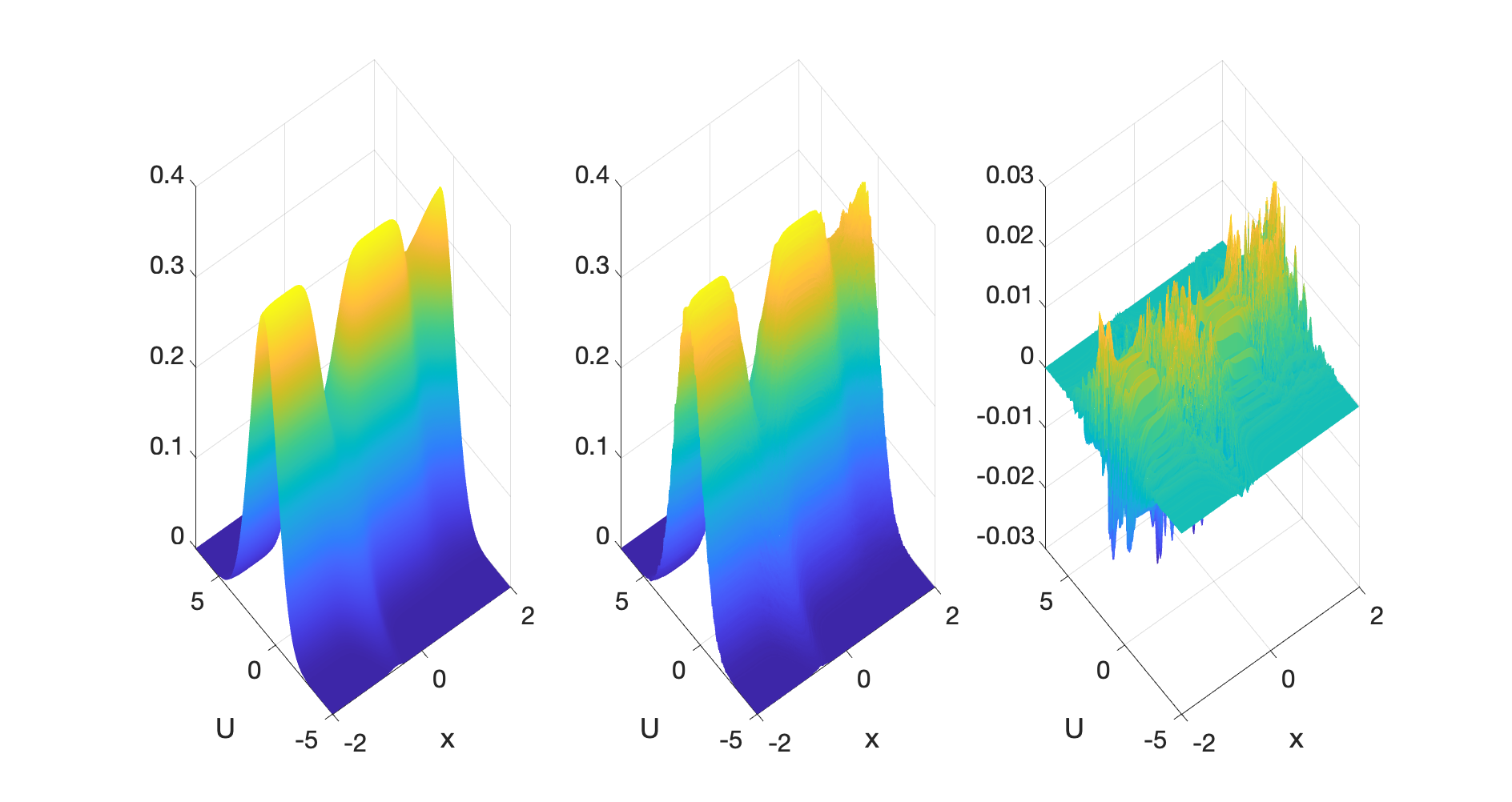}
    \caption{The  PDF $f(t=0.4,x,U)$ for \eqref{eqn:u}. Left is computed using \eqref{eq:PDF_utcxux} and middle is using \eqref{eqn:u} with $10^5$ samples. The right plot is the difference between the two.}
    \label{fig:linear2_1}
\end{figure}

\begin{figure}[h!]
    \centering
    \includegraphics[width= 1.0\textwidth, height= 0.3\textwidth]{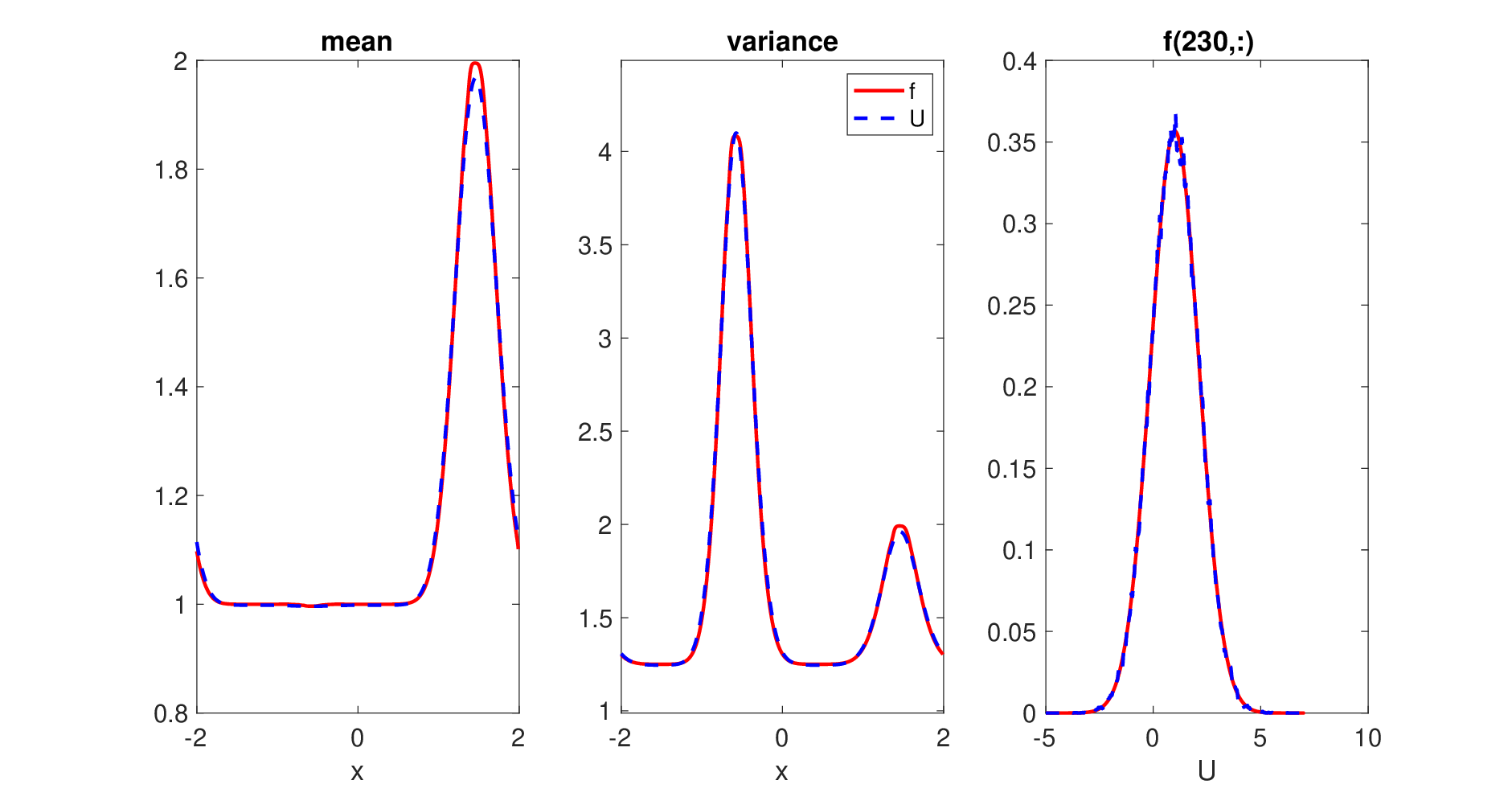}
    \caption{Same settings as in Fig.\ \ref{fig:linear2_1}. Comparison of mean, variance, and PDF (at $x=0.29$) at $t=0.4$, \revO{ using a direct solution of \eqref{eq:PDF_utcxux} (red, solid) and Monte-Carlo simulations of \eqref{eqn:u} (blue, dashed).} The $L^1$ errors between the mean and variance are 0.0266 and 0.0554, respectively.}
    \label{fig:linear2_2}
\end{figure}

\subsection{Nonlinear rarefaction waves}
To demonstrate the validity of Theorems \ref{thm:CDF_PDE} and \ref{thm:PDF_PDE_nl}, we consider the Burgers' equation in the {\em absence of shock}, i.e., the initial data are set such that a rarefaction wave emerges almost surely:
\begin{equation} \label{eqn:u-rare}
\begin{cases}
\partial_t u + \half \partial_x (u^2) = 0 \, ,
\\ u(0,x,\omega) = \omega (g_1(x-1) + g_2(x+1) )\,, \quad \omega \sim \text{unif}~[0,1]\,,
\end{cases}
\end{equation}
where 
\begin{equation} \label{g1g2}
    g_1 (x) = \half (1+\text{tanh}(40x)), \quad g_2(x) = \half(\text{tanh}(40x)-1) = g_1(x)-1\,.
\end{equation}
Here, as in the linear case of Sec.\ \ref{sec:test1}, we will show that only $\phi_{0,x}$ matters to determine $\phi_{t,x}$ for $t>0$. Hence, we consider a different set of random data with the same initial pointwise statistics:
\begin{equation} \label{eqn:v-rare}
\begin{cases}
\partial_t v + \half \partial_x (v^2) = 0 \, ,
\\ u(0,x,\omega) = \omega g_2(x+1) + (1-\omega)g_1(x-1) \,, \quad \omega \sim \text{unif}~[0,1]\,.
\end{cases}
\end{equation}
One can check directly that the CDFs for $u(0,x,\omega)$ and $v(0,x,\omega)$ are identical for each $x$, up to exponentially small terms. To calculate this common CDF, $F_0 (x;U)$, define $[a,b]=[-0.25,0.25]$ as the interval on which $g_1 (x-1)+g_2 (x+1)\approx 0$, where we approximate $g_1 (x-1)\approx 0$ for $x<a$ and $g_2(x+1)\approx 0$ for $x>b$. Therefore, by direct computation,
\begin{itemize}
    \item for $x \in [a, b]$:
    \begin{align*}
    F_0(x,U) = \left\{ \begin{array}{cc} 
     0, & U<0, \\ 1, & U>0;
    \end{array}\right.
    \end{align*}
   \item for $x < a$:
   \begin{align*}
    F_0(x,U) = \left\{ \begin{array}{cc} 
     1, & U \geq 0, \\ 1- \frac{U}{g_2(x+1)}, & 0 \geq U \geq g_2(x+1),\\
     0, & U< g_2(x+1); 
    \end{array}\right.
    \end{align*}
    \item for $x >b $:
    \begin{align*}
    F_0(x,U) = \left\{ \begin{array}{cc} 
     0, & U<0, \\ \frac{U}{g_1(x-1)}, & 0\leq U \leq g_1(x-1),\\
     1, & U \geq g_1(x-1).
    \end{array}\right.
    \end{align*}
\end{itemize}
An alternative way of computing $F_0(x,U)$ is
\begin{align} \label{F0_rare}
    F_0(x,U) = \left\{\begin{array}{cc}  
    \min \{ \max\{ \frac{U}{g_1(x-1) + g_2(x+1)}, 0\}, 1\},  & g_1(x-1) + g_2(x+1)>0,\\
    \max\{ 1- \max \{ \frac{U}{g_1(x-1) + g_2(x+1)} ,0\}, 0\}, &  g_1(x-1) + g_2(x+1) <0,
    \\ \mathbf{1}_{U\geq 0}, & \quad  g_1(x-1) + g_2(x+1) =0,
    \end{array} \right. 
\end{align}
where for any set $B$ denote by $\mathbf{1}_{B}$ the characteristic function of  $B$.

From the expression of $F_0(x,U)$ in \eqref{F0_rare}, we see that $F_0$ is close to a function which is not differentiable everywhere, and therefore $f(t,x;U)=\partial_U F(t,x;U)$ is very large at these point. Therefore, it is more prudent to evolve the CDF equation according to \eqref{eqn:CDF_noshock}, and then compute the PDF from it using central-difference numerical differentiation.  

Both the CDF and the resulting mean and covariance are compared to direct Monte Carlo simulations of \eqref{eqn:u-rare} and \eqref{eqn:v-rare}, as shown in Figs.~\ref{fig:rare-pdf} and \ref{fig:rare}. The Monte Carlo simulations use $10^5$ samples. 
As in the linear case discussed previously, the results are nearly indistinguishable. The discretization parameters used are $\Delta x = 0.01$, $\Delta t = 0.0025$, and $\Delta U = 0.01$.

\begin{figure}[h!]
    \centering
    \includegraphics[width= 0.9\textwidth, height=0.4\textwidth]{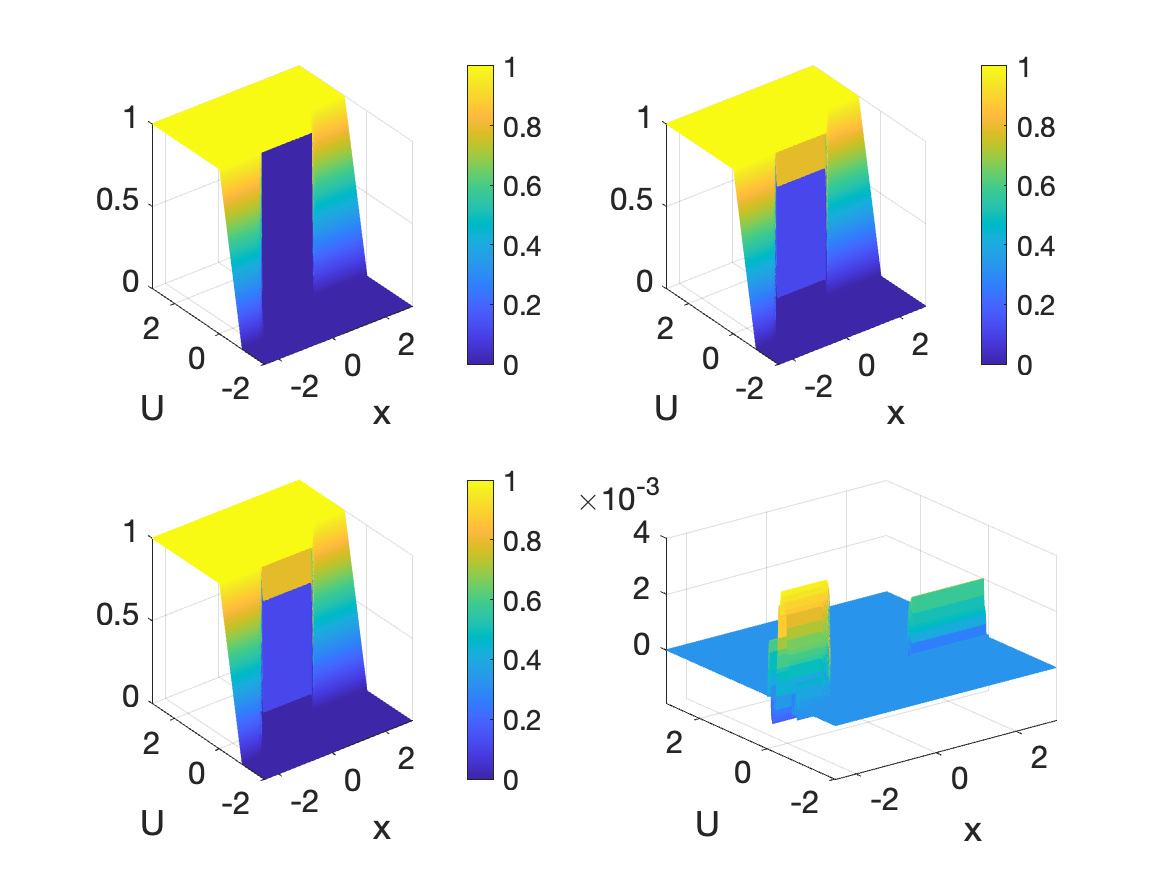}
    \caption{Comparison of CDF of random rarefaction solutions at time $t=0.8$. From top to bottom, left to right, the results are obtained by solving  \eqref{eqn:CDF_noshock} (for $F$), \eqref{eqn:u-rare}, and \eqref{eqn:v-rare}, respectively. The bottom right plot is the difference between \eqref{eqn:u-rare} and \eqref{eqn:v-rare}, and the $L^1$ error between \eqref{eqn:CDF_noshock} and \eqref{eqn:v-rare} is 0.0179.}
    \label{fig:rare-pdf}
\end{figure}

\begin{figure}[h!]
    \centering
    \includegraphics[width= 0.8\textwidth, height=0.3\textwidth]{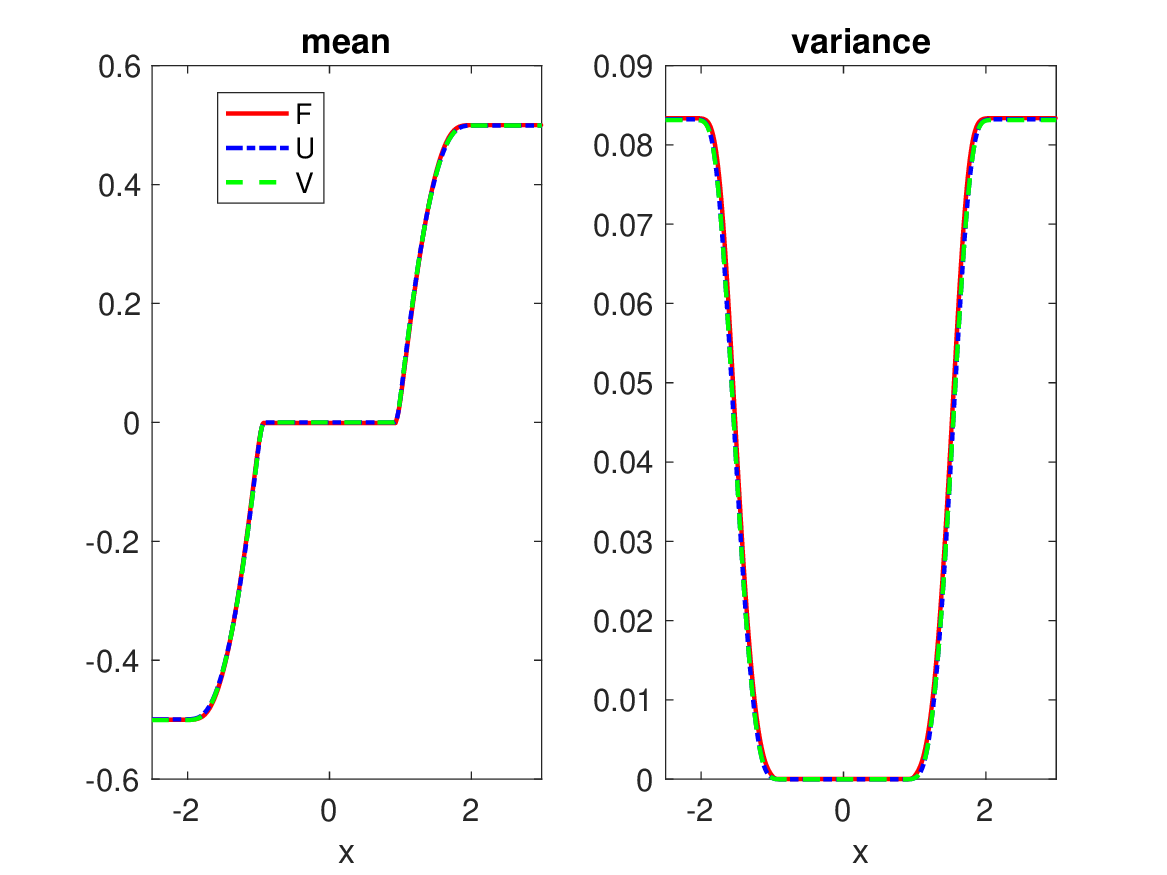}
    \caption{Same settings as Fig.\ \ref{fig:rare-pdf}. Comparison of mean and variance at time $t=0.8$ between three models: u \eqref{eqn:u-rare} (dashed blue), $v$ \eqref{eqn:v-rare} (dashed green), and the directly-evolved CDF, $F$, according to \eqref{eqn:CDF_noshock} (solid red). }
    \label{fig:rare}
\end{figure}

We now turn to demonstrate the non-local PDE \eqref{eqn:PDF_PDE_nl} for evolving the PDF $f$. First, as noted, differentiate the CDF obtained above with respect to $U$ to obtain (a numerical approximation of) $f(t,x;U)$. Alternatively, we compute $f_0 (x,U)$ by differentiating $F_0 (x;U)$ (see  \eqref{F0_rare}) and then evolve the numerically computed PDF according to \eqref{eqn:PDF_PDE_nl}, and compare the two approximations of $f$ at $t=0.8$. The scheme by which we solved \eqref{eqn:PDF_PDE_nl} is described in Appendix \ref{sec:appendix}..

The mean and covariance for these two solutions are shown in Fig.~\ref{fig:rare3}. It is observed that the ``differentiate and evolve" approach (based on \eqref{eqn:PDF_PDE_nl}) is slightly more diffusive than the ``evolve then differentiate" (based on $F(t,x;U)$) approach. 
\begin{figure}[h!]
    \centering
    \includegraphics[width= 0.8\textwidth, height=0.3\textwidth]{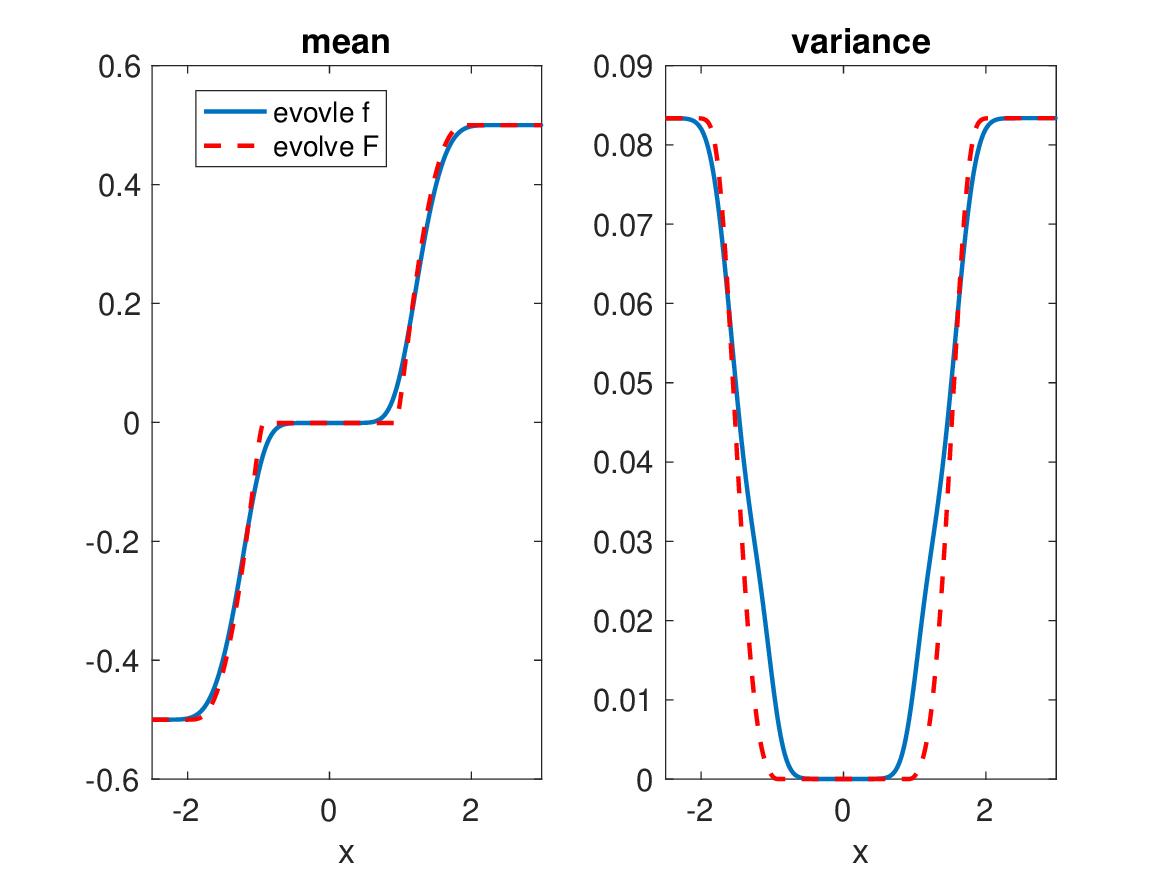}
    \caption{For the rarefaction wave settings of \eqref{eqn:u-rare}, we compare the mean and variance at $t=0.8$ as obtained through evolving the PDF using \eqref{eqn:PDF_PDE_nl} (blue, solid) and by evolving the CDF using \eqref{eqn:CDF_noshock} (red, dashed).}
    \label{fig:rare3}
\end{figure}

\begin{remark}[\revO{Evolving the CDF  vs the PDF - practical considerations.}]
\revO{Computationally, evolving the CDF
is often preferable, for the following reasons: estimating the initial CDF from samples is statistically
more stable than estimating a density. Furthermore, the CDF satisfies a simple linear
transport equation, whereas the evolution equation for the PDF is nonlocal and requires solving in parallel for many values of~$U$. Lastly, the
CDF is well–defined even when the measure $\phi_{t,x}$ has singular components.}

\revO{On the other hand, certain applications require the PDF directly, for instance
when densities are used for visualization or as a basis for sampling algorithms.
A prominent example is Markov Chain Monte Carlo, where the acceptance step
depends on evaluating the target density (up to normalization); see, e.g.,
\cite{brooks_etal_2011, robert_casella_2010}. In such cases, evolving the
PDF avoids numerical differentiation of a CDF, which may introduce additional
error. Both approaches are valid, and the choice depends on the intended use.
}
\end{remark}

\subsection{Nonlinear case: shock}\label{sec:ShockExample}
As in the previous examples, we study the effect of spatial correlations by comparing two distinct initial conditions that share identical pointwise statistics $\phi_{0,x}$. We will demonstrate that, although the pointwise statistics agree before the creation of the shock, they diverge afterwards.

Specifically, the equation for $u(t,x; \omega )$ reads 
\begin{equation} \label{eqn:u-shock}
\begin{cases}
\partial_t u + \half \partial_x (u^2) = 0 \, ,
\\ u(0,x,\omega) = \omega (g_1(x-\half) + g_2(x+\half) )\,, \quad \omega \sim \text{unif}~[0,1]\,,
\end{cases}
\end{equation}
where 
\begin{equation*}
    g_1 (x) = \left\{ \begin{array}{cc}
    0 & x < -\frac{1}{4} \\ -2x+\half & -\frac{1}{4} \leq x < \frac{1}{4} \\ -1 & x \geq \frac{1}{4}\end{array}\right., \qquad g_2(x) = g_1(x)+1\,.
\end{equation*}
The equation for $v(t,x; \omega )$ reads 
\begin{equation} \label{eqn:v-shock}
\begin{cases}
\partial_t v + \half \partial_x (v^2) = 0 \, ,
\\ v(0,x,\omega) = \omega g_2(x+\half) + (1-\omega) g_1(x-\half) \,, \quad \omega \sim \text{unif}~[0,1]\,.
\end{cases}
\end{equation}

\begin{figure}
\centering
\begin{subfigure}[b]{0.32\linewidth}
    \includegraphics[width=\linewidth, height=0.8\linewidth]{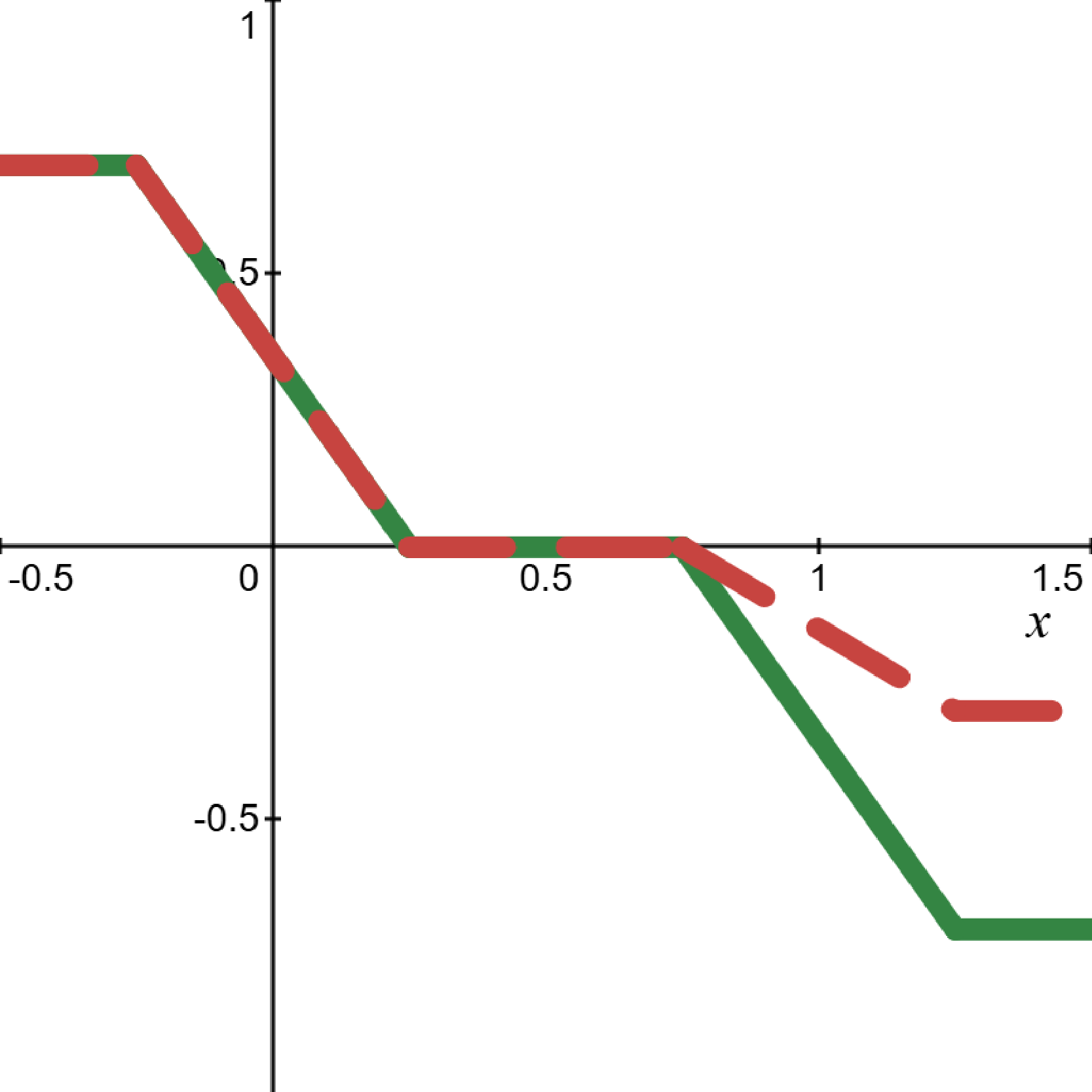}
    \caption{$t=0$}
    \label{fig:u0v0_1}
\end{subfigure}
\hfill
\begin{subfigure}[b]{0.32\linewidth}
    \includegraphics[width=\linewidth, height=0.8\linewidth]{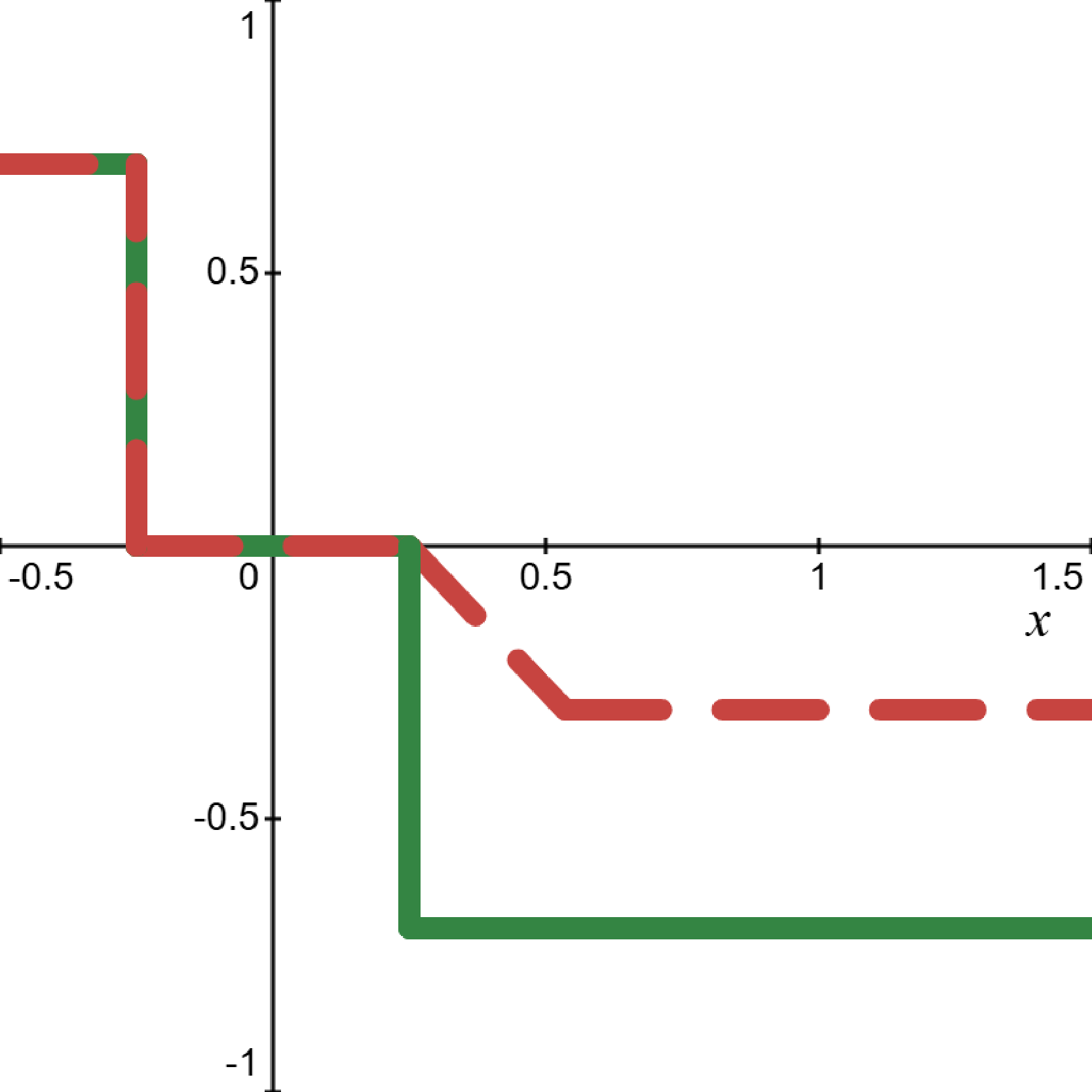}
    \caption{$t_1(0.7)=\frac57$}
    \label{fig:u0v0_2}
\end{subfigure}
\hfill
\begin{subfigure}[b]{0.32\linewidth}
    \includegraphics[width=\linewidth, height=0.8\linewidth]{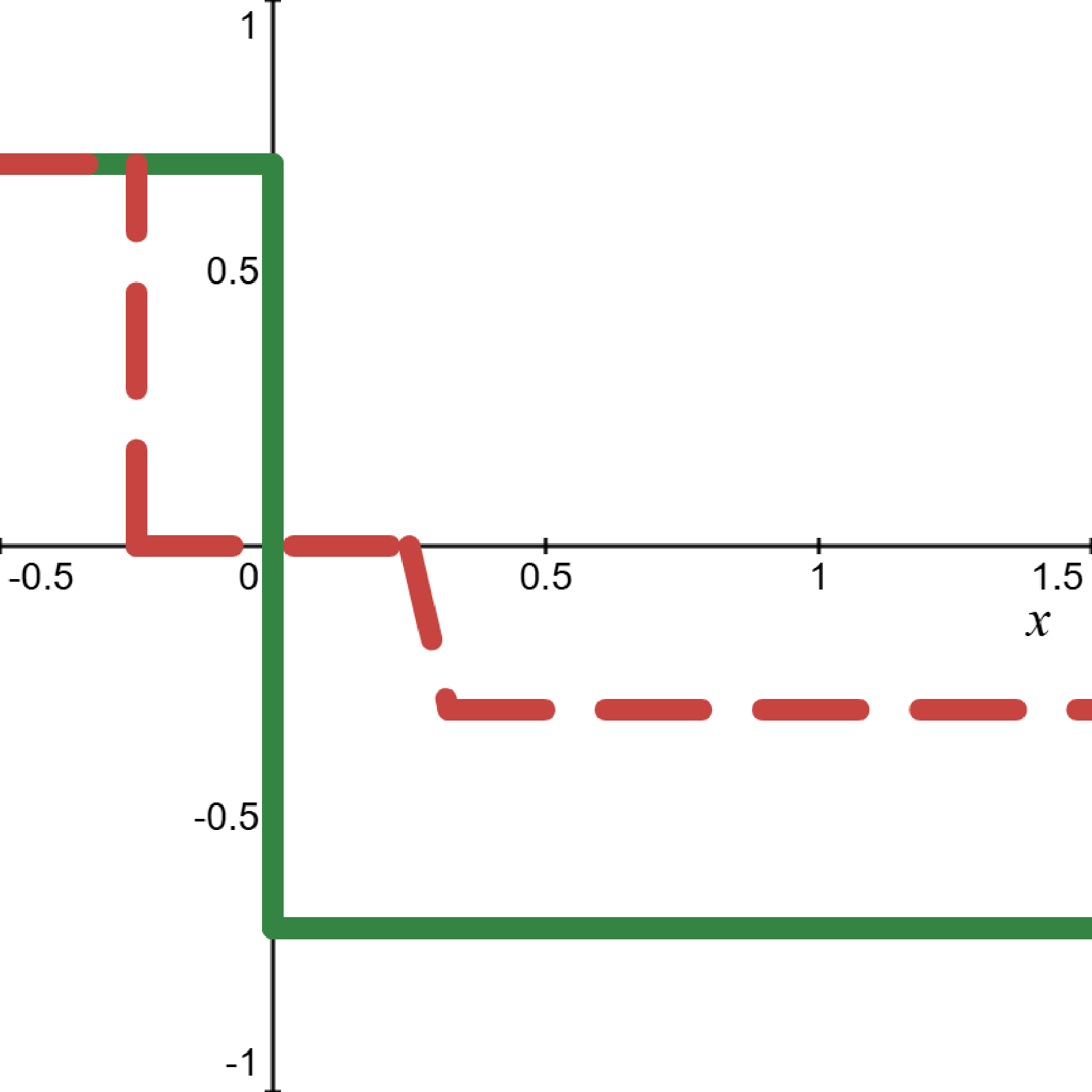}
    \caption{$t_2 (0.7) = \frac{10}{7}$}
    \label{fig:u0v0_3}
\end{subfigure}
   \caption{Illustration of $u(t, x;\omega)$ (solid green) and $v(t,x;\omega)$ (dashed red) for $\omega = 0.7$, see \eqref{eqn:u-shock} and \eqref{eqn:v-shock}, respectively. {\bf Left:} $t=0$. {\bf Middle:} $t_1 (0.7)=5/7$. Here $u(t_1(\omega),x;0.7)$ developed two shocks, whereas $v(t_1(\omega),x;0.7)$ developed only one. {\bf Right:} $t_2(0.75) = 10/7$. Here, the two shocks of $u$ have merged, whereas $v$ still has only one shock.}
    \label{fig:u0v0}
\end{figure}

What do typical solutions $u$ or $v$ look like? $u_0(x;\omega)$ and $v_0 (x;\omega)$ both consist of two consecutive step-like structures that develop a shock at finite time, see Fig.\ \ref{fig:u0v0}, left panel. By explicit calculation (using the method of characteristics), the left step develops into a shock for both $u$ and $v$ at time $t_1 (\omega) =(2\omega)^{-1}\in [1/2,\infty)$. For $u$, the right step also develops into a shock at $t_1(\omega)$. For  $v$, however, the shock develops at $t'_1(\omega)= (2(1-\omega))^{-1}$ (see Fig.\ \ref{fig:u0v0}, middle panel). Once formed, the right shocks of $u$ and $v$ travel (left) at speeds of $-\omega/2$ and $-(1-\omega)/2$, respectively, whereas the left shocks travel (right) at a speed of $\omega/2$. Therefore, At $t_2 (\omega)=\omega^{-1}\in [1,\infty)$ the two shocks of $u(t,x;\omega)$ merge at $x=0$, where they remain for all $t\geq t_2 (\omega)$, i.e., with probability $1$. For $v$, however, the shocks merge at different times and will travel to the right at speed $1$.

Notwithstanding the differences in the dynamics, the initial CDF corresponding to both $u_0$ and $v_0$, denoted for simplicity by $F_0(x,v)$, is given by
\begin{itemize}
    \item for $x \in [-0.25, 0.25]$:
    \begin{align*}
    F_0(x,U) = \left\{ \begin{array}{cc} 
     0,& U<0, \\ 1, & U>0;
    \end{array}\right.
    \end{align*}
   \item for $x >0.25$:
   \begin{align*}
    F_0(x,U) = \left\{ \begin{array}{cc} 
     1, & U \geq 0, \\ 1- \frac{U}{g_1(x-\half)}, & 0 \geq U \geq g_1(x-\half),\\
     0, & U< g_1(x-\half); 
    \end{array}\right.
    \end{align*}
    \item for $x <-0.25 $:
    \begin{align*}
    F_0(x,U) = \left\{ \begin{array}{cc} 
     0, & U<0, \\ \frac{U}{g_2(x+\half)}, & 0\leq U \leq g_2(x+\half),\\
     1, & U \geq g_2(x+\half).
    \end{array}\right.
    \end{align*}
\end{itemize}

Again, we compare the directly computed (using Monte Carlo)  CDF and PDF corresponding to $u$ and $v$ with the directly evolved versions according to \eqref{eqn:CDF_noshock} and \eqref{eqn:PDF_PDE_nl}, respectively. In the following, we compare the solutions at three distinct time points relative to the times of shock formation, as shown in Fig.~\ref{fig:u0v0} for illustration. In all these simulations, the Monte Carlo simulations use $2\times 10^4$ samples, and the discretization parameters used are $\Delta x = 0.01$, $\Delta t = 0.0025$, and $\Delta U = 0.01$. 

Figs.\ \ref{fig:shock1} and \ref{fig:shock2} present the results at $t=0.4$, prior to the formation of any shocks, i.e., $t< \min_{\omega}t_1(\omega) = t_1(1)=0.5$. Since no shocks have formed, the three solutions agree as expected (Sec.\ \ref{sec:nonlin}).

At $t=0.8$ (see Figs.\ \ref{fig:t8_1}--\ref{fig:t8_4}), $u$ has developed two shocks for all $\omega >5/8$ and none otherwise. On the other hand, $v$ developed exactly one shock, whose location depends on the value of $\omega$. Here, the analysis of Sec.\ \ref{sec:nonlin} is no longer valid, and indeed, as Remark \ref{rem:noShock} implies, the directly-computed PDF $f(t,x;U)$ does not agree with those of $u$ and $v$. What may be surprising is that the directly-computed (via Monte Carlo simulations) pointwise statistics of $u$ and $v$ agree. This is due to a symmetry between $u$ and $v$: for $x<0$, they behave exactly the same at $\min_{\omega}t_1 (\omega)<t<\min_{\omega}t_2 (\omega)$. Furthermore, while they differ for $x>0$, there is a one-to-one correspondence between $u(t,x;\omega)$ and $v(t,x;1-\omega)$ for these $x$ values, and therefore the pointwise statistics agree.

Finally, we consider $t = 2>t_2(\omega)$ for $\omega >1/2$, when the two shocks have collided and merged for $u$ (see Fig.\ \ref{fig:u0v0}, right panel). As can be seen in Figs.~\ref{fig:shock_t2_1} and \ref{fig:shock_t2_2}, the pointwise statistics of $u$ and $v$ already disagree here (since the shocks have not merged for $v$, or indeed might not have formed yet).


\begin{figure}[!h]
\includegraphics[width=1.0\textwidth]{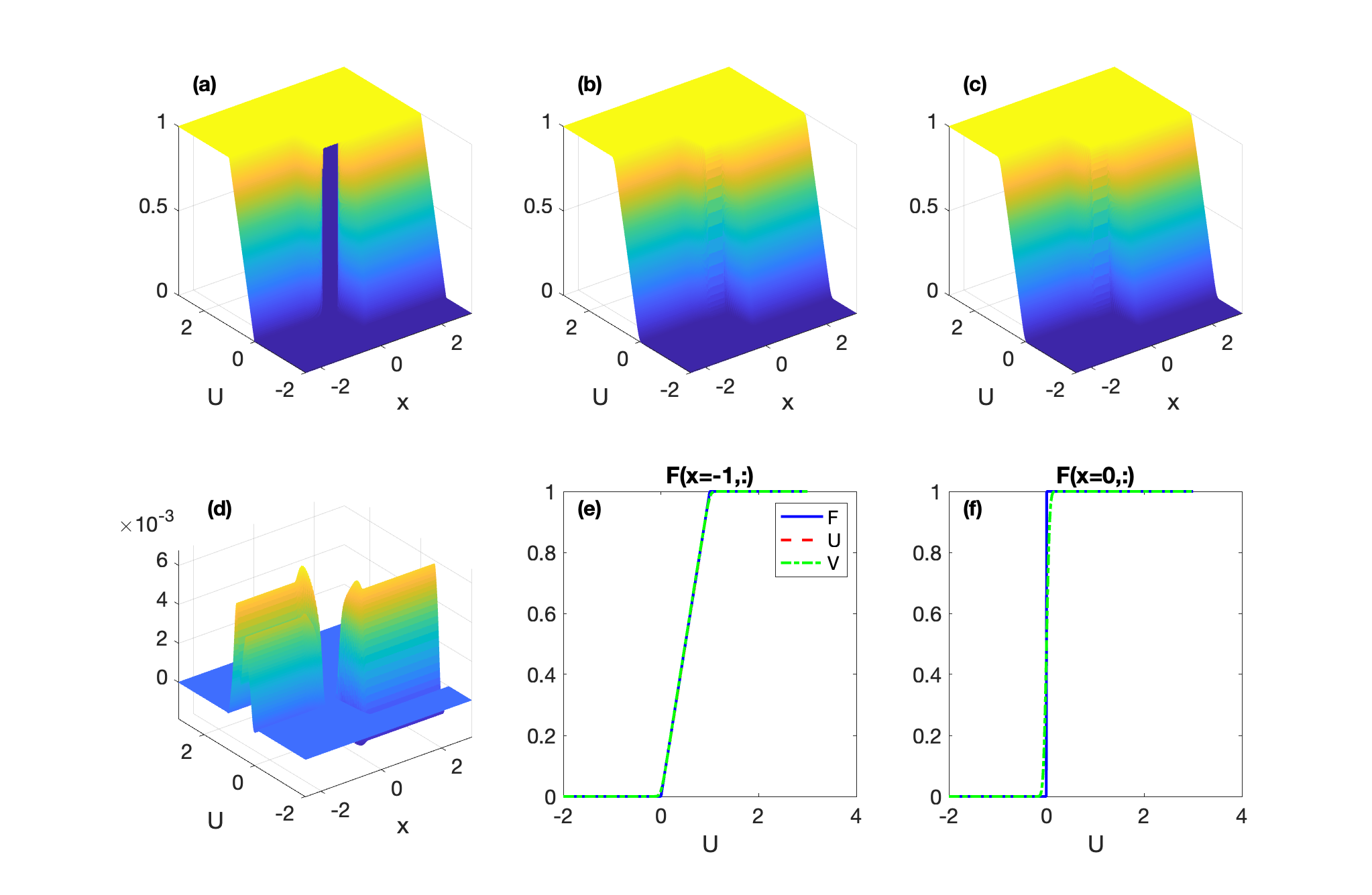}
\caption{Plots (a)–(c) show the results of solving \eqref{eqn:CDF_noshock} for $F$ and of performing direct Monte Carlo simulations of \eqref{eqn:u-rare} and \eqref{eqn:v-rare}, respectively. Plot (d) displays the difference between the solutions of \eqref{eqn:u-rare} and \eqref{eqn:v-rare}. Although the 3D plot may visually suggest a significant discrepancy (based on color) between plots (a) and (b) or (c), the actual $L^1$ error between (a) and (b) is only 0.0468. To further illustrate this point, plots (e) and (f) show the CDFs at slices $x = -1$ and $x = 0$, respectively, with the direct solution of $F$ (solid blue), $u$ (dashed red), and $v$ (dashed green).}
\label{fig:shock1}
\end{figure}

\begin{figure}[!h]
\includegraphics[width= 0.8\textwidth, height= 0.3\textwidth]{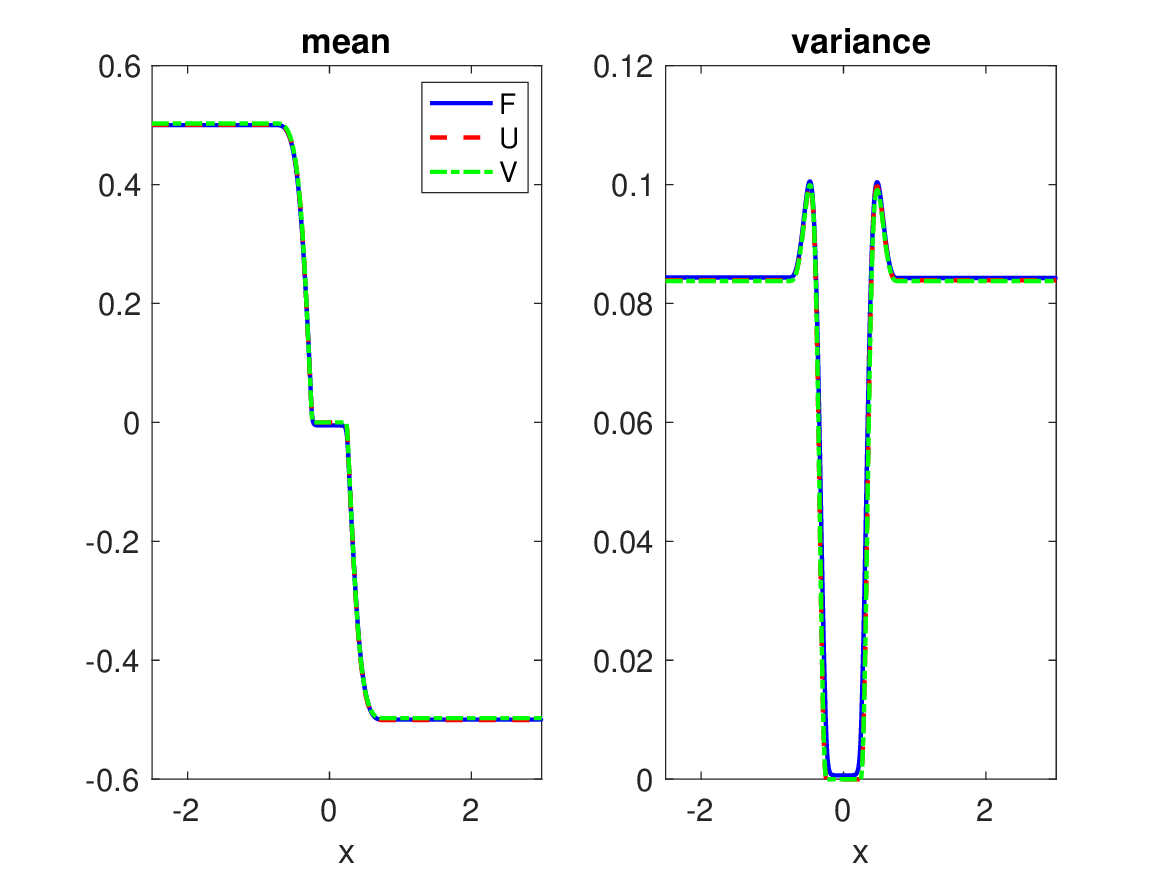}
\caption{Same settings as Fig.\ \ref{fig:shock1},  comparison of mean (left) and variance (right):  $u$ \eqref{eqn:u-shock} (dashed red), $v$ \eqref{eqn:v-shock} (dashed green) and the direct evolution of the \revO{CDF via \eqref{eqn:CDF_noshock} (solid blue)}.}
\label{fig:shock2}
\end{figure}

\begin{figure}[!h]
\includegraphics[width=0.9\textwidth, height=0.3\textwidth]{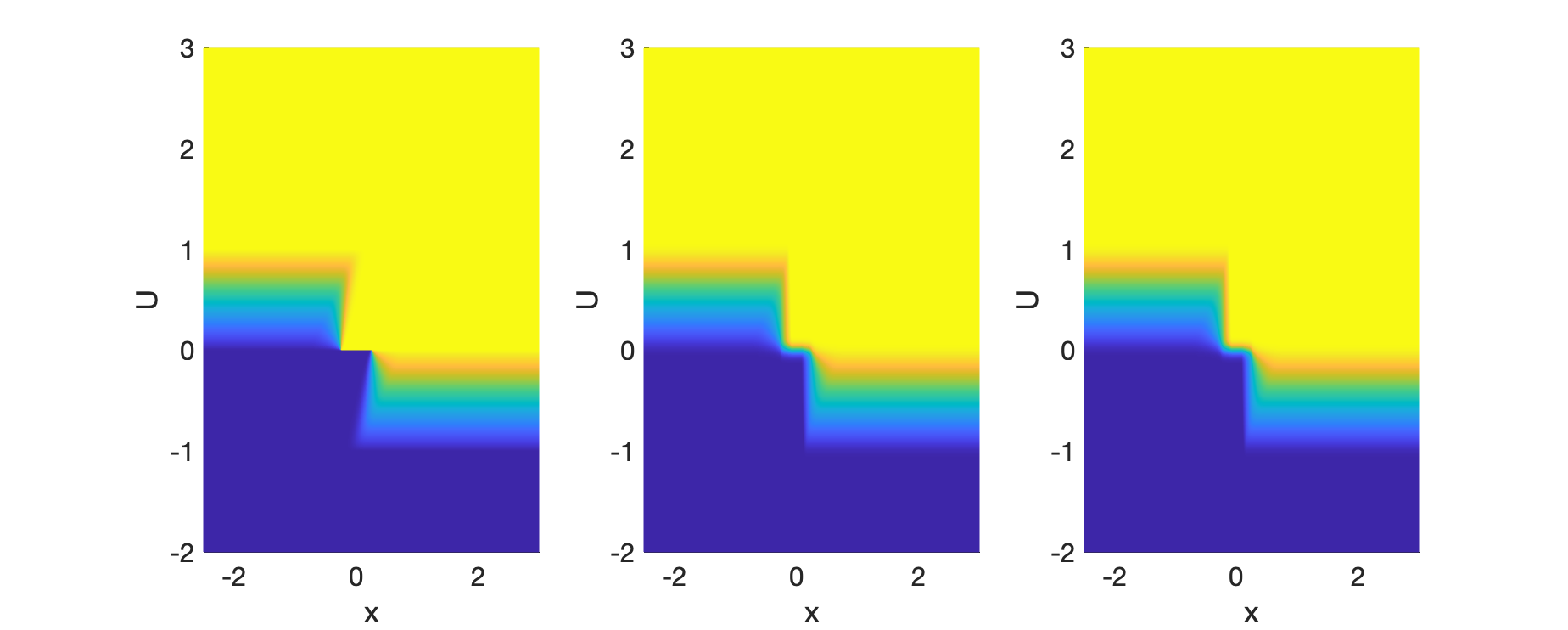}
\caption{Settings of Sec.\ \ref{sec:ShockExample} at $t = 0.8$. Comparison of the CDFs obtained from the directly evolved CDF \eqref{eqn:CDF_noshock} (left), $u$  \eqref{eqn:u-shock} (middle) and $v$  \eqref{eqn:v-shock} (right).}
\label{fig:t8_1}
\end{figure}

\begin{figure}[!h]
\includegraphics[width= 0.85\textwidth, height= 0.3\textwidth]{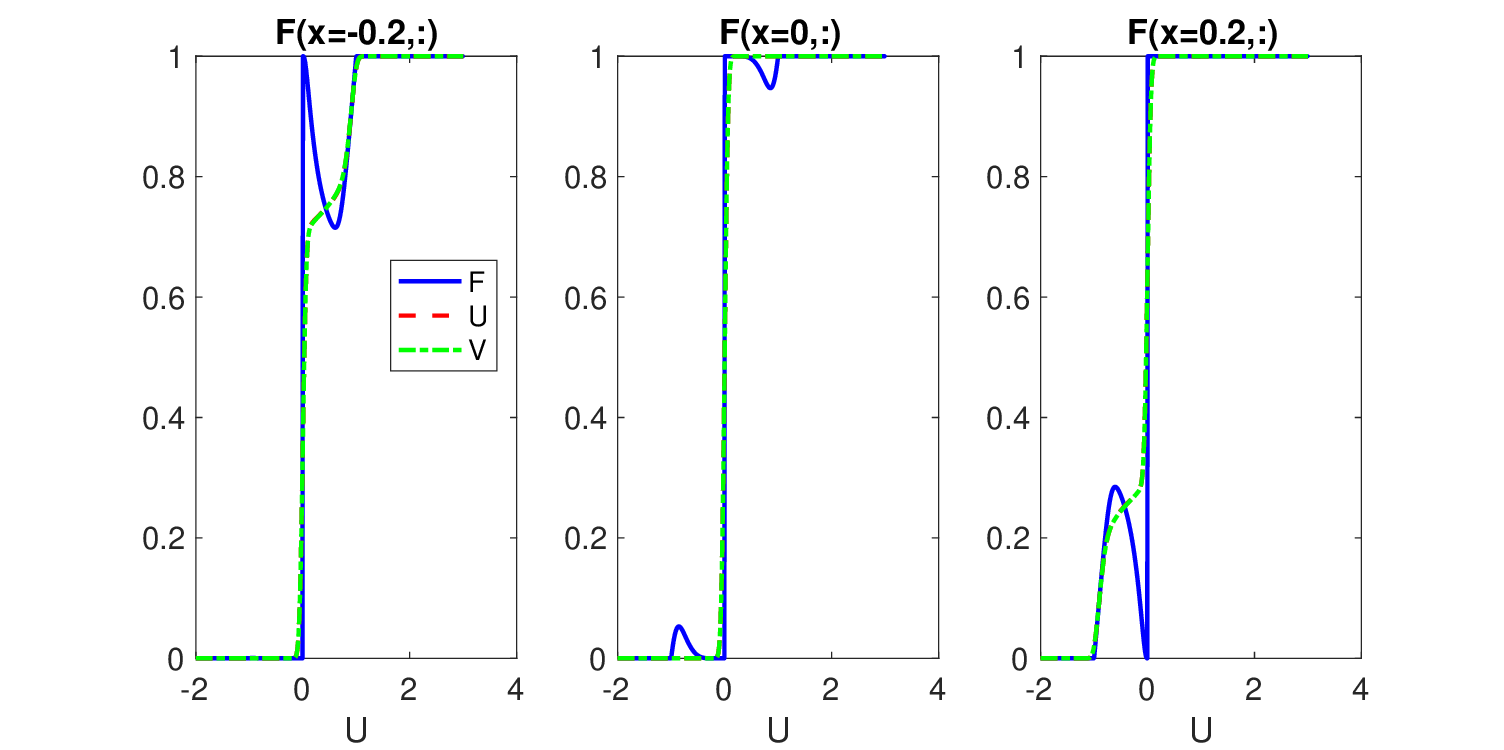}
\caption{Settings of Sec.\ \ref{sec:ShockExample} at $t = 0.8$. Cross-sections of Fig.~\ref{fig:t8_1} at three different locations $x=-0.2$, $x=0$ and $x=0.2$, with direct solution of $F$ (solid blue), $u$ (dashed red), and $v$ (dashed green).}
\label{fig:t8_3}
\end{figure}

\begin{figure}[!h]
\includegraphics[width= 0.7\textwidth, height= 0.35\textwidth]{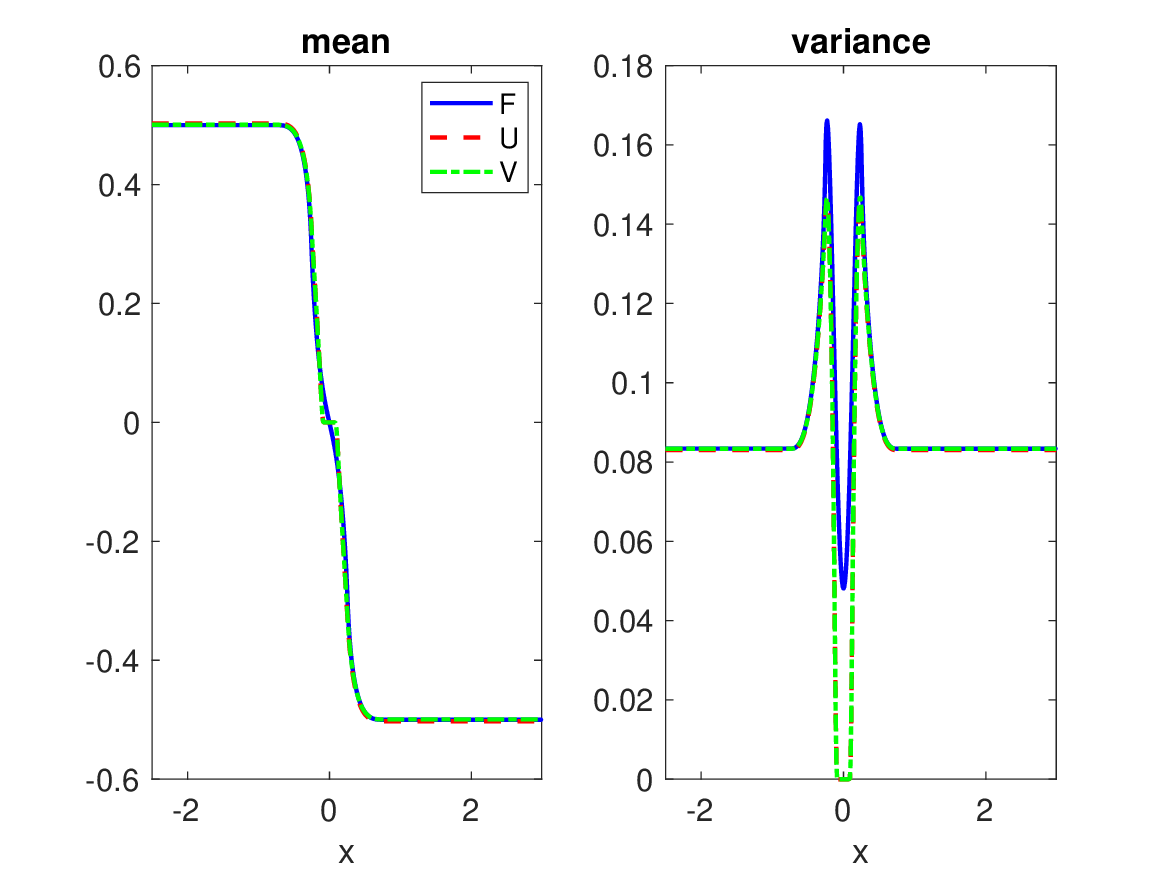}
\caption{Same settings and legend as Fig.\ \ref{fig:t8_3}. Comparison of mean and variance.}
\label{fig:t8_4}
\end{figure}

\begin{figure}[!h]
\includegraphics[width=\textwidth, height=0.3\textwidth]{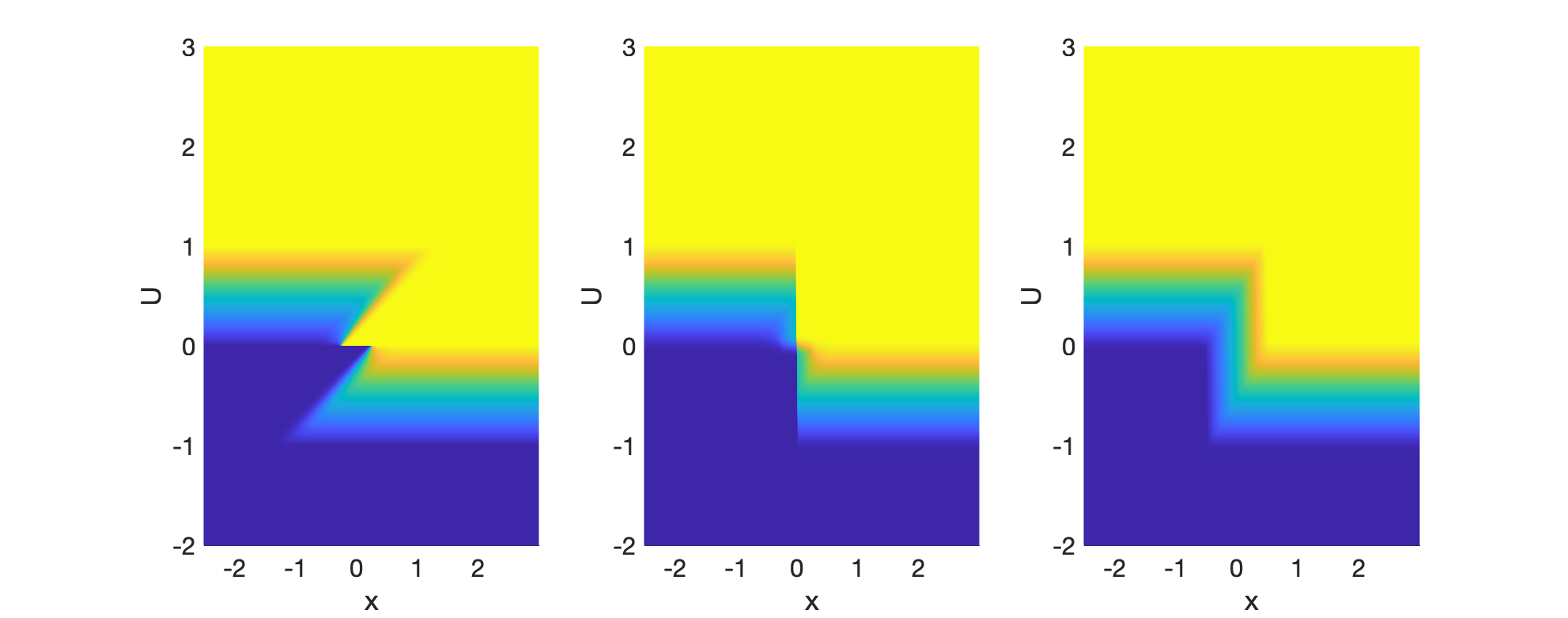}
\caption{Settings of Sec.\ \ref{sec:ShockExample} at $t = 2.0$. Comparison of CDF obtained from new model \eqref{eqn:CDF_noshock} (left), $u$  \eqref{eqn:u-shock} (middle), and $v$  \eqref{eqn:v-shock} (right).}
\label{fig:shock_t2_1}
\end{figure}

\begin{figure}[!h]
\includegraphics[width=\textwidth, height=0.3\textwidth]{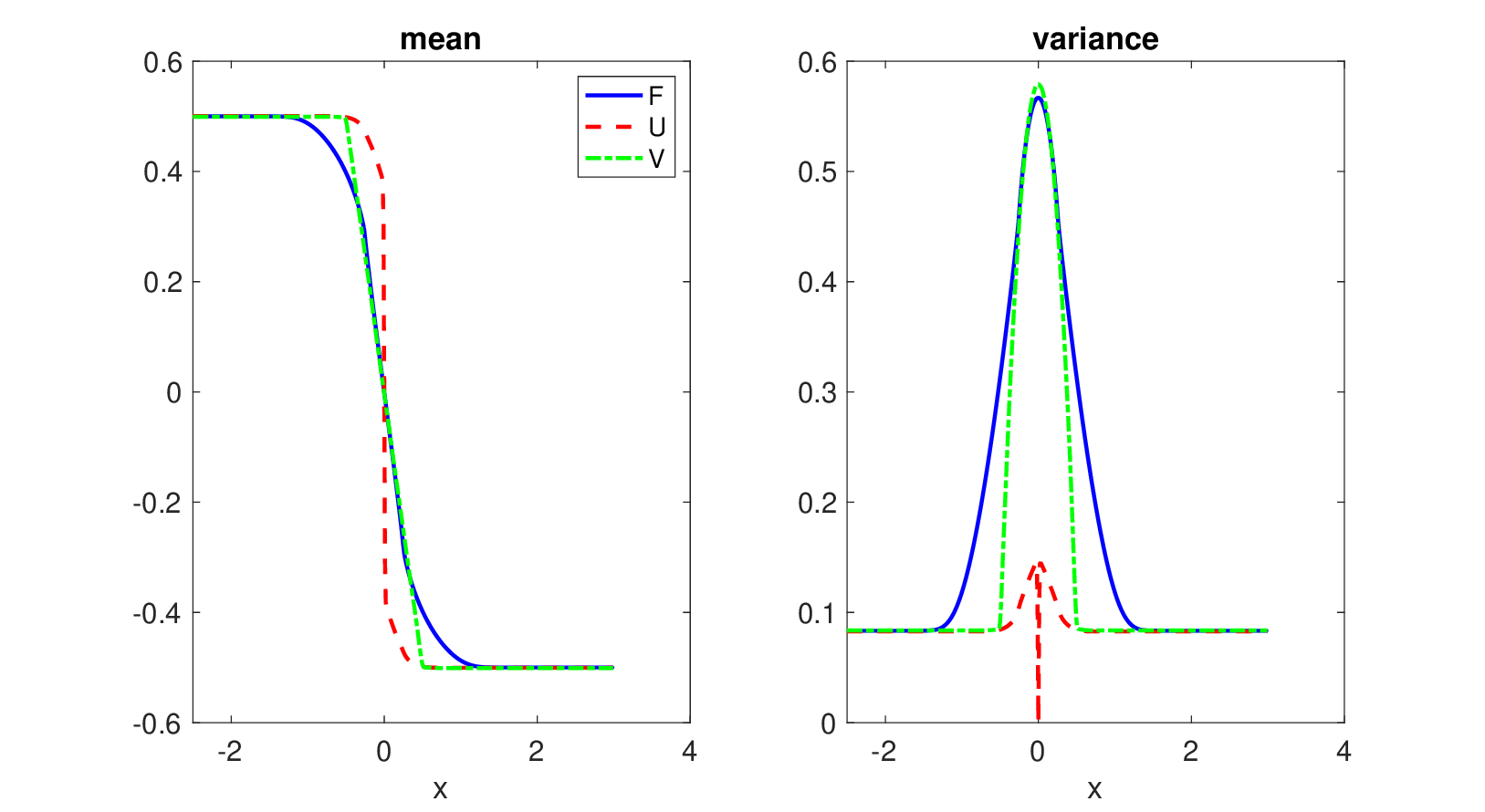}
\caption{Settings of Sec.\ \ref{sec:ShockExample} at $t = 2.0$. Comparison of mean and variance for the directly solved $F$ (solid blue), $u$ (dashed red), and $v$ (dashed green).}
\label{fig:shock_t2_2}
\end{figure}

\bigskip
\paragraph{\bf Acknowledgement:} L.\ Wang would like to thank Professors \ Shi Jin and Daniel Tartakovsky for fruitful discussions. The work 
of A.\ Chertock was supported in part by NSF grant DMS-2208438. The work of A.\ Sagiv was supported in part by the NSF grant DMS-2508811 and Binational Science 
Foundation Research Grant \#2022254. The work of L. Wang is partially supported by NSF grant DMS-1846854, DMS-2513336, and the Simons Fellowship.

\appendix
\section{Numerical scheme for the nonlocal equation \eqref{eqn:PDF_PDE_nl}} \label{sec:appendix}
First, reformulate \eqref{eqn:PDF_PDE_nl} into a conservative form in $x$ and then adopt the central upwind scheme developed in \cite{KNP01}. More precisely, rewrite \eqref{eqn:PDF_PDE_nl} into 
\begin{equation} \label{eqn:f00}
\partial_t f  + \partial_x \left[  a''(U) \int_{-\infty}^U f(t,x,v) \rd v + a'(U) f(U)   \right] = 0\,.
\end{equation} 
Let 
\[
F(f) = \frac{\partial}{\partial U} \left[  a'(U) \int_{-\infty}^U f(t,x,v) \rd v \right] \,,
\]
denote the flux, then \eqref{eqn:f00} reduces to 
\begin{equation} \label{eqn:f001}
\partial_t f + \partial_x F(f) =0\,,
\end{equation}
which can be considered as a conservation law for $f$. In this regard, many shock-capturing schemes can be employed. We select the central upwind scheme for its simplicity and high-order accuracy. 

Choose the computational domain $x_L \leq x_R$, $U_L \leq U \leq U_R$, and $t\leq 0$ and denote
\[
f_{jk}^n = f(t_n, x_j, U_k), \quad 0\leq n \leq N_t, ~ 0\leq j \leq N_x,  ~ 0\leq k \leq N_U\,\,,
\]
where $t_n = n\Delta t$, $x_j = x_L + j \Delta x $, $U_k = U_L + k \Delta U$. Here $\Delta t$, $\Delta x$ and $\Delta U$ are time step, mesh size in $x$ and $U$, respectively.
\begin{remark*}
  \revO{We note that \eqref{eqn:PDF_PDE_nl} itself does not require any a priori
knowledge of the range of $U$, since the integral is defined over $
(-\infty,U)$ for every $U \in \RR$. In numerical computations, however,
one must choose a finite computational domain $[U_L,U_R]$. This can be
consistently estimated from the range of the initial data: values of $U$ which
are not attained at $t=0$ (or attained with probability zero) will not be attaind for
all times.}
\end{remark*}
Then, \eqref{eqn:f001} can be discretized as 
\begin{equation} \label{sch:centralupwind}
\frac{f_{jk}^{n+1} - f_{jk}^n }{\Delta t} + \frac{ H_{j+1/2, k} - H_{j-1/2,k} }{\Delta x} =0\,,
\end{equation}
where
\[
H_{j+1/2,k} = \frac{a_\jhalf ^+ F(f^-_{\jhalf,\revO{k}}) - a_\jhalf^- F(f^+_{\jhalf, \revO{k}})}{a_\jhalf^+ - a_\jhalf^-} 
+ \frac{a_\jhalf^+ a_\jhalf^-}{a_\jhalf^+ - a_\jhalf^-} \left( f^+_{\jhalf, \revO{k}} - f^-_{\jhalf, \revO{k}} \right)\,,
\]
and 
\[
a_\jhalf^+ = \max \left\{   \left( \frac{\partial F}{\partial f} \right)_{\jhalf, \revO{k}}^n, 0 \right\}, \quad 
a_\jhalf^- = \min \left\{   \left( \frac{\partial F}{\partial f} \right)_{\jhalf, \revO{k}}^n, 0 \right\}\,.
\]
Here $f_\jhalf^\pm$ are constructed using the minmod slope limiter. That is,
\[
f_{\jhalf, \revO{k}}^- = f_{j,\revO{k}} + \frac{\Delta x}{2} (f_x)_{j, \revO{k}}, \qquad f_{\jhalf, \revO{k}}^+ = f_{j+1, \revO{k}} - \frac{\Delta x}{2} (f_x)_{j+1, \revO{k}}\,,
\]
with
\[
(f_x)_{j, \revO{k}} = \text{minmod} \left(  \theta \frac{f_{j, \revO{k}} - f_{j-1,\revO{k}}}{\Delta x}, ~ \frac{f_{j+1, \revO{k}}-f_{j-1, \revO{k}}}{2\Delta x} , ~ \theta \frac{f_{j+1, \revO{k}} - f_{j, \revO{k}}}{\Delta x}  \right)\,,
\]
and
\begin{equation*}
\text{minmod}(z_1, z_2, \cdots ) = \left\{ \begin{array}{cc} \min_j \{ z_j\}  & z_j >0 \quad \forall j\,,
\\ \max_j \{z_j\}  & z_j < 0 \quad \forall j \,, \\ 0 & \text{otherwise} \,. \end{array} \right. 
\end{equation*}
Note that without slope limiter, i.e., $(f_x)_{j, \revO{k}} \equiv 0$ for any $j$, and $a_\jhalf$ keeps its sign, then the scheme \eqref{sch:centralupwind} reduces to the upwind scheme.

\bibliography{UQ_new_ref}
\bibliographystyle{siam}

\end{document}